\newtheorem{theorem}{Theorem}
\newtheorem{lemma}[theorem]{Lemma}
\newtheorem{proposition}[theorem]{Proposition}
\newtheorem{example}[theorem]{Example}
\newtheorem{conjecture}[theorem]{Conjecture}
\newtheorem{corollary}[theorem]{Corollary}
\newtheorem{definition}[theorem]{Definition}
\newtheorem{property}[theorem]{Property}
\theoremstyle{remark}
\newtheorem{remark}[theorem]{Remark}
\newtheorem*{acknow}{Acknowledgments}
\def \CC {{\mathcal C}}
\def\kbnd {\mathfrak p}
\def\s {\mathfrak s}
\def\core {\mathfrak c}
\def \core {{\mathfrak c}}
\def\gg {\gamma}
\def\aa {\alpha}
\def\shape{ {\rm {shape}}}
\def\weight{ {\rm {weight}}}
\def \dit#1{\text{\it ``#1"}}
\edef\savecatcodeat{\the\catcode`@}
\def\tb@ifSpecChars#1#2{#1}
\def\tb@ifNoSpecChars#1#2{#2}
\def\tableau{%
  \bgroup
  \@ifstar{\let\Tif\tb@ifNoSpecChars\tb@tableauB}
          {\let\Tif\tb@ifSpecChars\tb@tableauB}}
\def\tb@tableauB{
  \@ifnextchar[{\tb@tableauC}{\tb@tableauC[]}}
\def\tb@tableauC[#1]{\hbox\bgroup%
    \let\\=\cr
    \def\bl{\global\let\tbcellF\tb@cellNF}%
    \def\tf{\global\let\tbcellF\tb@cellH}
%
    \dimen2=\ht\strutbox \advance\dimen2 by\dp\strutbox%
    \ifx\baselinestretch\undefined\relax%
    \else%
       \dimen0=100sp \dimen0=\baselinestretch\dimen0%
       \dimen2=100\dimen2 \divide\dimen2 by\dimen0%
    \fi%
    \let\tpos\tb@vcenter
    \tb@initYoung
    \tb@options#1\eoo
    \let\arrow\tb@arrow%
    \dimen0=\Tscale\dimen2%
    \dimen1=\dimen0 \advance\dimen1 by \tb@fframe%
    \lineskip=0pt\baselineskip=0pt
%
    \def\tb@nothing{}%
    \def\endcellno{$\rss\egroup\bss\egroup}
    \def\endcell{\endcellno\kern-\dimen0}
    \def\begincell{\vbox to\dimen0\bgroup\vss\hbox to\dimen0\bgroup\hss$}%
    \let\overlay\tb@overlay%
    \let\fl\tb@fl%
    \let\lss\hss\let\rss\hss\let\tss\vss\let\bss\vss
    \def\mkcell##1{
        \let\tbcellF\tb@cellD
        \def\tb@cellarg{##1}
        \ifx\tb@cellarg\tb@nothing\let\tb@cellarg\tb@cellE\fi%
        \begincell\tb@cellarg\endcellno
        \tbcellF}
    \let\savecellF\tbcellF
     \Tif{\catcode`,=4\catcode`|=\active}{}\tb@tableauD}%
\let\tb@savetableauD\tableauD
\gdef\tableauD#1{%
  \Tif{
    \mathcode`|="8000 \mathcode`*="8000%
    \mathcode`~="8000 \mathcode`@="8000%
    \def@{\bullet}%
    \let|\cr
    \let*\tf
    \let~\sk
  }{}%
  \tpos{\tabskip=0pt\halign{&\mkcell{##}\cr#1\crcr}}%
  \global\let\tbcellF\savecellF
  \egroup
  \egroup}
\let\tb@tableauD\tableauD
\let\tableauD\tb@savetableauD
\let\tb@savetableauD\undefined
\def\tb@options#1{\ifx#1\eoo\relax\else\tb@option#1\expandafter\tb@options\fi}
\def\tb@option#1{%
  \if#1t\let\tpos\tb@vtop\fi
  \if#1c\let\tpos\tb@vcenter\fi
  \if#1b\let\tpos\vbox\fi
  \if#1F\tb@initFerrers\fi
  \if#1Y\tb@initYoung\fi
  \if#1s\tb@initSmall\fi
  \if#1m\tb@initMedium\fi
  \if#1l\tb@initLarge\fi
  \if#1p\tb@initPartition\fi
  \if#1a\tb@initArrow\fi
}
\def\tb@vcenter#1{\ifmmode\vcenter{#1}\else$\vcenter{#1}$\fi}
\def\tb@vtop#1{\hbox{\raise\ht\strutbox\hbox{\lower\dimen0\vtop{#1}}}}
\def\tb@initPartition{\def\Tscale{.3}}
\def\tb@initSmall{\def\Tscale{1}}
\def\tb@initMedium{\def\Tscale{2}}
\def\tb@initLarge{\def\Tscale{3}}
\def\tb@initArrow{\dimen2=1.25em}
\def\tb@initYoung{%
  \def\tb@cellE{}
  \let\tb@cellD\tb@cellN
  \def\sk{\global\let\tbcellF\tb@cellNF}}
\def\tb@initFerrers{%
  \def\tb@cellE{\bullet}
  \let\tb@cellD\tb@cellNF
  \def\sk{\bullet}}
\def\tb@sframe#1{%
  \vbox to0pt{
    \vss
    \hbox to0pt{%
      \hss
      \vbox to\dimen1{
        \hrule depth #1 height0pt
        \vss
        \hbox to\dimen1{
          \vrule width #1 height\dimen1
          \hss
          \vrule width #1
          }%
        \vss
        \hrule height #1 depth 0in
        }%
      \kern-\tb@hframe
      }%
    \kern-\tb@hframe}}
\def\tb@hframe{.2pt}\def\tb@fframe{.4pt}\def\tb@bframe{2pt}
\def\tb@cellH{\tb@sframe{\tb@bframe}}       
\def\tb@cellNF{}                            
\def\tb@cellN{\tb@sframe{\tb@fframe}}       
\let\tbcellF\tb@cellN                       
\def\tb@rpad{1pt}
\def\tb@lpad{1pt}
\def\tb@tpad{1.8pt}
\def\tb@bpad{1.8pt}
\def\tb@overlay{\endcell\@ifnextchar[{\tb@overlaya}{\begincell}}
\def\tb@overlaya[#1]{\vbox to\dimen0\bgroup%
  \tb@overlayoptions#1\eoo%
  \tss\hbox to\dimen0\bgroup\lss$}
\def\tb@overlayoptions#1{\ifx#1\eoo\relax\else\tb@overlayoption#1\expandafter\tb@overlayoptions\fi}
\def\tb@overlayoption#1{
  \if#1t\def\tss{\vskip\tb@tpad}\let\bss\vss\fi
  \if#1c\let\tss\vss\let\bss\vss\fi
  \if#1b\def\bss{\vskip\tb@bpad}\let\tss\vss\fi
  \if#1l\def\lss{\hskip\tb@lpad}\let\rss\hss\fi
  \if#1m\let\lss\hss\let\rss\hss\fi
  \if#1r\def\rss{\hskip\tb@rpad}\let\lss\hss\fi
}
\def\tb@fl{\endcell\begincell\vrule depth 0pt width \dimen0 height \dimen0 \endcell\begincell}
\def\tb@arrowpad{.5}
\newoptcommand{\tb@arrow}{\@ne}[2]{%
  \endcell
   \begingroup%
   \let\dg@getnodesize\tb@getnodesize
   \dg@USERSIZE=#1\relax%
   \ifnum\dg@USERSIZE<\@ne \dg@USERSIZE=\@ne \fi%
   \dg@parse{#2}%
   \dg@label{\tb@draw{#1}{#2}}}
\def\tb@getnodesize#1#2#3#4#5{\dimen3=\tb@arrowpad\dimen2 #4=\dimen3 #5=\dimen3\relax}
\def\tb@getnodesize#1#2#3#4#5{\ifnum#2=0\ifnum#3=0\tb@getnodesizetail{#4}{#5}\else\tb@getnodesizehead{#4}{#5}\fi\else\tb@getnodesizehead{#4}{#5}\fi}
\def\tb@getnodesizetail#1#2{\dimen3=.5\dimen2 #1=\dimen3 #2=\dimen3}
\def\tb@getnodesizehead#1#2{\dimen3=.5\dimen2 #1=\dimen3 #2=\dimen3}
\def\tb@draw#1#2#3#4{%
        \dg@X=0\dg@Y=0\dg@XGRID=1\dg@YGRID=1\unitlength=.001\dimen0%
        \dg@LBLOFF=\dgLABELOFFSET \divide\dg@LBLOFF\unitlength%
        \dg@drawcalc
        \begincell
        \let\lams@arrow\tb@lams@arrow
        \begin{picture}(0,0)\begingroup\dg@draw{#1}{#2}{#3}{#4}\end{picture}%
        \endcell
        \endgroup
        \begincell}
\def\tb@lams@arrow#1#2{%
 \lams@firstx\z@\lams@firsty\z@
 \lams@lastx#1\relax\lams@lasty#2\relax
 \lams@center\z@
 %
 \N@false\E@false\H@false\V@false
 \ifdim\lams@lastx>\z@\E@true\fi
 \ifdim\lams@lastx=\z@\V@true\fi
 \ifdim\lams@lasty>\z@\N@true\fi
 \ifdim\lams@lasty=\z@\H@true\fi
 \NESW@false
 \ifN@\ifE@\NESW@true\fi\else\ifE@\else\NESW@true\fi\fi
 %
 \ifH@\else\ifV@\else
  \lams@slope
  \ifnum\lams@tani>\lams@tanii
   \lams@ht\ten@\p@\lams@wd\ten@\p@
   \multiply\lams@wd\lams@tanii\divide\lams@wd\lams@tani
  \else
   \lams@wd\ten@\p@\lams@ht\ten@\p@
   \divide\lams@ht\lams@tanii\multiply\lams@ht\lams@tani
  \fi
 \fi\fi
 %
 \ifH@  \lams@harrow
 \else\ifV@ \lams@varrow
 \else \lams@darrow
 \fi\fi
}
\let\savecatcodeat\undefined
\begin{document}

\title[
Combinatorics of the $K$-theory of affine Grassmannians]
{Combinatorics of the $K$-theory of affine Grassmannians}

\author{Jennifer Morse}
\thanks{Research supported in part by NSF grant \#DMS-0652668}
\address{Department of Mathematics, Drexel University, 
Philadelphia, PA 19104}
\email{morsej@math.drexel.edu}

\begin{abstract}
We introduce a family of tableaux that simultaneously generalizes
the tableaux used to characterize Grothendieck polynomials
and $k$-Schur functions.  We prove that the polynomials drawn from
these tableaux are the affine Grothendieck polynomials and $k$-$K$-Schur
functions -- Schubert representatives for the $K$-theory of affine 
Grassmannians and their dual in the nil Hecke ring.
We prove a number of combinatorial properties including Pieri rules.
\end{abstract}

\subjclass{Primary 05E05, 05E10; Secondary 14N35, 17B65}

\maketitle

\section{Introduction and background}

The Schur functions form a fundamental basis for the symmetric 
function space $\Lambda$.  Many problems in geometry and 
representation theory have been solved using the combinatorics 
behind Schur functions.  Natural combinatorics associated to the 
more general families of Grothendieck polynomials and $k$-Schur functions
has similarly led to an understanding of geometric and representation 
theoretic questions.   Here we explore the underlying combinatorics of 
two families of affine Grothendieck polynomials.

\subsection{Schur functions}

To begin,
the Schur role in geometry is nicely illustrated by the problem of calculating 
intersections of Schubert varieties.  
The cohomology ring of the Grassmannian $Gr_{\ell n}$ 
has a basis of Schubert classes $\sigma_\lambda$,
indexed by partitions $\lambda\in \mathcal P^{\ell n}$ 
that fit inside an $\ell \times (n-\ell)$ rectangle.  The structure 
constants of $H^*(Gr_{\ell n})$ in this basis:
\begin{equation}
\label{struc}
\sigma_\lambda\cup\sigma_\mu = \sum_{\nu\in\mathcal P^{\ell n}}
c_{\lambda\mu}^\nu \sigma_\nu
\,,
\end{equation}
are the number of points in the intersection of
the Schubert varieties $X_\lambda\cap X_\mu\cap X_{\nu^\perp}$.

\smallskip

There is an isomorphism,
$H^*(Gr_{\ell n})\cong \Lambda/\langle e_{n-\ell+1},\ldots,e_n\rangle$,
where the Schur function $s_\lambda$ maps to the Schubert class
$\sigma_\lambda$ when $\lambda\in\mathcal P^{\ell n}$ and to 
zero otherwise.  Thus,
the structure constants $c_{\lambda\mu}^\nu$ of \eqref{struc} 
are none other than {\it Littlewood-Richardson coefficients} in
the expansion
\begin{equation}
\label{litric}
s_\nu\, s_\mu = 
\sum_{\lambda}
c_{\nu\mu}^\lambda s_\lambda
\,,
\end{equation}
and the Schur functions are representatives of the Schubert classes.

\smallskip

The beauty of this identification is that the geometry can be
studied in parallel to Schur theory, where elegant solutions are 
given by way of combinatorics.  For example, the Schur functions 
can be defined explicitly as the weight generating function of 
semi-standard tableaux:
\begin{equation}
\label{schur}
s_{\lambda}= \sum_{T~semi-standard\atop\shape(T)=\lambda} x^{\weight(T)} \, .
\end{equation}
The ring structure can be determined by the {\it Pieri rule},
\begin{equation}
\label{pieri}
s_\ell \, s_\mu= \sum_{\lambda=\mu+horizontal~\ell-strip} s_\lambda
\,,
\end{equation}
which is
a simple matter of adding $\ell$ boxes to the diagram of $\mu$,
and the Littlewood-Richardson coefficients can be characterized
in terms of skew {yamanouchi} tableaux.
Other properties of Schur functions (and thus the Schubert classes)
also amount to simple combinatorial operations such as
\begin{equation} \label{invo}
\omega s_\lambda = s_{\lambda'}\,,
\end{equation}
where $\omega$ is the algebra automorphism defined by 
sending $e_\ell$ to  $h_\ell$.

\subsection{Grothendieck polynomials}
\label{introgrot}

Lascoux and Sch\"utzenberger introduced the Grothendieck 
polynomials in \cite{LSGrot} as representatives for the 
$K$-theory classes determined by structure sheaves of Schubert 
varieties.  Grothendieck polynomials are connected to 
combinatorics, representation theory, and algebraic 
geometry in a way that mimics ties between these theories and
Schur functions (e.g. \cite{Dem,KKGrot,LGrot,FK}).  
This study leads to a generalization of Schubert
calculus and combinatorics is again at the forefront.
For example, the stable Grothendieck polynomials $G_\lambda$ are 
inhomogeneous symmetric polynomials 
whose lowest homogeneous component is a Schur function.
They are characterized by Buch as the 
weight generating function:
\begin{equation}
\label{defgroth}
G_\lambda = \sum_{T~set-valued\atop \shape(T)=\lambda} 
(-1)^{|\lambda|-|\weight(T)|}\,x^T \,,
\end{equation}
where {\it set-valued tableaux} contain the
semi-standard tableaux as a subset.
The Pieri rules are in terms of binomial numbers
\cite{[Lenart]}  and there is a natural 
generalization for yamanouchi tableaux 
\cite{Buch} that gives a combinatorial rule for the structure
constants.

\smallskip

Contrary to Schur functions, Grothendieck polynomials are not
self-dual with respect to the Hall-inner product $\langle\, ,\rangle$.  
This gives rise to the family of polynomials $g_\lambda$, dual to $G_\mu$,
whose {\it top} homogeneous component is a Schur function.  Although less 
well-explored, the theory of these {\it dual Grothendieck polynomials}
is equally as interesting.

\subsection{$k$-Schur functions}

There is a generalization of Schur functions along other lines 
that arose circuitously in a study of Macdonald polynomials 
\cite{[LLM]}.  Pursuant work led to a new basis for
$\mathbb Z[h_1,\ldots,h_k]$ that satisfies properties analogous 
to \eqref{struc} -- \eqref{invo} in an affine setting.  In particular, 
the {\it $k$-Schur functions} $s_\lambda^{(k)}$ were introduced 
in \cite{[LMproofs]} and defined by inverting the system:
\begin{equation}
\label{kschurintro}
h_\lambda = \sum_{\mu} K^{(k)}_{\mu\lambda}\,
s_\mu^{(k)}
\,,
\end{equation}
where $K^{(k)}_{\mu\lambda}$ enumerate 
a family of 
tableaux in bijection with elements of 
the type-$A$ affine Weyl group called {\it $k$-tableaux}
\cite{[LMcore]}.

\smallskip

Geometrically, it was proven that the $k$-Schur functions
are fundamental to the structure of the quantum and affine
(co)homology of Grassmannians analogous to the Schur role in the 
usual cohomology.
Quantum cohomology originated in string theory and symplectic geometry
and is connected through the work of Konsevitch and Manin to the
Gromov-Witten invariants.  
It was shown in \cite{[LMhecke]} that certain Gromov-Witten invariants 
and calculation in the quantum cohomology of the Grassmannian can be 
reduced to computing the product of {$k$-Schur functions}.
It was then shown in \cite{Lam} that $k$-Schur functions 
are the Schubert basis for homology of the affine Grassmannian.  
Again, combinatorics behind $k$-Schur functions is key to their 
study, as well as to the  geometry.  Properties such as their
Pieri rule are proven in \cite{[LMproofs]}.

\smallskip

A second affine analog for Schur functions was introduced
in \cite{[LMhecke]}.  These {\it dual $k$-Schur functions} 
(or {\it affine Schur functions}) can be defined by
$\langle s_\lambda^{(k)},\mathfrak S_\mu^{(k)}\rangle = \delta_{\lambda\mu}$.
These also have significance in the geometry of the affine Grassmannian
and are studied by way of combinatorial identities such as 
their weight generating function
\begin{equation}
\mathfrak S_\lambda^{(k)} = \sum_{T~k-tableaux\atop 
\shape(T)=\core(\lambda)} x^{\weight(T)}
\end{equation}
and their Pieri rule (see \cite{[LLMS]}).

\subsection{Affine Grothendieck polynomials}

The extension of ideas in $k$-Schur theory to an
inhomogeneous setting underlies our investigation of affine 
combinatorics in the K-theoretic framework.  We present a family 
of {\it affine set-valued tableaux} (or {\it affine s-v tableaux}) 
that simultaneously generalizes those used to characterize Grothendieck 
polynomials {\it and} $k$-Schur functions.  We produce
a bijection between affine s-v tableaux and certain elements
that arise from the affine nil Hecke algebra.  From this, we prove that
the polynomials drawn from these tableaux:
\begin{equation}
G_\lambda^{(k)} = \sum_{T~affine~\text{s-v}~tableaux\atop
\shape(T)=\core(\lambda)}
\!  \!
\!  \!
(-1)^{|\lambda|+\weight(T)}\,x^{\weight(T)}
\,,
\end{equation}
are affine stable Grothendieck polynomials introduced in \cite{[Lam]}.
We also study their dual with respect to the Hall-inner product,
the $k$-$K$-Schur functions $g_\lambda^{(k)}$.

\smallskip

We prove that the affine s-v tableaux associated to integer $k>0$ 
contain $k$-tableaux as a subset and reduce to set-valued tableaux 
when $k$ is large.  As a consequence,
affine Grothendieck polynomials and $k$-$K$-Schur 
functions reduce to Grothendieck polynomials and their dual in a 
limiting case.  
Moreover, the term of lowest degree in $G_\lambda^{(k)}$ is 
the dual $k$-Schur function $\mathfrak S_\lambda^{(k)}$ and
the highest term of $g_\lambda^{(k)}$ is the $k$-Schur function
$s_\lambda^{(k)}$.

\smallskip

We also give a number of combinatorial properties for the $k$-$K$-Schur 
functions such as Pieri rules.
In particular, for $k$-bounded partition $\lambda$ and $r\leq k$,
\begin{equation}
\label{kpieriintro}
g^{(k)}_r\,g_\lambda^{(k)} = \sum_{(\mu,\rho)\in \mathcal H_{\lambda,r}^{k}}
(-1)^{r+|\lambda|-|\mu|}
\,g_\mu^{(k)}
\; \;
\text{and}
\end{equation} 
\begin{equation}
g_{1^r}^{(k)}\,g_\lambda^{(k)} = 
\sum_{(\mu,\rho)\in \mathcal E_{\lambda,r}^{k}} 
(-1)^{r+|\lambda|-|\mu|}\,
g_{\mu}^{(k)} \,,
\end{equation}
where the elements of $\mathcal H_{\lambda,r}^{k}$ and
$\mathcal E_{\lambda,r}^{k}$ are 
obtained by way of an affine set-valued notion of 
horizontal and vertical strips, respectively.
In addition to extending \eqref{schur} and \eqref{pieri}
to the affine $K$-theoretic setting, we find that $k$-$K$-Schur
functions satisfy a natural analog to \eqref{invo}.  The image of 
$g_\lambda^{(k)}$ under an involution $\Omega$ on $\Lambda$ is 
simply another $k$-$K$-Schur function:
\begin{equation}
\label{invoO}
\Omega g_\lambda^{(k)} = g_{\lambda^{\omega_k}}^{(k)}\,,
\end{equation}
where $\lambda^{\omega_k}$ is a certain unique
``$k$-conjugate" partition associated to $\lambda$.

\smallskip

Our results establish that the $k$-$K$-Schur functions and
affine Grothendieck polynomials are the affine $K$-theoretic 
Schur functions in a combinatorial sense.  Lam, Schilling, 
and Shimozono show in \cite{[LSS]} that these polynomials 
satisfy an analog geometrically along the lines of
\eqref{struc} and \eqref{litric}.
We thus have a combinatorial framework within which to study
this geometry as has been so fruitful in classical Schur
function theory.

\smallskip
\subsection{Related work}

This study grew out of an FRG problem solving session in Vi\~na del 
Mar, Chile (2008) during which Thomas Lam posed the problem of 
exploring polynomials to play the Schur role in an affine 
$K$-theoretic setting.  Our results, establishing that
the affine Grothendieck polynomials and $k$-$K$-Schur
functions are the appropriate candidate, are obtained purely 
combinatorially.  Recent work of Lam, Schilling, and
Shimozono \cite{[LSS]} carries out a similar investigation
from the geometric viewpoint and they prove that
$G_\lambda^{(k)}$ and $g_\lambda^{(k)}$
are Schubert representatives for K-theory classes of the affine Grassmannian
and their dual in the nil Hecke ring, respectively.

\smallskip

Another direction explores ties between
the theories of Grothendieck and Macdonald polynomials.
$k$-Schur functions are conjectured to be the $t=1$ specialization 
of atoms, a family of polynomials that arose in the study
of Macdonald polynomials \cite{[LLM]}.  
In a forthcoming paper with Jason Bandlow \cite{[BM]}, 
we conjecture that the Macdonald polynomials 
can be expanded positively (up to degree-alternating sign) in
terms of the $\{g_\lambda\}_{\lambda}$ basis.
We prove such an expansion in the Hall-Littlewood case using
a statistic on set-valued tableaux that naturally generalizes 
the Lascoux-Sch\"utzenberger charge \cite{LSfoulkes}.  
In addition, we explore a one parameter 
family of affine Grothendieck polynomials by
$t$-generalizing the methods of \cite{[LMproofs]} where $k$-Schur 
functions were defined so they could be connected to geometry.

\smallskip

\begin{acknow}
I would like to thank the NSF/FRG for their support and all
the members of the FRG: Jason Bandlow, Thomas Lam, Luc Lapointe, 
Huilan Li, Anne Schilling, Mark Shimozono, 
Nicolas Thiery, and Mike Zabrocki. The many discussions with 
Patrick Clarke were also extremely helpful.
\end{acknow}

\section{Definitions  }

\subsection{Partitions}
A partition is an integer sequence 
$\lambda=(\lambda_1\geq\dots\geq\lambda_m>0)$
whose degree is $|\lambda|=\lambda_1 +\cdots +\lambda_m$ and whose 
length $\ell(\lambda)$ is $m$.  Each partition $\lambda$ has an 
associated Ferrers shape with $\lambda_i$ lattice squares in the $i^{th}$ 
row, from the bottom to top.  
Given  a partition $\lambda$, its conjugate $\lambda'$ is the shape
obtained by reflecting  $\lambda$ about the diagonal.  A partition 
$\lambda$ is {\it $k$-bounded} if $\lambda_1 \leq k$ and $\mathcal P^k$
denotes the set of all such partitions.  Any lattice 
square in the shape is called a cell, where the cell $(i,j)$ 
is in the $i$th row and $j$th column of the shape. We say 
that $\lambda \subseteq \mu$ when $\lambda_i \leq \mu_i$ for
all $i$.  When $\rho \subseteq \gamma$, the skew shape $\gg/\rho$ 
is the set theoretic difference $\gg-\rho$.
Dominance order $\lambda\unrhd\mu$ 
on partitions is defined by 
$|\lambda|=|\mu|$ and $\lambda_1+\cdots+\lambda_i\geq
\mu_1+\cdots+\mu_i$ for all $i$. 

\smallskip

Given a partition $\gg$, 
a {\it $\gamma$-removable} corner is a cell $(i,j)\in \gg$ 
with $(i ,j+1),(i+1,j)\not\in \gg$ and a {\it $\gamma$-addable} 
corner is a square  $(i,j)\not\in \gg$ with $(i ,j-1),(i-1,j)\in \gg$.  
A cell $(i,j)\in \gg$ where $(i+1,j+1)\not\in \gg$ is called {\it extremal}.  
In particular, removable corners are extremal. 
In the skew shape $(5,3,3,2)/(1,1)$ below,
all addable corners are labeled by $a$, extremals labeled by $e$,
and removable corners are framed. 
\begin{equation}
{\tiny{\tableau[scY]{\bl,\bl {{a}} , \bl  ,\bl | 
\bl,e, \tf e, \bl {{a}} ,\bl |
\bl,,{{e}},\tf e,\bl {{}},\bl  , \bl , \bl |
\bl ,\bl,,{{e}},\bl {{a}},\bl ,\bl |
\bl,\bl,, {{e}}, e, \tf e,\bl {{a}}}}}
\end{equation}

\smallskip
The hook-length of a cell $c$ in a partition $\gg$
is the number of cells above and to the right of $c$,
including $c$ itself. $h(\gg)$ is the hook-length
of $c=(0,0)$.
A {\it $p$-core} is a 
partition that does not contain any cells with hook-length $p$.
Let $\CC^{p}$ denote the collection of $p$-cores.
The {\it $p$-residue} of square  $(i,j)$ is $j-i \mod p$.
That is, the integer in this square when squares are
periodically labeled with $0,1,\ldots,p-1$, and zeros lie on 
the main diagonal.  The 5-residues of the 5-core
$(6,4,3,1,1,1)$ are
$$
{\tiny{\tableau[scY]{\bl 4 |0|1|2,\bl 3|3,4,0,\bl 1|4,0,1,2,
\bl 3| 0,1,2,3,4,0,\bl 1}}}
$$
Hereafter we work with a fixed integer $k>0$ and all cores/residues 
are $k+1$-cores/$k+1$-residues.  For convenience, we
refer to a corner of residue $i$ as an {\it $i$-corner} and 
a cell $c$ of residue $i$ is denoted by $c(i)$.
The set of residues used to label cells of the 
shape $\gamma/\beta$ is denoted by $Res(\gamma/\beta)$.

\smallskip

Several basic properties of cores will be used throughout.
For example, Property~15 of \cite{[LMcore]} is particularly
useful in our study:
\begin{property}
\label{thecoreprop}
Given cell $c(i)$ in row $r$ of a $p$-core $\gamma$,
\begin{itemize}
\item
if $c$ lies at the end of its row then all extremals of $p$-residue $i$
in a row higher than $r$ lie at the end of their row
\item
if $c$ lies at the top of its column then all extremals of $p$-residue $i$
in a row lower than $r$ lie at the top of their column.
\end{itemize}
\end{property}
\noindent
Note then that a core never has both an addable and a removable corner 
of the same residue.  Further, given a core $\gamma$ and any addable 
$i$-corner, the shape obtained by adding {\it all} $i$-corners to
$\gamma$ is also core.  
A bijection $\kbnd$ from $k+1$-cores to  $k$-bounded partitions 
was defined in \cite{[LMcore]} by the map
$$
\kbnd (\gamma) = \lambda\,,
$$
where $\lambda$ is obtained by deleting all hooks larger
than $k$ from $\gamma$ and reading the rows of the resulting
skew shape.  We denote the inverse by $\core=\kbnd^{-1}$.
Note that $|\lambda|$ is the number of $k$-bounded hooks in $\core(\lambda)$.
\begin{remark}
\label{addonecore}
(Proposition 22 in \cite{[LMcore]})
If $\gamma$ is obtained by adding all addable
corners of some residue to a core $\beta$, then
$|\kbnd(\gamma)|=|\kbnd(\beta)|+1$.
\end{remark}

Our study also requires the use of compositions where the
length $\ell(\alpha)$ denotes the number of parts in $\alpha$.
We work often with {\it $k$-bounded compositions}; $\alpha$ 
where $\alpha_i\leq k$.

\subsection{Symmetric functions}
Let $\Lambda$ denote the ring of symmetric functions, generated by the
elementary symmetric functions $e_r=\sum_{i_1<\ldots <i_r}x_{i_1}
\cdots x_{i_r}$, or equivalently by the complete functions
$h_r=\sum_{i_1\leq\ldots\leq i_r}x_{i_1}\cdots x_{i_r}$.
The {\it Hall inner product} on $\Lambda$ is defined by
$$
\langle h_\lambda, m_\mu\rangle = \delta_{\lambda\mu}\,,
$$
where $m_\mu$ is a monomial symmetric function.
Complete details on symmetric functions can be found in 
e.g \cite{Macbook,Stanley,Lasnotes}.

\smallskip

{\it Schur functions} $s_\lambda$ are the orthonormal basis 
with respect to the Hall inner product.  They can be combinatorial 
defined using (semi-standard) {\it tableaux} -- the filling of a 
Ferrers shape with integers that strictly increase in columns 
and are not decreasing
in rows.  The weight of tableau $T$ is the composition 
$w(T)=\alpha$ 
where $\alpha_i$ is the multiplicity of $i$ in $T$.
Schur functions are the weight generating functions: 
\begin{equation}
s_\lambda = \sum_{\shape(T)=\lambda}
x^{w(T)}\,.
\end{equation}
Tableaux can equivalently be viewed as a 
sequence of shapes differing by horizontal strips where a 
{\it horizontal $r$-strip} is a skew shape with $r$ cells 
and whose columns have at most one cell.
To be precise, a semi-standard tableau of weight $\alpha$
and shape $\lambda$ is a sequence
$$
\emptyset\subseteq\lambda^{(1)}\subseteq\lambda^{(2)}\subseteq\cdots
\subseteq\lambda^{(\ell(\alpha)}=\lambda\,,
$$
where $\lambda^{(x)}/\lambda^{(x-1)}$ is an $\alpha_x$-strip,
for $x=1,\ldots,\ell(\alpha)$.

\subsection{Set-valued tableaux}

The weight generating function for {\it Grothendieck polynomials}
is given in \cite{Buch} using {\it set-valued tableaux} --
fillings of a Ferrers shape with sets of integers 
where a set $X$ below (west of) $Y$ satisfies $\max X<(\leq)\min Y$.
The weight is again the composition determined by the multiplicities 
of each letter.  For any partition $\lambda$, the
symmetric Grothendieck polynomial is
\begin{equation}
\label{defgrot}
G_\lambda\, =\, \sum_{set~valued~T\atop \shape(T)=\lambda} 
(-1)^{|w(T)|+|\lambda|}\,x^{T}
\,.
\end{equation}

A set-valued tableau where every cell contains a set of
cardinality one is a semi-standard tableau of the same
shape $\lambda$ and weight $\alpha$.
This occurs iff $|\alpha|=|\lambda|$
and otherwise $|\alpha|>|\lambda|$.
If $\mathcal K_{\lambda\alpha}$ enumerates the set-valued tableaux 
of shape $\lambda$ and weight $\alpha$,  then
\begin{equation}
\label{grothins}
G_\lambda 
\,=\, \sum_{|\mu|\geq |\lambda|} (-1)^{|\lambda|+|\mu|}
\,
\mathcal K_{\lambda\mu}\, m_\mu 
\,=\, s_\lambda + \text{terms of higher degree}\,.
\end{equation}

{\it Dual Grothendieck polynomials} $g_\lambda$
(e.g.  \cite{[Lenart],[SZ],[LP]}) can be defined by 
$$
\langle g_\lambda, G_\mu\rangle = \delta_{\lambda\mu}
\,.
$$
Duality and \eqref{grothins} imply that
\begin{equation}
\label{defdualg}
h_\mu = \sum_{|\lambda|\leq |\mu|}
(-1)^{|\lambda|+|\mu|}\,
\mathcal K_{\lambda\mu}\, g_\lambda
\,.
\end{equation}
In fact, since the transition matrix $||\mathcal K ||_{\lambda\mu}$ 
is unitriangular, the system obtained from this expression over all partitions 
can be inverted and used to characterize the $\{g_\lambda\}$.  
Inverting \eqref{defdualg} also implies that
\begin{equation}
\label{dualgins}
g_\lambda =  s_\lambda + \text{terms of lower degree}
\end{equation}
by the triangularity of $h_\mu$ in terms of Schur functions.

\subsection{$k$-tableaux}

Let $\gg$ be a $k+1$-core and
let $\aa=(\aa_1,\ldots,\aa_r)$ be a composition of $|\kbnd(\gg)|$.
A \dit{$k$-tableau} of shape $\gg$ and weight $\aa$ 
is  a semi-standard filling of $\gg$ with integers $1,2,\ldots,r$ such that
the collection of cells filled with letter $i$ are labeled by exactly 
$\alpha_i$ distinct $k+1$-residues.

\smallskip

\begin{example}
\label{exssktab}
The $3$-tableaux of weight $(1,3,1,2,1,1)$ and shape (8,5,2,1) are:
\begin{equation}
{\tiny{\tableau*[scY]{5\cr 4&6\cr2&3&4&4&6\cr 1&2&2&2&3&4&4&6 }}} \quad
{\tiny{\tableau*[scY]{6\cr 4&5\cr 2&3&4&4&5\cr 1&2&2&2&3&4&4&5 }}} \quad
{\tiny{\tableau*[scY]{4\cr 3&6\cr 2&4&4&5&6\cr 1&2&2&2&4&4&5&6 }}}
\end{equation}
\end{example}

\begin{remark}
\label{ktabtab}
When $k\geq h(\gamma)$, a $k$-tableau $T$ of shape $\gamma$ and weight 
$\alpha$ is a semi-standard tableau of weight $\mu$ since no two diagonals 
of $T$ can have the same residue. 
\end{remark}

The symmetric family of
{\it dual $k$-Schur functions} was introduced in \cite{[LMhecke]} 
and defined to be the weight generating function of $k$-tableaux:
for any $k$-bounded partition $\lambda$,
\begin{equation}
\label{dualkschur}
\mathfrak S_\lambda^{(k)} = \sum_{T~k-tab\atop \shape(T)=\core(\lambda)} 
x^{w(T)}
\,.
\end{equation}
It is shown in \cite{[LMcore]} that 
the number $K_{\mu\alpha}^{(k)}$ 
of $k$-tableaux of shape $\core(\mu)$ and weight $\aa$ 
satisfies the property 
\begin{equation}
\label{unikkostka}
K_{\mu\lambda}^{(k)}=0 \quad\text{when}\quad \mu \ntrianglerighteq
\lambda \qquad
\text{ and }\qquad K_{\mu\mu}^{(k)}=1\,,
\end{equation}
for any $\lambda,\mu\in\mathcal P^k$.
Therefore, the monomial expansion has the form
\begin{equation}
\label{dualksenm}
\mathfrak S_\lambda^{(k)} = 
m_\lambda +
\sum_{\mu\in\mathcal P^k
\atop \mu \lhd\lambda}
K_{\lambda\mu}^{(k)}\, m_\mu\,,
\end{equation}
revealing that
$\{\mathfrak S_\lambda^{(k)}\}_{\lambda\in\mathcal P^k}$
forms a basis for
$$
\Lambda/
\mathcal I^{k} \quad\text{where}\quad 
\mathcal I^k=\langle m_\lambda : \lambda_1>k\rangle\,.
$$

\smallskip

This space is natural paired with
$$
\Lambda^{(k)}
= \mathbb Z[h_1,h_2,\ldots,h_k]\,.
$$
Since $\langle h_i: i>k\rangle$ is dual to $\mathcal I^k$
with respect to the Hall inner product,
$\Lambda^{(k)}$ is dual to to $\Lambda/\mathcal I^k$.
The basis for $\Lambda^{(k)}$ that is dual to 
$\{\mathfrak S_\lambda^{(k)}\}_{\lambda\in\mathcal P^k}$
is made up of the {\it $k$-Schur functions} $s_\lambda^{(k)}$.
Since the matrix $||K^{(k)}||_{\lambda,\mu\in \mathcal P^k}$ is invertible, 
the system:
\begin{equation}
\label{kschurdef}
h_\lambda = s_\lambda^{(k)}+\sum_{\mu : \mu\rhd\lambda} K_{\mu\lambda}^{(k)}
s_\mu^{(k)}\,\quad\text{for all } \lambda_1\leq k
\end{equation}
can be taken as the definition of $k$-Schur functions \cite{[LMproofs]}.

\section{Affine set-valued tableaux}
\label{astab}

\subsection{Definition}

In this section, we introduce and derive properties for a family of 
tableaux that generalizes both $k$-tableaux and set-valued tableaux.
In subsequent sections, from these tableaux we will extract 
an inhomogeneous generalization of (dual) $k$-Schur functions 
{\it and} an affine analog of (dual) Grothendieck polynomials.

\smallskip

Let $T_{\leq x}$ denote the subtableau obtained by deleting all
letters larger than $x$ from $T$.
For example,
$$
T= {\footnotesize\tableau*[mcY]
{\{7\}\cr\tiny \{2,5\}&\{6\}\cr\{ 1\}&\{2,3\}&\{4\}&\{4,6\} }}
\qquad
T_{\leq 4} =
{\footnotesize\tableau*[mcY]{\{2\}\cr \{1\}&\{2,3\}&\{4\}&\{4\} }}
$$

\begin{definition}
\label{stdsetvalued}
A standard affine set-valued tableau $T$ of degree $n$
is a set-valued filling such that, for each $1\leq x\leq n$,
$\shape(T_{\leq x})$ is a core and there is an $x$ exactly in 
all removable corners of $T_{\leq x}$ with the same residue.
\end{definition}

\begin{example}
\label{exstdsvktab}
With $k=2$, the standard affine s-v tableaux of degree 5 with shape 
$\core(2,1,1)=(3,1,1)$ are
\begin{equation}
{\tiny
{{\tableau*[lcY]{\{3,4\}_1\cr\{2\}_2\cr 
\{1\}_0&\{3,4\}_1&\{5\}_2 }}} \quad
{{\tableau*[lcY]{\{3,5\}_1\cr\{2\}_2\cr 
\{1\}_0&\{3\}_1&\{4\}_2 }}} \quad
{{\tableau*[lcY]{\{5\}_1\cr\{4\}_2\cr 
\{1,2\}_0&\{3\}_1&\{4\}_2 }}} \quad
{{\tableau*[lcY]{\{4\}_1\cr\{3\}_2\cr 
\{1,2\}_0&\{4\}_1&\{5\}_2 }}} \quad
{{\tableau*[lcY]{\{4\}_1\cr\{2,3\}_2\cr 
\{1\}_0&\{4\}_1&\{5\}_2 }}}}
\end{equation}
\begin{equation}
\label{ssasvtab}
{\tiny
{{\tableau*[lcY]{\{5\}_1\cr\{4\}_2\cr 
\{1\}_0&\{2,3\}_1&\{4\}_2 }}} \quad
{{\tableau*[lcY]{\{5\}_1\cr\{3,4\}_2\cr 
\{1\}_0&\{2\}_1&\{3,4\}_2 }}} \quad
{{\tableau*[lcY]{\{4\}_1\cr\{3\}_2\cr 
\{1\}_0&\{2\}_1&\{3,5\}_2 }}} }
\end{equation}
\end{example}

\smallskip

To define the semi-standard case of affine s-v tableaux,
we impose certain conditions on certain reading words of 
standard affine s-v tableaux. 
Recall that the reading word of a semi-standard tableau 
is made of the letters from top to bottom and left to right.
Semi-standard tableaux of weight $\alpha$ are
in bijection with standard tableaux having increasing reading 
words in the alphabets
\begin{equation}
\mathcal A_{\alpha,x} =
[1+\Sigma^{x-1}\alpha,\Sigma^x\alpha] \quad \text{where} \quad
\Sigma^x\alpha
 = \sum_{i\leq x}\alpha_i\,,
\end{equation}
for $x=1,\ldots,\ell(\alpha)$.  
This arises through {\it $\alpha$-standardization} defined 
on $T$ iteratively as follows:
start with $r=|\alpha|$ and relabel 
the rightmost $\ell(\alpha)$ by $r$.
Let $r=r-1$ and repeat. 
When there are no $\ell(\alpha)$ remaining, 
perform the relabeling of $\ell(\alpha)-1$.
$\alpha$-standardization applies to set-valued tableaux as well,
where the reading word is defined as usual and letters in the 
same cell are read in decreasing order.

\smallskip

It is in this spirit that we have defined our affine $K$-theoretic
generalization of tableaux.    Since letters in standard 
affine s-v tableaux 
can occur with multiplicity, we must consider
the {\it lowest reading word} in $\mathcal A$, obtained by
reading the lowest occurrence of the letters in $\mathcal A$ 
from top to bottom and left to right.  Again,
letters in the same cell are read in decreasing order.
In Example~\ref{exstdsvktab},the
lowest reading words in $\{1,\ldots,5\}$ are
$21435,52134,52134,32145,32145,51324,51243,41253$.

\smallskip

\begin{definition}
\label{setvalued}
For any $k$-bounded composition $\alpha$, an affine s-v tableau of 
weight $\alpha$ is a standard affine s-v tableau of degree $|\alpha|$ 
where, for each $1\leq x\leq \ell(\alpha)$,
\begin{enumerate}
\item the lowest reading word in 
$\mathcal A_{\alpha,x}$ is increasing 
\item the letters of $\mathcal A_{\alpha,x}$ occupy $\alpha_x$ distinct
residues
\item the letters of $\mathcal A_{\alpha,x}$ form a horizontal strip.
\end{enumerate}
\end{definition}

Let $\mathcal T^k_\alpha(\lambda)$ denote the
affine s-v tableaux of shape $\core(\lambda)$
and weight $\alpha$ and
$\mathcal T^k(\lambda)$ be those of shape $\core(\lambda)$ and
any weight.
Note the definition extends simply to skew affine s-v tableaux by 
deleting letters $1,\ldots,a$ from an affine s-v tableau.  

\begin{example}
The set $\mathcal T^2_{(2,1,1,1)}(2,1,1)$ of affine s-v tableaux
with weight (2,1,1,1) contains the affine s-v tableaux in \eqref{ssasvtab}
of Example~\ref{exstdsvktab}.
\end{example}

\smallskip

\subsection{Retrieving set-valued and $k$-tableaux}

To justify that affine s-v tableaux are in fact an affine 
$K$-theoretic version of tableaux, we connect them
to $k$-tableaux and set-valued tableaux.

\smallskip

\begin{proposition}
\label{propsvkt1}
For any  $k$-bounded partition $\lambda$ where $h(\lambda)\leq k$,
the affine  s-v tableaux of shape $\lambda$ are simply the set-valued 
tableaux of shape $\lambda$.
\end{proposition}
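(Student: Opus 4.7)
The plan is to show that, under the hypothesis $h(\lambda)\leq k$, the three constraints defining an affine s-v tableau collapse onto the classical set-valued conditions, so that the two notions coincide on $\lambda$. First I would record the structural consequences of the hypothesis. Since every hook of $\lambda$ has length at most $k$, $\lambda$ contains no hook of length $k+1$, so $\lambda$ is itself a $(k+1)$-core and $\core(\lambda)=\lambda$. Moreover, the set $\{j-i : (i,j)\in\lambda\}$ consists of $h(\lambda)\leq k$ consecutive integers and hence its members are pairwise distinct modulo $k+1$; by the reasoning behind Remark~\ref{ktabtab} the same holds for every subshape $\mu\subseteq\lambda$, and each such $\mu$ is a $(k+1)$-core.

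Next I would handle the standard case. Let $T$ be a standard affine s-v tableau of shape $\lambda$ and degree $n$. For each $x$, $\shape(T_{\leq x})\subseteq\lambda$ is a $(k+1)$-core whose diagonals carry pairwise distinct residues, so its removable corners do too. Definition~\ref{stdsetvalued} therefore forces the label $x$ to sit in a single cell, namely the unique removable corner of $T_{\leq x}$ of the appropriate residue. Hence $T$ is a standard set-valued filling in the usual sense: each label $1,\ldots,n$ appears exactly once and the cells can be added one at a time producing a growing sequence of Young subdiagrams of $\lambda$. Conversely, in any standard set-valued filling of $\lambda$, the cell receiving the $x$-th label is automatically a removable corner of the shape formed by the first $x-1$ labels and (by the distinct-residue property) is the unique removable corner of its residue; every intermediate shape is a $(k+1)$-core because every subshape of $\lambda$ is. This identifies standard affine s-v tableaux of shape $\lambda$ with standard set-valued tableaux of shape $\lambda$.

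For the semistandard case I would fix the weight $\alpha$ and verify that the three conditions of Definition~\ref{setvalued} transcribe exactly to classical set-valued semistandardness. Condition (3) says the $\alpha_x$ labels in $\mathcal{A}_{\alpha,x}$ lie in a horizontal strip; under the standardization bijection this is exactly the statement that the $\alpha_x$ copies of letter $x$ in a semistandard set-valued tableau occupy distinct columns. Condition (2) is automatic: a horizontal strip in a skew shape has cells on pairwise distinct diagonals (two cells on the same diagonal would force an extra cell in the same column), and the hypothesis $h(\lambda)\leq k$ makes distinct diagonals carry distinct residues. Condition (1), on the lowest reading word within each $\mathcal{A}_{\alpha,x}$, is the sole point that requires genuine checking: one must see that decoding blocks back to repeated letters yields weakly increasing rows, strictly increasing columns, and strictly increasing multisets within each cell. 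I would track $\alpha$-standardization cell by cell, using that the decreasing in-cell order imposed on labels in a common cell forces the original values there to strictly increase, while the top-to-bottom, left-to-right order between cells encodes the row and column inequalities. Pinning down this last translation is the only real obstacle; everything else is bookkeeping driven by the distinct-residue observation.
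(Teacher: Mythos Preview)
Your approach is correct and coincides with the paper's: both hinge on the observation that when $h(\lambda)\le k$ the diagonals of $\lambda$ (and of any subshape) carry pairwise distinct $(k+1)$-residues, so every letter occurs in a single cell and the affine conditions collapse to the ordinary set-valued ones via $\alpha$-standardization. The paper's argument is simply a compressed version of yours---it does not separate out the standard case, and it asserts the reading-word translation in one line rather than unpacking it as you do.

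One small slip to fix: in the converse of your standard case you write that the cell receiving label $x$ is a removable corner ``of the shape formed by the first $x-1$ labels.'' This is false when $x$ is lonely in its cell (then that cell is not even in $\shape(T_{\le x-1})$). What Definition~\ref{stdsetvalued} actually requires, and what your row/column argument in fact shows, is that $x$ sits in a removable corner of $\shape(T_{\le x})$; just replace $x-1$ by $x$ there.
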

\begin{proof}
Given $h(\lambda)\leq k$, $\core(\lambda)=\lambda$ and no 
two diagonals of $\lambda$ have the same residue.  
Thus, an affine s-v tableau of weight $\alpha$ is a set-valued tableau 
with no repeated letters.  The substitution of all letters 
in $\mathcal A_{\alpha,x}$ by $x$ gives a set-valued tableau
of weight $\alpha$
since conditions on affine s-v tableaux imply these letters 
form a horizontal strip and occupy $\alpha_x$ distinct cells.
On the other hand, the $\alpha$-standardization of a set-valued 
tableau of weight $\alpha$ ensures that the reading word of
the $\alpha_x$ letters in $\mathcal A_{\alpha,x}$ is increasing
and forms a horizontal strip.
\end{proof}

To make the connection with $k$-tableaux, we need several basic 
properties of affine s-v tableaux.  We say that $x$ is {\it lonely}
when a letter $x$ occurs in cell without another letter. 

\begin{property}
\label{skewnonempty}
Given an affine s-v tableau $T$,
$\shape(T_{\leq x})/\shape(T_{\leq x-1})\neq \emptyset$
if and only if every $x$ is lonely in $T_{\leq x}$, for
any letter $x\in T$.
\end{property}
\begin{proof}
If $\shape(T_{\leq x})=\shape(T_{\leq x-1})$ then 
no $x$ can be lonely since $T_{\leq x-1}$ is obtained 
by deleting the letter $x$ from $T_{\leq x}$.
On the other hand,
we will show that if $T_{\leq x}$ has some $x$ that
is not lonely then no $x$ are lonely implying
the shapes must be equal. 
Suppose $T_{\leq x}$ has a cell of some residue $i$ 
containing $x$ by itself and one containing $x$ with another letter.
Then $\shape(T_{\leq x-1})$ has both a removable and an addable 
$i$-corner.
This contradicts that a core never contains an addable and removable
$i$-corner.
\end{proof}

\begin{property}
\label{completeset}
Given an affine s-v tableau $T$, if $\gamma^{(x)}=\shape(T_{\leq x})$ then
$$|\mathfrak p(\gamma^{(x)})|= |\mathfrak p(\gamma^{(x-1)})| +1
\quad\text{for any} \quad
\gamma^{(x)}\neq\gamma^{(x-1)}\,.
$$ 
\end{property}
\begin{proof}
Given $T$ is an affine s-v tableau, there is an
$x$ in all removable $i$-corners of the core 
$\gamma^{(x)}$.  If $\gamma^{(x)}\neq\gamma^{(x-1)}$
then these $x$ are all lonely by Property~\ref{skewnonempty}.
Therefore, $\gamma^{(x)}$ is $\gamma^{(x-1)}$ plus 
addable $i$-corners
and the result follows from
Remark~\ref{addonecore}.
\end{proof}

From Property~\ref{completeset}, an affine s-v tableau $T$ of weight $\alpha$ 
satisfies $|\kbnd(\shape(T_{\leq x}))|=|\kbnd(\shape(T_{\leq x-1}))|+\{0,1\}$ 
for all $x=1,2,\ldots,|\alpha|$.  Consequently:

\begin{corollary}
\label{tridegree}
If there is an affine s-v tableau of weight
$\alpha$ and shape $\core(\lambda)$, then
$|\alpha|\geq |\lambda|$.
\end{corollary}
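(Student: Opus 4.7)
The plan is to exploit the telescoping behavior of $|\kbnd(\shape(T_{\leq x}))|$ as $x$ ranges from $0$ to $|\alpha|$. By Definition~\ref{setvalued}, any affine s-v tableau $T$ of weight $\alpha$ is a standard affine s-v tableau of degree $|\alpha|$, so I may speak of the subtableaux $T_{\leq x}$ and their shapes $\gamma^{(x)} = \shape(T_{\leq x})$ for $x = 0, 1, \ldots, |\alpha|$, with $\gamma^{(0)} = \emptyset$ and $\gamma^{(|\alpha|)} = \core(\lambda)$.

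Next I would apply Property~\ref{completeset} to each step of this chain: for every $x \in \{1, \ldots, |\alpha|\}$, the quantity $|\kbnd(\gamma^{(x)})| - |\kbnd(\gamma^{(x-1)})|$ is either $0$ (when $\gamma^{(x)} = \gamma^{(x-1)}$) or $1$ (when the shape genuinely changes). Summing these differences telescopically yields
\begin{equation*}
|\lambda| \;=\; |\kbnd(\core(\lambda))| \;=\; |\kbnd(\gamma^{(|\alpha|)})| - |\kbnd(\gamma^{(0)})| \;=\; \sum_{x=1}^{|\alpha|}\bigl(|\kbnd(\gamma^{(x)})| - |\kbnd(\gamma^{(x-1)})|\bigr) \;\leq\; |\alpha|,
\end{equation*}
which is exactly the claim. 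There is no real obstacle here; the statement is a direct bookkeeping consequence of Property~\ref{completeset} once the semi-standard tableau is read through its underlying standard structure of degree $|\alpha|$.
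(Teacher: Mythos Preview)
Your proof is correct and follows exactly the approach taken in the paper: the corollary is stated there as an immediate consequence of Property~\ref{completeset}, via the observation that $|\kbnd(\shape(T_{\leq x}))|-|\kbnd(\shape(T_{\leq x-1}))|\in\{0,1\}$ for each $x=1,\ldots,|\alpha|$, which you have simply written out as a telescoping sum.
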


\smallskip

We are now prepared to prove that the family of affine s-v tableaux 
includes $k$-tableaux.

\begin{proposition}
\label{propsvkt2}
The set of affine s-v tableaux
$\mathcal T_\alpha^k(\lambda)$ when $|\alpha|=|\lambda|$ 
is the set of $k$-tableaux with weight $\alpha$ and shape $\core(\lambda)$.
\end{proposition}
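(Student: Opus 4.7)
The plan is to realize the claimed equality through an (un)standardization bijection, handling the two inclusions separately.

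For the forward inclusion, fix $T\in\mathcal T^k_\alpha(\lambda)$ and write $\gamma^{(x)}=\shape(T_{\leq x})$. Property~\ref{completeset} says each shape-changing step increments $|\mathfrak p|$ by exactly $1$; combined with $|\mathfrak p(\gamma^{(0)})|=0$ and $|\mathfrak p(\gamma^{(|\alpha|)})|=|\lambda|=|\alpha|$, this forces $\gamma^{(x)}\neq \gamma^{(x-1)}$ at every step. Property~\ref{skewnonempty} then makes every letter $x$ lonely in $T_{\leq x}$; if a cell held two letters $x<y$, then $y$ would share that cell with $x$ at stage $y$, contradicting loneliness. So every cell of $T$ is a singleton. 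Un-standardize by replacing each letter of $\mathcal A_{\alpha,x}$ with $x$, giving a filling $\widetilde T$ of shape $\core(\lambda)$ and weight $\alpha$. Semi-standardness of $\widetilde T$ comes from the set-valued inequalities on $T$'s singletons (row inequalities give $x\leq y$ in rows, and column inequalities combined with condition~(3) of Definition~\ref{setvalued}, which forbids two labels of $\mathcal A_{\alpha,x}$ in one column, give strict column increase), while condition~(2) of Definition~\ref{setvalued} asserts that the cells of $\widetilde T$ labeled $x$ occupy $\alpha_x$ distinct residues. Hence $\widetilde T$ is a $k$-tableau of shape $\core(\lambda)$ and weight $\alpha$.

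For the reverse inclusion, start with a $k$-tableau $T'$ of shape $\core(\lambda)$ and weight $\alpha$, and set $\gamma^{(x)}=\shape(T'_{\leq x})$. The cells of letter $x$ form a weak strip $\gamma^{(x)}/\gamma^{(x-1)}$ of $\alpha_x$ distinct residues between cores. I would show that this strip admits a unique refinement $\gamma^{(x-1)}=\beta^{(x,0)}\subset\cdots\subset\beta^{(x,\alpha_x)}=\gamma^{(x)}$ through cores, where each $\beta^{(x,j)}/\beta^{(x,j-1)}$ consists of all addable corners of a single residue: existence follows from Remark~\ref{addonecore} (each such addition produces a core and bumps $|\mathfrak p|$ by $1$), while the ordering of residues is pinned down by Property~\ref{thecoreprop}, which controls how extremals of a given residue are positioned in any core. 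Assigning label $\Sigma^{x-1}\alpha+j$ to the cells of $\beta^{(x,j)}/\beta^{(x,j-1)}$ yields a singleton-celled filling $T$ with labels $1,\ldots,|\alpha|$. Then $T\in\mathcal T^k_\alpha(\lambda)$ by construction: the standard-affine property is built into the refinement; condition~(1) of Definition~\ref{setvalued} holds because cells are singletons and labels in $\mathcal A_{\alpha,x}$ are assigned so that first occurrences appear in reading order; condition~(2) restates the $\alpha_x$-residue property of $T'$; and condition~(3) is inherited from the horizontal-strip structure of $T'$'s $x$-cells.

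The main obstacle is the single-residue refinement in the reverse direction: showing that any weak strip of size $r$ between two cores admits a unique core-preserving serialization into $r$ single-residue additions, with the residue order compatible with reading order. Once this is secured via Remark~\ref{addonecore} and Property~\ref{thecoreprop}, the verification against the forward direction is routine.
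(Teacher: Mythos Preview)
Your forward direction is essentially the paper's argument: Property~\ref{completeset} together with $|\alpha|=|\lambda|$ forces every step to change shape, Property~\ref{skewnonempty} then makes every letter lonely, and un-standardizing yields a $k$-tableau.

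For the reverse direction your strategy---standardize a $k$-tableau by refining each letter's horizontal strip one residue at a time---is again the paper's, but the paper does not attempt to prove the core-refinement fact from scratch: it describes the $\alpha$-standardization explicitly and then \emph{cites} \cite{[LMcore]} for the statement that the output is a standard $k$-tableau. You instead try to derive the refinement directly from Remark~\ref{addonecore} and Property~\ref{thecoreprop}, and this is where your proposal has a genuine gap. Remark~\ref{addonecore} only says that adding all addable $i$-corners to a core gives a core with $|\mathfrak p|$ incremented; it does not by itself guarantee that the $\alpha_x$ residues of an affine strip $\gamma^{(x)}/\gamma^{(x-1)}$ can be added one at a time \emph{in the specific order dictated by the lowest reading word} while remaining a core at every intermediate stage. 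That is precisely the content imported from \cite{[LMcore]} (and is close in spirit to the later Proposition~\ref{iteratestrip}, which peels the rightmost residue off an affine strip). Property~\ref{thecoreprop} is indeed the right structural tool, but you have not carried out the argument. Your claim that the refinement is ``unique'' is also loose: non-adjacent residues modulo $k+1$ commute, so there are in general several refinements through cores; only the reading-order-compatible one is singled out.

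In summary, the overall shape of your proof matches the paper's, but the paper outsources the reverse-direction's key step to \cite{[LMcore]}, whereas your sketch of a self-contained argument for that step points at reasonable ingredients without actually closing the gap.
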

\begin{proof}
Consider an affine s-v tableau $T$ of weight $\alpha$ and
shape $\gamma=\core(\lambda)$ where $n=|\alpha|=|\lambda|$.  
For $x=1,\ldots, n$, let $\lambda^{(x)}=\kbnd(\shape(T_{\leq x}))$.
Since $|\lambda^{(x)}|=|\lambda^{(x-1)}|+\{0,1\}$
by Property~\ref{completeset} and $|\lambda^{(n)}|=n$,
$|\lambda^{(x)}|=|\lambda^{(x-1)}|+1$ for all $x$.
Thus, 
$\lambda^{(x-1)}\neq\lambda^{(x)}$ 
which implies that all $x$ are lonely in $T_{\leq x}$ by 
Property~\ref{skewnonempty}, for all $x$. 
Therefore no cell of $T$ has a set of cardinality more than one.
The conditions on affine s-v tableau imply that
replacing all letters in $\mathcal A_{\alpha,x}$ by $x$
gives a semi-standard filling where $x$ occupies $\alpha_x$
distinct residues and thus, $T$ is a $k$-tableau.

On the other hand, given a $k$-tableau $T$ of weight $\alpha$, 
we can {$\alpha$-standardize} $T$ iteratively from $r=|\alpha|$ as 
follows: relabel every $x(i)\in T$ by $r$, where $x$ is the rightmost 
letter of $T$ that is not larger than $r$ and $i$ is its residue.  
Let $r=r-1$.  It was shown in \cite{[LMcore]} that the resulting 
tableau $U$ is a standard $k$-tableau and by construction, the lowest 
reading 
word in $\mathcal A_{\alpha,x}$ is clearly increasing.  Therefore, 
since the letter $x$ occupies $\alpha_x$ distinct residues and forms 
a horizontal strip in $T$, $U$ meets the conditions of 
an affine s-v tableau of weight $\alpha$.
\end{proof}

\section{Alternate characterizations}

\subsection{Affine Weyl group characterization}

In the theory of $k$-Schur functions, the discovery that 
$k$-tableaux are reduced words for grassmannian 
permutations in the affine symmetric group was the spring board
to understanding the $k$-Schur role in geometry.  Here, we
investigate a similar interpretation for affine s-v tableaux.
To start, we consider the standard case and recall results in 
the $k$-tableaux case.

\smallskip

Let $\tilde S_{n}$ denote the affine Weyl group of $A_{n-1}$,
generated by $\langle s_0, s_1,\ldots, s_{n-1}\rangle$ and
satisfying the relations
\begin{equation}
\label{coxeter}
\begin{array}{ccc}
&s_i^2 = 1 & \text{ for all $i$ } \\
&s_is_{i+1}s_i = s_{i+1}s_is_{i+1}& \text{for all $i$}\\
&s_is_j = s_js_i & \text{if}\; |i-j|> 1\,,
\end{array}
\end{equation}
where indices are taken modulo $n$ (hereafter we will always be 
working mod $n$).  A word $i_1 i_2\cdots i_m$ in the alphabet 
$\{0,1,\ldots,n-1\}$ corresponds to the permutation $w \in \tilde S_{n}$ 
if $w=s_{i_1} \dots s_{i_m}$.  
The length $\ell(w)$  of $w$  is defined to be the length of its
shortest word.  Any word of this length is said to be {\it reduced}
and we denote the set of all reduced words for $w$ by
$\mathcal R(w)$.
The set $\tilde S_n^0$ of grassmannian elements are the minimal 
length coset representatives of $\tilde S_n/S_n$, where $S_n$ is the 
finite symmetric group. In fact, $w\in \tilde S_n^0$ iff 
every reduced word for $w$ ends in $0$.  

\smallskip

Consider operators on set-valued tableaux defined
for $i=0,\ldots,k$ by
$$\mathfrak s_{i,x}: \hat T \to T \,,
$$ 
where $T$ is obtained by adding an $x$ to
all addable or all removable $i$-corners of $\hat T$.  
It turns out \cite{[LMcore]} that the set of 
reduced words for a fixed $w\in\tilde S_{k+1}^0$
is in bijection with the set of $k$-tableaux of some
shape $\core(\lambda)$.  In particular, each reduced word 
$i_\ell i_{\ell-1} \cdots i_1$ for $w\in \tilde S^{0}_{k+1}$ 
is sent to the standard $k$-tableau
$\s_{i_\ell,\ell} \s_{i_{\ell-1},\ell-1}\cdots \s_{i_1,1}\emptyset$
on $\ell$ letters.

\smallskip

It is natural to work with operators defined on shapes,  
for $i=0,\ldots,k$, by $$
\mathfrak s_{i}: \gamma \to \gamma\; +\;
\text{its addable $i$-corners} \,.
$$
These can be viewed as an affine analog of operators 
introduced in \cite{Buch} and where the set-up follows that 
of \cite{FG}.

\begin{remark}
\label{addonezerosi}
A number of useful properties are satisfied by the
$\mathfrak s_i$ operators.
\begin{enumerate}
\item If $\beta$ is a core with an addable $i$-corner, then
$\s_i(\beta)$ is a core and
$|\kbnd(\s_i(\beta))|=|\kbnd(\beta)|+1$
by Remark~\ref{addonecore}
\item
The bijection between $\tilde S_{k+1}^0$ and $\mathcal C^{k+1}$
defined by the map
$$
\mathfrak t:\;  w\; \rightarrow\;
\core(\lambda)=\s_{i_\ell} \cdots \s_{i_1}\emptyset
\,,
$$
for any $i_\ell\cdots i_1\in \mathcal R(w)$,
has the property that $|\lambda|=\ell(w)$
since a word of length $\ell$ corresponds to a 
$k$-tableaux on $\ell$ letters.
\item
If $i_\ell i_{\ell-1}\cdots i_1$ is a reduced word for 
an affine grassmannian permutation,
then $\s_{i_\ell}\s_{i_{\ell-1}} \cdots \s_{i_1}\emptyset$ has an
addable $i_j$-corner for all $j<\ell$ by (1) and (2).
\item  When acting on cores, we have the relations
$$
\s_i^2=\s_i\;\text{for all $i$},\quad
\s_i\s_{i+1}\s_i = \s_{i+1}\s_i\s_{i+1},\quad
\s_i\s_j=\s_j\s_i\;\text{when $|i-j|>1$}\,.
$$
\end{enumerate}
\end{remark}

Denote the affine grassmannian permutation associated 
to $\lambda\in\mathcal P^k$ by
$$
w_\lambda=\mathfrak t^{-1}(\core(\lambda))
\,.
$$
\begin{remark}
\label{part2word}
A quick way to construct $w_\lambda$ from $\lambda$ is to take the 
residues of $\lambda$  read from {\it right} to {\it left}
and top to bottom \cite{[LMcore]}.  For example,
$w_{(2,1,1)}\in \tilde S_3$ is
\begin{equation}
{\tiny{\tableau*[scY]{1\cr 2\cr 
0&1}}} \quad
\to 1\,2\,1\,0
\end{equation}
\end{remark}

The relations in Remark~\ref{addonezerosi}(4) arise
in the nil Hecke algebra.  We will discuss this
further in \S~\ref{AGP}, but for now are interested in 
studying the equivalence classes of words under these relations.
To be precise, we consider the set $\mathcal W(w_\lambda)$ 
of all words whose reduced expression is in $\mathcal R(w_\lambda)$
under the relations
\begin{equation}
\begin{array}{ccc}
&u_i^2=u_i & \quad\text{for all $i$}\\
&u_i u_{i+1} u_i = u_{i+1} u_i u_{i+1}&\quad\text{for all $i$}\\
&u_i u_j= u_j u_i&\quad\text{when $|i-j|>1$}\,
\end{array}
\end{equation}
Note that
$\mathcal R(w_\lambda)= 
\{ u\in \mathcal W(w_\lambda) : \ell(u)=\ell(w_\lambda)\}$.

\begin{remark}
\label{hasaddableorremovable}
For any $i_r\cdots i_1\in \mathcal W(w_\lambda)$,
$\s_{i_{r}}\cdots \s_{i_1}\emptyset$ has a removable or
an addable $i_m$-corner for all $m\leq r$.
In particular, $i_r\cdots i_1\in \mathcal W(w_\lambda)$ if and only if $i_1=0$.
This follow by iterating the following argument:
if $i_{\ell+1}i_\ell \cdots i_1\in\mathcal W(w_\mu)$ then
there is some $t$ where 
$j_{\ell}\cdots j_t j_t\cdots  j_1\sim 
i_{\ell+1} i_{\ell}\cdots i_1$, for $\ell=\ell(w_\mu)$.
Then $\s_{j_\ell}\cdots \s_{j_1}\emptyset$ has an
addable $j_m$-corner by Remark~\ref{addonezerosi}(3) implying
$\s_{j_\ell}\cdots\s_{j_t}\s_{j_t}\cdots \s_{j_1}\emptyset$ has an
addable or removable  $j_m$-corner.
Therefore, $\s_{i_{\ell+1}}\cdots \s_{i_1}\emptyset$
has an addable or a removable $i_m$-corner for any $m\leq \ell+1$
by Remark~\ref{addonezerosi}(4).
\end{remark}

\smallskip

Just as the $k$-tableaux represent reduced words for
affine grassmannian permutations, we find that standard
affine s-v tableaux of fixed shape $\core(\lambda)$ are 
none other than the words whose reduced expression is
in $\mathcal R(w_\lambda)$ .

\begin{proposition}
\label{bij}
For $\lambda\in\mathcal P^k$, there is a bijection
$$
\mathfrak s:\mathcal W({w_\lambda})\to \mathcal T^{k}_{1^{m}}(\lambda)
$$
defined by $\mathfrak s(i_m i_{m-1} \cdots i_1)=
\s_{i_m,m} \s_{i_{m-1},m-1}\cdots s_{i_1,1}\emptyset$.
\end{proposition}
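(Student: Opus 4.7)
The plan is to exhibit an explicit inverse to $\mathfrak{s}$ by reading residues off of the tableau, and then verify that both maps are well-defined and mutually inverse. Given $T\in\mathcal{T}^k_{1^m}(\lambda)$, Definition~\ref{stdsetvalued} assigns to each $1\le x\le m$ a unique residue $i_x$ such that the letter $x$ occupies exactly the removable $i_x$-corners of $\shape(T_{\le x})$; I set $\mathfrak{s}^{-1}(T)=i_m i_{m-1}\cdots i_1$.

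I first check that $\mathfrak{s}$ is well-defined. Given $u=i_m\cdots i_1\in\mathcal{W}(w_\lambda)$, iterating Remark~\ref{hasaddableorremovable} on prefixes (each ending in $i_1=0$ and therefore in some $\mathcal{W}(w_{\mu_x})$) shows that at each step $x$ the current core has an addable or removable $i_x$-corner; the alternative is unambiguous since a core cannot have both for a single residue (Property~\ref{thecoreprop}), so $\mathfrak{s}_{i_x,x}$ places $x$ in a nonempty set of cells, which after the operation are exactly the removable $i_x$-corners of the new shape. Coreness is preserved by Remark~\ref{addonezerosi}(1), and the final shape equals $\core(\lambda)$ since the operators $\mathfrak{s}_i$ satisfy the relations of Remark~\ref{addonezerosi}(4) and any reduced $u'\in\mathcal{R}(w_\lambda)$ gives $\mathfrak{s}_{u'}\emptyset=\core(\lambda)$. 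Thus $\mathfrak{s}(u)$ meets Definition~\ref{stdsetvalued} and has the correct shape.

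The main step is proving $\mathfrak{s}^{-1}(T)\in\mathcal{W}(w_\lambda)$, by induction on $m$. Let $\mu=\mathfrak{p}(\shape(T_{\le m-1}))$ and $v=\mathfrak{s}^{-1}(T_{\le m-1})\in\mathcal{W}(w_\mu)$ by the inductive hypothesis. If $\shape(T_{\le m})\ne \shape(T_{\le m-1})$, then $\core(\lambda)=\mathfrak{s}_{i_m}\core(\mu)$ and $|\lambda|=|\mu|+1$ by Remark~\ref{addonecore}; prepending $i_m$ to any reduced word for $w_\mu$ yields a reduced word for $w_\lambda$, placing $i_m v$ in $\mathcal{W}(w_\lambda)$. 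In the subtler case $\shape(T_{\le m})=\shape(T_{\le m-1})$, we have $\mu=\lambda$ and $\core(\lambda)$ has a removable $i_m$-corner; deleting all removable $i_m$-corners of $\core(\lambda)$ yields a core $\beta$ with $|\mathfrak{p}(\beta)|=|\lambda|-1$ (again by Remark~\ref{addonecore}), so $w_\lambda$ admits a reduced word beginning with $i_m$. By Matsumoto--Tits, any reduced word for $w_\lambda$ is braid/commutation-equivalent to one of the form $i_m v'$; writing $v\sim i_m v'$, we obtain $i_m v\sim i_m i_m v'\sim i_m v'\in\mathcal{R}(w_\lambda)$ via the relation $u_{i_m}^2=u_{i_m}$.

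The two maps are then mutually inverse by construction: at step $x$ the operator $\mathfrak{s}_{i_x,x}$ places $x$ precisely in the removable $i_x$-corners of $\shape(T_{\le x})$, which by the defining property of $i_x$ are exactly the cells of $T$ containing $x$. The main obstacle is the shape-unchanged case of the induction, which requires the interplay of the relation $u_{i_m}^2=u_{i_m}$, the Matsumoto--Tits property for reduced words, and the correspondence between left descents of $w_\lambda$ and removable corners of $\core(\lambda)$.
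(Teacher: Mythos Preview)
Your proof is correct and shares the paper's overall architecture: both construct the inverse by reading off the residue $j_a$ of letter $a$ in $T$, and both verify well-definedness of $\mathfrak{s}$ via Remark~\ref{hasaddableorremovable}. The difference lies in how you show the extracted word lies in $\mathcal W(w_\lambda)$. You proceed by induction on $m$ with a case split on whether the shape changes, invoking Matsumoto--Tits and the idempotent relation $u_i^2=u_i$ in the unchanged case. The paper instead dispatches this in one stroke: since the operators $\mathfrak{s}_i$ on cores satisfy the 0-Hecke relations (Remark~\ref{addonezerosi}(4)), any reduced word $j'_\ell\cdots j'_1$ equivalent to $j_m\cdots j_1$ yields the same core $\core(\lambda)$, and since a reduced word of length $\ell$ produces a core with $\ell$ $k$-bounded hooks, this forces $j'_\ell\cdots j'_1\in\mathcal R(w_\lambda)$. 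Your approach is more hands-on and makes the role of the left descents (removable corners) explicit, which is instructive; the paper's argument is shorter because it packages the induction and both cases into the single observation that the $\mathfrak{s}_i$ form a faithful 0-Hecke action on cores. For injectivity, the paper gives a direct ``first point of disagreement'' argument rather than relying on the mutual-inverse claim, but your version works as well.
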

\begin{proof}
Given ${i_m}\cdots {i_1}\in\mathcal W(w_\lambda)$,
we claim that $T=\mathfrak s(i_{m}\cdots i_1)$
is a standard affine s-v tableau of degree $m$.
Since $T_{\leq 1}= {\tiny\tableau*[mcY]{\{1\}} }$,
assume by induction that $T_{\leq m-1}$ is a standard
affine s-v tableau of core shape $\gamma$.
Remark~\ref{hasaddableorremovable} implies $\gamma$
has either an addable or a removable $i_{m}$-corner
and thus $T=T_{\leq m}$ is 
obtained by putting an $m$ in all such corners.
Therefore the shape of $T$ is either $\gamma$ or
$\gamma$ plus its addable $i_{m}$-corners;
in both cases it is a core.
Further, $T$ has shape $\core(\lambda)$ by 
Remark~\ref{addonezerosi}.

For any $T\in \mathcal T^k_{1^m}(\lambda)$,
$T=\s_{j_m,m}\cdots \s_{j_1,1}\emptyset$ where
$j_a$ denotes the residue of letter $a$ in $T$.
Thus, $\core(\lambda)=\s_{j_m}\cdots \s_{j_1}\emptyset$ and
to prove $\mathfrak s$ is onto, we need
${j_m}\cdots {j_1}\in\mathcal W(w_\lambda)$.
Consider a reduced word $j_{\ell}'\cdots j_1'$ 
equivalent to ${j_m}\cdots {j_1}$.
Remark~\ref{addonezerosi}(4) implies
$\core(\lambda)= \s_{j_{\ell}'}\cdots \s_{j_1'}\emptyset$
and therefore
${j_{\ell}'}\cdots {j_1'}\in\mathcal R(w_\lambda)$.

To see that $\mathfrak s$ is 1-1, consider $\mathfrak s(i_m\cdots i_1)=
\mathfrak s(\hat i_{\hat m}\cdots \hat i_{1})$
for $i_m\cdots i_{1}, \hat i_{\hat m}\cdots \hat i_{1}\in
\mathcal W(w_\lambda)$. By definition, $\mathfrak s_{i,x} \hat T$ 
has an $x$ in the addable or removable $i$-corners of $\hat T$.  
Therefore, 
if there is some (minimal) $j$ where $i_j\neq \hat i_j$ clearly
$\mathfrak s_{i_j,j} \hat T \neq\mathfrak s_{\hat i_j,j} \hat T$.
\end{proof}

\subsection{Horizontal strip characterization}

In classical and $k$-tableaux theory, characterizing tableaux 
as a sequence of shapes satisfying horizontality conditions 
has many applications.  Most notably, the Pieri rule for affine 
and usual Schur functions is readily apparent in this context.
Moreover, given such an interpretation, the tie between the affine 
Weyl group and $k$-tableaux in the semi-standard case can be made.
With this in mind, we set-out to coin affine s-v tableaux in 
terms of certain horizontal strips. 

\smallskip

We start with the characterization of set-valued 
and $k$-tableaux as a sequence of shapes.  
A {\it set-valued} $r$-strip $(\gamma/\beta,\rho)$ is 
such that
\begin{enumerate}
\item $\gamma/\rho$ is a horizontal $r$-strip 
\item $\beta/\rho$ is a set of $r-|\gamma/\beta|$ 
$\beta$-removable corners
\end{enumerate}
The set-valued tableaux of weight $\alpha$
and shape $\lambda$ are in bijection with the set of 
sequences having the following form:
$$
(\emptyset,\emptyset)\subset (\lambda^{(1)},\rho^{(1)})
\subseteq (\lambda^{(2)},\rho^{(2)})
\subseteq\cdots
\subseteq (\lambda^{(\ell(\alpha))},\rho^{(\ell(\alpha))})
=(\lambda,\rho^{(\ell(\alpha))})
$$
where $(\lambda^{(x)}/\lambda^{(x-1)},\rho^{(x)})$ is a set-valued 
$\alpha_x$-strip, for $x=1,\ldots, \ell(\alpha)$.

\smallskip

For $0\leq r\leq k$, an {\it affine $r$-strip}
$\gamma/\beta$ is a horizontal strip where
\begin{description}
\item[af1] $\gamma$ and $\beta$ are cores
\item[af2] $|\kbnd(\gamma)|-|\kbnd(\beta)|=r$
\item[af3] $\gamma/\beta$ occupies $r$ distinct residues. 
\end{description}
It can be deduced from results in \cite{[LMcore]} that  $k$-tableaux
of weight $\alpha$ and shape $\gamma$ are in bijection with 
chains of the form:
\begin{equation}
\label{altdefktab}
\emptyset\subset
\gamma^{(1)}\subseteq \gamma^{(2)}\subseteq
\cdots\subseteq \gamma^{(\ell(\alpha))}=\gamma
\,,
\end{equation}
where $\gamma^{(x)}/\gamma^{(x-1)}$ is an affine 
$\alpha_{x}$-strip, for $x=1,\ldots, \ell(\alpha)$.

\smallskip

We impose additional conditions on these strips in order
to view affine s-v tableaux in a similar way. 
Given cores $\beta\subseteq\gamma$, a cell
of $\beta$ that lies below a cell of $\gamma$ is 
{\it $\gamma$-blocked}.

\begin{definition}
\label{incstrip}
For $0\leq r\leq k$,
an \dit{affine set-valued} $r$-strip $(\gamma/\beta,\rho)$ is 
such that

\begin{description}
\item[asv1]  $\gamma/\rho$ is a horizontal strip

\item[asv2] $\gamma/\beta$ is an affine horizontal $r-m$-strip,
where $m=|Res(\beta/\rho)|$.

\item[asv3] $\beta/\rho$ is a subset of $\beta$-removable corners 
such that if $c(i)\in\beta/\rho$ then all $\beta$-removable
$i$-corners that are not $\gamma$-blocked lie in $\beta/\rho$.
\end{description}
\end{definition}

\begin{remark}
\label{stripreduce}
A priori, if $(\gamma/\beta,\rho)$ is an affine s-v $r$-strip 
then $\gamma/\beta$ is an affine strip.  In fact, 
$\gamma/\beta$ is an affine $r$-strip when $\beta=\rho$. 
When $k=\infty$, an affine s-v strip is simply a set-valued
strip since residues can occur at most once in a horizontal strip.
\end{remark}

To prove that affine s-v tableaux can be characterized 
in terms of affine s-v strips, we need a number of 
properties about these and affine strips.

\begin{proposition}
\label{bigthe}
\label{iteratestrip}
If $\gamma/\beta$ is an affine $r$-strip and $i$ is the
residue of its rightmost cell, then $\hat\gamma/\beta$ is
an affine $r-1$-strip where $\hat\gamma$ is
$\gamma$ minus its removable $i$-corners.
\end{proposition}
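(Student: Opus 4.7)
My plan is to verify each of the three conditions (af1)--(af3) from the definition of an affine strip for $\hat\gamma/\beta$. The key structural observation is that the rightmost cell $c_0=(r_0,j_0)$ of the horizontal strip $\gamma/\beta$ is itself a $\gamma$-removable $i$-corner: if $(r_0,j_0+1)\in\gamma$ it would either lie in the strip (contradicting ``rightmost'') or in $\beta$ (forcing $c_0\in\beta$ via the partition-shape of $\beta$), so $c_0$ is at the end of its row; likewise $(r_0+1,j_0)\in\gamma$ would either produce a second strip cell in column $j_0$ or, if in $\beta$, force $c_0\in\beta$. Consequently $c_0$ is removable in $\gamma$ and simultaneously an addable $i$-corner of $\beta$.

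For (af1) and (af2), by construction $\gamma=\s_i(\hat\gamma)$, since the positions vacated in passing from $\gamma$ to $\hat\gamma$ are precisely the addable $i$-corners of $\hat\gamma$, and applying $\s_i$ restores them all. Remark~\ref{addonezerosi}(1) then certifies that $\hat\gamma$ is a core with $|\kbnd(\hat\gamma)|=|\kbnd(\gamma)|-1$, and combined with $|\kbnd(\gamma)|-|\kbnd(\beta)|=r$ this yields (af2). For the containment $\hat\gamma\supseteq\beta$: since $c_0$ is an addable $i$-corner of $\beta$ and a core admits no addable and removable corners of the same residue simultaneously (Property~\ref{thecoreprop}), $\beta$ has no removable $i$-corner; hence every $\gamma$-removable $i$-corner lies in $\gamma\setminus\beta$, so the removal does not dip below $\beta$. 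The horizontal-strip property for $\hat\gamma/\beta$ is inherited from $\gamma/\beta$.

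The main obstacle is (af3): showing $\hat\gamma/\beta$ occupies exactly $r-1$ residues. Since only residue-$i$ cells are discarded, this reduces to proving that every cell $c=(r,j)\in\gamma/\beta$ of residue $i$ is a $\gamma$-removable $i$-corner. The horizontal-strip/partition-shape arguments above force $(r+1,j)\notin\gamma$, so $c$ is always at the top of its column, hence extremal in $\gamma$. When $r>r_0$, Property~\ref{thecoreprop}(1) applied to $c_0$ (at the end of its row) propagates end-of-row upward, yielding that $c$ is removable. The delicate case is $r<r_0$: here I plan to argue by contradiction, assuming $(r,j+1)\in\gamma$. The same horizontal-strip/partition reasoning places $(r,j+1)$ in the strip, and iterating yields the entire row segment $(r,j),(r,j+1),\ldots,(r,\gamma_r)$ inside $\gamma/\beta$ with consecutive residues starting at $i$. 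Row $r_0$ analogously contributes a block of consecutive residues ending at $i$. The hypothesis $r\leq k$ on the number of distinct residues in the strip, together with the fact that these two blocks must sit inside the cyclic residue window of size $k+1$, forces a contradiction via a residue count; pinning down this counting step is the part I expect to require the most care.
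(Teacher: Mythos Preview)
Your overall structure is sound, and in fact your argument for $\beta\subseteq\hat\gamma$ is cleaner than the paper's. The ``delicate case'' you isolate for (af3), however, never occurs: the rightmost cell $c_0=(r_0,j_0)$ of a horizontal strip $\gamma/\beta$ always lies in the \emph{lowest} row containing strip cells. Indeed, if some row $r'<r_0$ contained a strip cell, its rightmost one would sit at column $\gamma_{r'}\geq\gamma_{r_0}=j_0$; strict inequality contradicts ``rightmost'', while equality puts two strip cells in column $j_0$, contradicting horizontality. Hence every strip cell, and in particular every residue-$i$ strip cell, lies in a row $\geq r_0$, and your application of Property~\ref{thecoreprop} to $c_0$ already handles all of them. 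The proof is therefore complete without the residue-counting contradiction you were anticipating.

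For comparison, the paper proves the same two-sided identification (the $\gamma$-removable $i$-corners are exactly the residue-$i$ cells of $\gamma/\beta$) but handles the containment ``$\gamma$-removable $i$-corners $\subseteq\gamma/\beta$'' by a contradiction that tracks $k$-bounded hooks column by column to violate $|\kbnd(\gamma)|-|\kbnd(\beta)|=r$. Your route---observe that $c_0$ is a $\beta$-addable $i$-corner, whence $\beta$ has no removable $i$-corner, whence any $\gamma$-removable $i$-corner (which would be $\beta$-removable were it in $\beta$) must lie in $\gamma/\beta$---is more direct and sidesteps the hook bookkeeping entirely.
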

\begin{proof}
Since $|\kbnd(\hat\gamma)|=|\kbnd(\gamma)|-1$ by 
Remark~\ref{addonecore}, it suffices to prove that 
$\beta\subseteq\hat\gamma$.  To this end, we will prove that
the $\gamma$-removable $i$-corners 
are exactly the cells of residue $i$ in $\gamma/\beta$,
where $i$ is the residue of the rightmost cell in $\gamma/\beta$.
Since $\gamma/\beta$ is a horizontal strip 
whose rightmost cell has residue $i$ then all cells of residue 
$i$ in $\gamma/\beta$ are $\gamma$-removable by Property~\ref{thecoreprop} 
since they are all extremal in $\gamma$.  It thus remains to show that any 
$\gamma$-removable $i$-corner is in $\gamma/\beta$.

Suppose by contradiction there is a $\gamma$-removable
$c(i)$ that is also $\beta$-removable (choose the lowest).
Note that it lies higher than any 
$a(i)\in\gamma/\beta$ since the $z(i+1)$ beneath
$a$ is at the top of its column in $\beta$ implying
that all extremals of residue $i+1$ lower than $a$
are at the top of their column in $\beta$ by Property~\ref{thecoreprop}.
Then, the cell left-adj to any $a$ must be in $\gamma/\beta$ since 
otherwise it would end its row in $\beta$ whereas the
extremal left-adj to $c(i)$ does not (contradicting 
Property~\ref{thecoreprop}).
Therefore, the column with $c$ has more $k$-bounded hooks
in $\beta$ than in $\gamma$.
Let $a_1,\ldots, a_r$ denote the lowest cells of residues 
$i_1,\ldots,i_r$, respectively, in $\gamma/\beta$.
Each of these cells lies in a column with one more
$k$-bounded hook in $\gamma$ than in $\beta$
by Property~\ref{thecoreprop}.
In fact, these are the only columns where $\gamma$ has more
$k$-bounded hooks than $\beta$.
Since $|\kbnd(\gamma)|-|\kbnd(\beta)|=r$ it must be that
no column of $\beta$ has more $k$-bounded hooks than
in $\gamma$ and we have our contradiction.
\end{proof}

\begin{property}
\label{distinctres}
Let $\gamma/\beta$ be an affine $r$-strip for some $r\leq k$.
If $\beta$ has a removable $i$-corner that is not $\gamma$-blocked
then $i\not\in Res(\gamma/\beta)$.
\end{property}
\begin{proof}
Consider cell $c(i)$, in some row $r_c$, 
that is $\beta$-removable and at the
top of its column in $\gamma$. Suppose 
by contradiction that there is a $y(i)\in
\gamma/\beta$ and let $r_y$ be the lowest row containing
such a cell.
With $\gamma=\gamma^{(0)}$, let $\gamma^{(j)}$ be the core 
obtained by deleting all $i_j$-corners of $\gamma^{(j-1)}$, 
where $i_j$ is the residue of the rightmost element in 
$\gamma^{(j-1)}/\beta$.  
Proposition~\ref{iteratestrip} implies that
$\gamma^{(j)}/\beta$ is an affine strip.

Note that $y(i)\in\gamma/\beta$ implies that there is some $t$ where 
$i_t=i$.  Since $y$ is the rightmost element of $\gamma^{(t-1)}/\beta$
and all cells  in the same row and to the right of a $\beta$-removable 
cell are in
$\gamma/\beta$, $c(i)$ lies at the end of its row in 
$\gamma^{(t-1)}$ (and thus in $\gamma^{(t)}$) if $r_c<r_y$.
In this case, the cell below $c(i)$ is extremal in 
$\gamma^{(t)}$.  However, 
the cell below $y(i)$ lies at the top of its column in $\gamma^{(t)}$
violating Property~\ref{thecoreprop}.  Therefore, $r_c\geq r_y$.
Note that the cell $z(i-1)$ left-adj to $y(i)$ lies at the end of 
row $r_y$ in $\gamma^{(t)}$.  Thus it is above an extremal
cell of residue $i$ whereas $c(i)$ is at the top of its column.
Again by Property~\ref{thecoreprop} we have a contradiction.
\end{proof}

\smallskip

Since $\gamma/\rho$ is horizontal for any affine s-v strip 
$(\gamma/\beta,\rho)$, no cell in $\beta/\rho$ is
$\gamma$-blocked.  We therefore deduce that:

\smallskip

\begin{corollary}
\label{distinctrescor1}
For any affine set-valued $r$-strip $(\gamma/\beta,\rho)$,
$Res(\gamma/\beta)\cap Res(\beta/\rho)=\emptyset$.
\end{corollary}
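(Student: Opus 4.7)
The plan is to reduce the corollary directly to Property~\ref{distinctres} via the horizontality hypothesis, so the work is essentially bookkeeping. First I would note that by condition \textbf{asv2}, the skew $\gamma/\beta$ is itself an affine $(r-m)$-strip with $r-m\le k$, so Property~\ref{distinctres} is available for $\gamma/\beta$.

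Next I would verify the non-blocking hypothesis: no cell $c\in\beta/\rho$ is $\gamma$-blocked. Suppose for contradiction that $c\in\beta/\rho$ lies directly below some cell $c'\in\gamma$. Since $c$ is a $\beta$-removable corner (by \textbf{asv3}), the cell $c'$ is not in $\beta$, hence certainly not in $\rho\subseteq\beta$. But then both $c$ and $c'$ belong to $\gamma/\rho$ and are vertically adjacent, contradicting \textbf{asv1} which forces $\gamma/\rho$ to be a horizontal strip. Hence every cell of $\beta/\rho$ is non-$\gamma$-blocked.

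Finally, pick any $i\in Res(\beta/\rho)$ and let $c(i)\in\beta/\rho$. By the previous paragraph $c$ is a $\beta$-removable $i$-corner that is not $\gamma$-blocked, so Property~\ref{distinctres} applied to the affine strip $\gamma/\beta$ yields $i\notin Res(\gamma/\beta)$. Since $i\in Res(\beta/\rho)$ was arbitrary, this gives $Res(\gamma/\beta)\cap Res(\beta/\rho)=\emptyset$, as desired.

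I expect no real obstacle here: the only subtle point is the contradiction showing non-blockedness, which hinges on noticing that the cell above a would-be blocked corner is simultaneously in $\gamma$ and outside $\rho$, and this is immediate from the inclusion $\rho\subseteq\beta$ together with the removability of $c$ in $\beta$.
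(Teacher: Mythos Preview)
Your proof is correct and follows essentially the same route as the paper: the paper simply notes in one sentence (just before the corollary) that horizontality of $\gamma/\rho$ forces every cell of $\beta/\rho$ to be non-$\gamma$-blocked, and then invokes Property~\ref{distinctres}. You have spelled out the non-blocking argument in slightly more detail (using $\beta$-removability to get $c'\notin\beta\supseteq\rho$), but the logic is identical.
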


\smallskip

In particular, since $|Res(\gamma/\beta)|=r-m$ by the definition
of affine strip, we have:

\smallskip

\begin{corollary}
\label{distinctrescor}
For any affine set-valued $r$-strip $(\gamma/\beta,\rho)$,
$|Res(\gamma/\rho)|=r$.
\end{corollary}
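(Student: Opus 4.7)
The plan is to reduce the corollary directly to the two facts already in hand: the identity $|Res(\gamma/\beta)| = r-m$ which comes packaged inside the definition of an affine $(r-m)$-strip, and the disjointness statement of Corollary~\ref{distinctrescor1}. There are essentially no new geometric ideas needed beyond bookkeeping.

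First I would observe that since $\rho \subseteq \beta \subseteq \gamma$, the skew shape $\gamma/\rho$ decomposes as a disjoint union of cells $(\gamma/\beta) \sqcup (\beta/\rho)$, so at the level of residues
\[
Res(\gamma/\rho) \;=\; Res(\gamma/\beta) \,\cup\, Res(\beta/\rho).
\]
Next I would apply condition asv2: $\gamma/\beta$ is an affine $(r-m)$-strip, so by condition af3 of the definition of affine strip, $|Res(\gamma/\beta)| = r - m$. By the hypothesis on $m$, we also have $|Res(\beta/\rho)| = m$.

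Finally, I would invoke Corollary~\ref{distinctrescor1}, which tells us that $Res(\gamma/\beta)$ and $Res(\beta/\rho)$ are disjoint. Combining the two cardinalities then gives
\[
|Res(\gamma/\rho)| \;=\; |Res(\gamma/\beta)| + |Res(\beta/\rho)| \;=\; (r-m) + m \;=\; r,
\]
as required. There is no real obstacle here; the work was already done in establishing Property~\ref{distinctres} (which fed Corollary~\ref{distinctrescor1}), and this corollary is simply the numerical consequence of packaging that disjointness with the residue counts built into asv2.
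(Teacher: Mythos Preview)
Your proof is correct and follows exactly the same approach as the paper. The paper's argument is the one-line observation preceding the corollary: since $|Res(\gamma/\beta)|=r-m$ by (af3) and $|Res(\beta/\rho)|=m$ by definition, disjointness (Corollary~\ref{distinctrescor1}) gives $|Res(\gamma/\rho)|=r$; you have simply spelled out the containment $\rho\subseteq\beta\subseteq\gamma$ and the resulting decomposition of $\gamma/\rho$ that makes this work.
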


\begin{property}
\label{iteratesvlemma}
If $(\gamma/\beta,\rho)$ is an affine s-v strip and
$i$ is the residue of the rightmost cell in $\gamma/\rho$
then the cells of residue $i$ in $\gamma/\rho$ 
are exactly the $\gamma$-removable $i$-corners.
\end{property}
\begin{proof}
If the rightmost cell $c(i)$ of $\gamma/\rho$ is in $\gamma/\beta$,
Property~\ref{bigthe} implies that the $\gamma$-removable $i$-corners
are exactly the cells of $\gamma/\beta$ with residue $i$, which
are exactly the cells of $\gamma/\rho$ with residue $i$ by
Corollary~\ref{distinctrescor1}.  
If $c(i)\in\beta/\rho$, note that all cells of residue $i$ in 
$\gamma/\rho$ are $\gamma$-removable corners since the lowest 
cell of residue $i$ in $\gamma/\rho$ is at the end of its row 
implying all $i$-extremals are at the end of their row in $\gamma$ 
by Property~\ref{thecoreprop}.  If there is a $\gamma$-removable 
corner $\bar c(i)\not\in\gamma/\rho$, then
$\bar c\in\rho\subseteq\beta\subseteq\gamma$ implies 
$\bar c\not\in\beta/\rho$ and is a $\beta$-removable $i$-corner.
Therefore $\bar c$ is $\gamma$-blocked by definition of affine 
s-v strip, contradicting that $\bar c$ is $\gamma$-removable.
\end{proof}

\begin{proposition}
\label{iteratesvstrip}
Consider the affine s-v strip $(\gamma/\beta,\rho)$ 
whose rightmost cell in $\gamma/\rho$ has residue $i$.
If $i\in Res(\gamma/\beta)$ then
$(\hat\gamma/\beta,\rho)$ is an affine s-v strip where
$\hat\gamma$ is $\gamma$ minus its $i$-corners.
Otherwise, $(\gamma/\beta,\hat\rho)$ is an affine s-v strip where
$\hat\rho$ is $\rho$ plus its $i$-corners.
\end{proposition}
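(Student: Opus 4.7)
The plan is to handle the two cases separately and, in each, verify conditions (asv1)--(asv3) of Definition~\ref{incstrip} for the new pair. Throughout I will lean on Property~\ref{iteratesvlemma} to describe the residue-$i$ cells of $\gamma/\rho$ and on Corollary~\ref{distinctrescor1} to see that $\text{Res}(\gamma/\beta)$ and $\text{Res}(\beta/\rho)$ are disjoint.

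In \emph{Case 1}, $i\in\text{Res}(\gamma/\beta)$. By Corollary~\ref{distinctrescor1}, $i\notin\text{Res}(\beta/\rho)$, and by Property~\ref{iteratesvlemma} the residue-$i$ cells of $\gamma/\rho$ are exactly the $\gamma$-removable $i$-corners; these all lie in $\gamma/\beta$. In particular, the rightmost cell of $\gamma/\rho$ equals the rightmost cell of $\gamma/\beta$ (any cell of $\gamma/\beta$ further right would contradict rightmostness in the larger set $\gamma/\rho$), and the passage to $\hat\gamma$ only removes cells of $\gamma/\beta$, so $\hat\gamma\supseteq\beta$. Proposition~\ref{iteratestrip} applied to $\gamma/\beta$ (an affine $(r-m)$-strip where $m=|\text{Res}(\beta/\rho)|$) yields that $\hat\gamma/\beta$ is an affine $(r-m-1)$-strip, which is (asv2) for a strip of size $r-1$. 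Condition (asv1) is automatic from $\hat\gamma/\rho\subseteq\gamma/\rho$. The subtle condition is (asv3): the corners $d=(a,b)$ that could be not $\hat\gamma$-blocked yet $\gamma$-blocked are precisely those for which $(a+1,b)$ is one of the removed $\gamma$-removable $i$-corners, which forces the residue of $d$ to be $j=i+1$. For any such $d$, the horizontality of $\gamma/\rho$ forces $d\in\rho$: otherwise $d$ and $(a+1,b)$ would occupy the same column of $\gamma/\rho$. The hardest step is showing $i+1\notin\text{Res}(\beta/\rho)$, for which I would suppose a $d'\in\beta/\rho$ of residue $i+1$, note that $d$ and $d'$ are then two distinct $\beta$-removable $(i+1)$-corners, and apply Property~\ref{thecoreprop} to $\beta$ together with the horizontal-strip property of $\gamma/\rho$ to derive a column of $\gamma/\rho$ carrying two cells.

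In \emph{Case 2}, $i\notin\text{Res}(\gamma/\beta)$. Then the rightmost cell of $\gamma/\rho$ lies in $\beta/\rho$, so $i\in\text{Res}(\beta/\rho)$. Set $\hat\rho=\rho$ together with its addable $i$-corners. The key claim is that these addable $i$-corners of $\rho$ coincide with the residue-$i$ cells of $\beta/\rho$: by Property~\ref{iteratesvlemma} and Corollary~\ref{distinctrescor1} the residue-$i$ cells of $\gamma/\rho$ are $\gamma$-removable $i$-corners all lying in $\beta/\rho$, and the horizontality of $\gamma/\rho$ forces the row and column of $\rho$ adjacent to each such corner to be filled to the right length, making the corner addable to $\rho$; conversely, any addable $i$-corner of $\rho$ contained in $\gamma$ must lie in $\beta$ by the same residue-matching argument. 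Consequently $\hat\rho\subseteq\beta$ and the residue-$i$ cells of $\beta/\rho$ are precisely what gets removed from $\beta/\rho$ to form $\beta/\hat\rho$. Horizontality of $\gamma/\hat\rho$ is inherited from $\gamma/\rho$, giving (asv1); condition (asv2) for $(\gamma/\beta,\hat\rho)$ at strip size $r-1$ follows from Remark~\ref{addonecore} applied to the addable $i$-corners of $\rho$, which increases $|\kbnd(\rho)|$ by one and decreases the strip parameter by one. For (asv3), residues $j\neq i$ in $\text{Res}(\beta/\hat\rho)$ inherit their condition from the old strip, and $i$ no longer appears.

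The principal obstacle is the Case-1 verification of (asv3) — the core-theoretic argument that rules out a second $\beta$-removable $(i+1)$-corner in $\beta/\rho$. I would handle it by a careful two-step application of Property~\ref{thecoreprop}, first to locate the two hypothetical $(i+1)$-corners of $\beta$ in strictly different rows with ordered column indices, then to show the resulting configuration forces a repeated column in $\gamma/\rho$, contradicting horizontality. The remaining verifications in both cases are then routine consequences of Proposition~\ref{iteratestrip}, Remark~\ref{addonecore}, and Property~\ref{iteratesvlemma}.
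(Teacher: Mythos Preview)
Your outline is the paper's proof: Case~1 via Proposition~\ref{iteratestrip} for (asv1)--(asv2) and the $j=i+1$ reduction together with a Property~\ref{thecoreprop}/horizontality contradiction for (asv3); Case~2 via $\hat\rho\subseteq\beta$ with the remaining conditions inherited. One small correction in Case~2: your (asv2) check invokes Remark~\ref{addonecore} on $\rho$, but $\rho$ is not assumed to be a core and (asv2) does not involve $|\kbnd(\rho)|$ at all --- what you actually need is $|\text{Res}(\beta/\hat\rho)|=m-1$, which follows directly from your identification of the $\rho$-addable $i$-corners with the residue-$i$ cells of $\beta/\rho$; the paper's cleaner route to $\hat\rho\subseteq\beta$ is simply that $\gamma$, being a core with a removable $i$-corner, has no addable $i$-corner, so every $\rho$-addable $i$-corner already lies in $\gamma$ and hence (since $i\notin\text{Res}(\gamma/\beta)$) in $\beta$.
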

\begin{proof}
Let $c(i)$ be the rightmost cell of $\gamma/\rho$.
If $c(i)\in\beta/\rho$ then $i\not\in Res(\gamma/\beta)$
by Corollary~\ref{distinctrescor1} and there are no 
$\gamma$-addable $i$-corners since
$c(i)$ is $\gamma$-removable by Property~\ref{iteratesvlemma} and 
$\gamma$ is a core.  Therefore, all addable $i$-corners of $\rho$ 
are in $\beta/\rho$ implying $\hat\rho\subseteq\beta$ and
thus that $(\gamma/\beta,\hat\rho)$ is
an affine s-v strip.

If $c(i)\in\gamma/\beta$ then Proposition~\ref{iteratestrip} implies
(asv1) and (asv2).
Now suppose there are $\beta$-removables, 
$c_1(j)\in\beta/\rho$ and $c_2(j)\not\in\beta/\rho$.
Since $(\gamma/\beta,\rho)$ is an affine s-v strip,
$c_2(j)$ must be $\gamma$-blocked. If, by 
contradiction, $c_2$ is not $\hat\gamma$-blocked
then $j=i+1$.
$c(i)$ lies at the end of its row in $\gamma$
implying all higher $i$-extremals are at the end of their row 
in $\gamma$ by Property~\ref{thecoreprop}.
However, by horizontality of $\gamma/\rho$, 
the cell left-adj to $c_1$ is an $i$-extremal.
\end{proof}

Now we are equipped to rephrase the definition of affine s-v tableaux.
For $\lambda\in\mathcal P^k$,
consider the set of pairs
obtained by adding affine s-v strips to $\core(\lambda)$,
$$
\mathcal H^{k}_{\lambda,r} = 
\{ (\mu,\rho): (\core(\mu)/\core(\lambda),\rho)
=\text{affine~set-valued~$r$-strip}
\}
\,.
$$

\begin{theorem}
\label{setvalued2}
For any $\lambda\in\mathcal P^k$ and
$k$-bounded composition $\alpha$,
there is a bijection between
$\mathcal T^k_\alpha(\lambda)$ and
$$
\left\{
(\emptyset,\emptyset)\subset
(\gamma^{(1)},\rho^{(1)})
\subseteq 
\cdots\subseteq 
(\gamma^{(\ell(\alpha))},\rho^{(\ell(\alpha))})
: (\kbnd(\gamma^{(x)}),\rho^{(x)})
\in \mathcal H^k_{\mathfrak p(\gamma^{(x-1)}),\alpha_x}
\right\}\,,
$$
where $\gamma^{(0)}=\emptyset$ and $\gamma^{(\ell(\alpha))}=\core(\lambda)$.
\end{theorem}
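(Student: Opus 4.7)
The plan is to construct the bijection stage by stage from the shape sequence of the tableau, using the structural results on affine strips and affine s-v strips (Propositions~\ref{iteratestrip} and~\ref{iteratesvstrip}, together with Properties~\ref{skewnonempty}, \ref{completeset}, and~\ref{iteratesvlemma}) as the technical engine. For the forward map $\Phi$, given $T\in\mathcal T^k_\alpha(\lambda)$ I set $\gamma^{(x)}=\shape(T_{\le \Sigma^x\alpha})$ for $0\le x\le\ell(\alpha)$, and let $B_x$ be the set of cells of $\gamma^{(x-1)}$ that acquire some additional letter of $\mathcal A_{\alpha,x}$ (the ``stacked'' cells of stage $x$); then $\rho^{(x)}=\gamma^{(x-1)}\setminus B_x$. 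Each $\gamma^{(x)}$ is a core by Definition~\ref{stdsetvalued}, and a direct argument shows that any cell of $B_x$ must already be a removable corner of $\gamma^{(x-1)}$ (being removable in an intermediate shape inside stage $x$ forces it to be removable in the smaller $\gamma^{(x-1)}$), so $\rho^{(x)}$ is a legitimate partition shape sitting inside $\gamma^{(x-1)}$.

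To verify that $(\gamma^{(x)}/\gamma^{(x-1)},\rho^{(x)})$ is an affine s-v $\alpha_x$-strip, (asv1) is precisely clause~(3) of Definition~\ref{setvalued}; (asv3) follows because the standard-tableau condition from Definition~\ref{stdsetvalued} forces a stage-$x$ letter into \emph{every} removable corner of its residue at the moment it is placed, so $B_x$ is automatically a union of complete residue classes of $\gamma^{(x-1)}$-removable corners that are not covered (blocked) by newly added cells of $\gamma^{(x)}$; and (asv2) comes from processing the letters of $\mathcal A_{\alpha,x}$ one at a time, applying Property~\ref{completeset} at each letter that adds a cell to track the hook count $|\kbnd(\gamma^{(x)})|-|\kbnd(\gamma^{(x-1)})|=\alpha_x-|Res(B_x)|$, and combining clause~(2) of Definition~\ref{setvalued} with Corollary~\ref{distinctrescor1} for the distinct-residue count. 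For the backward map $\Psi$, given a chain I process each stage $x$ by iterating Proposition~\ref{iteratesvstrip}: the rightmost cell of $\gamma^{(x)}/\rho^{(x)}$ identifies a residue $i$ and peels the triple to either $(\hat\gamma/\gamma^{(x-1)},\rho^{(x)})$ or $(\gamma^{(x)}/\gamma^{(x-1)},\hat\rho)$, stripping off one residue at a time until the strip is exhausted. I assign the letters of $\mathcal A_{\alpha,x}$ in decreasing order to these atomic peels, so the largest letter $\Sigma^x\alpha$ fills the first (rightmost) residue peeled; by Property~\ref{iteratesvlemma} each such letter then occupies exactly the removable corners of a fixed residue of the intermediate core, matching Definition~\ref{stdsetvalued}, while the three clauses of Definition~\ref{setvalued} follow respectively from the decreasing letter assignment, from Corollary~\ref{distinctrescor}, and from (asv1).

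The hard part will be showing that the reconstructed $T$ meets the standard affine s-v condition at \emph{every} intermediate integer $y\in\{1,\ldots,|\alpha|\}$, not only at the stage boundaries $\Sigma^x\alpha$, and that the peeling order inside a stage is canonically forced by the chain. Both issues are handled by Proposition~\ref{iteratesvstrip}: it guarantees both that the ``rightmost residue'' peel produces another affine s-v strip (so each intermediate shape remains a core) and that which of the two branches applies is dictated by the strip itself, so the atomic decomposition and therefore the letter placement is unambiguous. Once this is pinned down, $\Phi\circ\Psi=\mathrm{id}$ and $\Psi\circ\Phi=\mathrm{id}$ are immediate by inspection: $\Phi$ recovers the shape sequence $\gamma^{(x)}$ directly from $T_{\le\Sigma^x\alpha}$ and reads off $B_x=\gamma^{(x-1)}\setminus\rho^{(x)}$ from the stacking pattern, which is precisely the data that $\Psi$ has just inscribed.
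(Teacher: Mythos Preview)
Your overall strategy matches the paper's proof exactly: define $\gamma^{(x)}=\shape(T_{\le\Sigma^x\alpha})$ and $\rho^{(x)}=\gamma^{(x-1)}\setminus B_x$ in one direction, and in the other direction peel residues via Proposition~\ref{iteratesvstrip} while assigning letters of $\mathcal A_{\alpha,x}$ in decreasing order, invoking Property~\ref{iteratesvlemma} for the standard condition. The backward map $\Psi$ is fine.

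There is, however, a genuine gap in your verification of (asv3). You argue that the standard condition of Definition~\ref{stdsetvalued} places a stage-$x$ letter $y$ in every removable $i$-corner ``at the moment it is placed'', i.e.\ in $\tau^{(a)}=\shape(T_{\le y})$, and conclude that $B_x$ is a union of residue-classes of $\gamma^{(x-1)}$-removable corners modulo $\gamma^{(x)}$-blocking. But $\tau^{(a)}\supseteq\gamma^{(x-1)}$, so a $\gamma^{(x-1)}$-removable $i$-corner $\bar c$ need not be $\tau^{(a)}$-removable: an earlier stage-$x$ letter could sit \emph{right-adjacent} to $\bar c$ in $\tau^{(a)}$ without anything sitting \emph{above} $\bar c$ in $\gamma^{(x)}$. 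In that case $\bar c$ receives no stage-$x$ letter, is not $\gamma^{(x)}$-blocked, yet $c\in B_x$ has the same residue, violating (asv3). The paper rules out exactly this scenario by invoking clause~(1) of Definition~\ref{setvalued}: choosing $c$ to contain the lowest occurrence of $y$, one shows $\bar c$ must lie weakly lower (via Property~\ref{thecoreprop}), so the stage-$x$ letter right of $\bar c$ is smaller than $y$ and weakly lower than the lowest $y$, contradicting that the lowest reading word in $\mathcal A_{\alpha,x}$ is increasing. Your sketch never invokes clause~(1), so the argument as written does not close.

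A minor point: your appeal to Corollary~\ref{distinctrescor1} for the residue count in (asv2) is circular, since that corollary is stated for affine s-v strips, which is what you are establishing. The paper instead argues directly that each letter of $\mathcal A_{\alpha,x}$ is either everywhere lonely (hence lies entirely in $\gamma^{(x)}/\gamma^{(x-1)}$) or nowhere lonely (hence lies entirely in $B_x$), via Property~\ref{skewnonempty}; combined with clause~(2) this gives both (af2) and (af3) without circularity.
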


\begin{proof}
($\Leftarrow$):
For $x=1,\ldots,\ell(\alpha)$, we will construct $T$ by filling the cells 
of $\gamma^{(x)}/\rho^{(x)}$ iteratively as follows: start with 
$N=\Sigma^x\alpha$.  Let $i$ denote the residue of the 
rightmost cell in $\gamma^{(x)}/\rho^{(x)}$ 
that contains no letter larger than $N$.
Put letter $N$ in all cells of $\gamma^{(x)}/\rho^{(x)}$ with
residue $i$.
Let $N=N-1$ and repeat.

Since $\gamma^{(0)}=\emptyset$,
$\gamma^{(1)}$ is the row shape $(\alpha_1)$
and thus $T_{\leq \alpha_1}=
{\tiny{\tableau*[scY]{1&2&\cdots&\cdots &\alpha_1}}}$
is an affine s-v tableau of weight $(\alpha_1)$.
By induction, assume $T_{\leq \Sigma^{x-1}\alpha}$
is an affine s-v tableau of weight $(\alpha_1,\ldots,\alpha_{x-1})$.
Note that Conditions (1) and (3) on affine s-v tableaux
are met by construction since $\gamma^{(x)}/\rho^{(x)}$ is a horizontal 
strip.  Further, Corollary~\ref{distinctrescor} implies Condition (2).
It thus remains to show
that $T_{\leq\Sigma^x\alpha-j}$
is a set-valued tableau with $\Sigma^x\alpha-j$ in all removable 
corners of some residue and its shape $\tau^{(j)}$ is a core,
for $j=0,\ldots,\alpha_x-1$.

Let $\beta=\gamma^{(x-1)}$ and $\eta^{(0)}=\rho^{(x)}$.
Note that $\tau^{(0)}=\gamma^{(x)}$.  
By construction, there is 
an $N=\Sigma^x\alpha$ in all cells of $\tau^{(0)}/\eta^{(0)}$ with residue 
$i_0$, where $i_0$ is the residue of the rightmost cell $c$ in 
$\tau^{(0)}/\eta^{(0)}$.
Property~\ref{iteratesvlemma} implies there is a $N$ in 
exactly the $\tau^{(0)}$-removable $i_0$-corners.

Note that $\tau^{(1)}$ is $\tau^{(0)}$ minus cells containing a lonely $N$.
If $c\in\tau^{(0)}/\beta$ this is $\tau^{(0)}$ minus its $i_0$-corners
and otherwise $\tau^{(1)}=\tau^{(0)}$.  
In the later case, set $\eta^{(1)}=\eta^{(0)}$ plus its $i_0$-corners
and otherwise let $\eta^{(1)}=\eta^{(0)}$.
Proposition~\ref{iteratesvstrip} 
thus implies that $(\tau^{(1)}/\beta,\eta^{(1)})$ is an affine s-v strip 
and in particular, $\tau^{(1)}$ is a core.  By construction, there 
is a $N-1$ in all cells of $\tau^{(1)}/\eta^{(1)}$ with residue $i_1$, 
where $i_1$ is the residue of the rightmost cell
$c_1\in\tau^{(1)}/\eta^{(1)}$.
Thus, there is an $N-1$ in exactly the $\tau^{(1)}$-removable
$i_1$-corners by Property~\ref{iteratesvlemma}.
Iterating this argument proves the claim.

\smallskip

($\Rightarrow$)
Given $U\in \mathcal T^k_\alpha(\lambda)$,
let $\gamma^{(x)}=\shape(U_{\leq \Sigma^x\alpha})$ and $\rho^{(x)}$ 
be the shape obtained by deleting any {\it cell} containing an 
element of $\mathcal A_{\alpha,x}$ from $U_{\leq \Sigma^x\alpha}$,
for $x=1,\ldots,\ell(\alpha)$.
We claim that each $(\gamma^{(x)}/\gamma^{(x-1)},\rho^{(x)})$
is an affine set-valued $\alpha_x$-strip.

Let $T=U_{\leq \Sigma^x\alpha}$ and note that
\begin{itemize}
\item $\gamma^{(x)}/\rho^{(x)}$ are the cells in
$T$ containing a letter in $\mathcal A_{\alpha,x}$
\item $\gamma^{(x-1)}/\rho^{(x)}$ are the cells in
$T$ containing a letter in $\mathcal A_{\alpha,x}$ {\it and} 
a letter weakly smaller than $\Sigma^{x-1}\alpha$
\item $\gamma^{(x)}/\gamma^{(x-1)}$ are the 
cells in $T$ containing only letters larger than $\Sigma^{x-1}\alpha$.
\end{itemize}
The definition of affine s-v tableaux implies that
$\gamma^{(x)}/\rho^{(x)}$ is a horizontal strip and 
that $\gamma^{(x)}$ and $\gamma^{(x-1)}$ are cores.

To show that $\gamma^{(x)}/\gamma^{(x-1)}$ is
an affine $\alpha_x-|Res(\gamma^{(x-1)}/\rho^{(x)})|$-strip, 
we note it is a horizontal strip
since $\rho^{(x)}\subseteq\gamma^{(x-1)}$.
We claim that no letter lies in both
$\gamma^{(x)}/\gamma^{(x-1)}$ and $\gamma^{(x-1)}/\rho^{(x)}$
implying (af3)
since the $\alpha_x$ letters of $\mathcal A_{\alpha,x}$
each occupy a distinct residue and lie in $\gamma^{(x)}/\rho^{(x)}$.
To this end, suppose $y=a+\Sigma^{x-1}\alpha\in\gamma^{(x)}/\gamma^{(x-1)}$.
It must be lonely in $T_{\leq y}$
since $\gamma^{(x)}/\gamma^{(x-1)}$ has no letters
smaller than $\Sigma^{x-1}\alpha$ and no two letters of
$\mathcal A_{\alpha,x}$ share a cell.  The proof of
Property~\ref{skewnonempty} then implies that {\it all} $y$ 
are lonely in $T_{\leq y}$ and thus
{\it all} $y\in \gamma^{(x)}/\gamma^{(x-1)}$.
To verify (af2), let
$$
\tau^{(a)}=\shape(T_{\leq a+\Sigma^{x-1}\alpha})\,,
$$
for $a=1,\ldots, \alpha_x$.  Since any
$a+\Sigma^{x-1}\alpha\in\gamma^{(x)}/\gamma^{(x-1)}$
is lonely, 
$\tau^{(a)}\neq\tau^{(a-1)}$ for these $a$ and
$|\tau^{(a)}|=|\tau^{(a-1)}|+1$ by
Property~\ref{completeset}.
Otherwise, $|\tau^{(a)}|=|\tau^{(a-1)}|$
by definition of $\gamma^{(x-1)}/\rho^{(x)}$.

To prove Condition (asv3), note that
all cells of $\gamma^{(x-1)}/\rho^{(x)}$ are removable 
corners of $\gamma^{(x-1)}$ since
these cells contain both a letter larger and weakly 
smaller than $\Sigma^{x-1}\alpha$ and rows/columns are non-decreasing.
We thus must show that if
$c$ and $\bar c$ are removable $i$-corners of $\gamma^{(x-1)}$
where $c\in \gamma^{(x-1)}/\rho^{(x)}$ and
$\bar c \not\in\gamma^{(x-1)}/\rho^{(x)}$, then
$\bar c$ lies below a cell in $\gamma^{(x)}$.

Let $y=a+\Sigma^{x-1}\alpha$ denote the letter of $\mathcal A_{\alpha,x}$ 
in cell $c$ of $T$.
Then $c$ is a removable $i$-corner of the core $\tau^{(a)}$
and Definition~\ref{setvalued} implies there must be a $y$
in all $\tau^{(a)}$-removable $i$-corners. Since $\bar c$ contains 
no letter of $\mathcal A_{\alpha,x}$, it is not $\tau^{(a)}$-removable.
Therefore there is a letter $x\in\mathcal A_{\alpha,x}$
for some $x\leq y$ above or right-adj to $\bar c$.  In fact, $x<y$
since $y$ occupies only cells of residue $i$.
Assuming the later case, $\bar c$ must lie weakly lower than
$c$ since Property~\ref{thecoreprop} implies all $i$-extremals 
in $\tau^{(a)}$ that are higher than $c$ must lie at 
the end of their row (and all removable corners of $\gamma^{(x-1)}$ 
are extremal in $\gamma^{(x)}$ by horizontality).  If we choose $c$ 
to be the cell containing the lowest $y$,
there is a letter of $\mathcal A_{\alpha,x}$ that is
smaller and weakly lower than this $y$
contradicting that the lowest reading word of an affine s-v tableau is
increasing.
\end{proof}

\section{Affine Grothendieck polynomials}
\label{AGP}

Affine stable Grothendieck polynomials were introduced 
in \cite{[Lam]} in terms of the nil Hecke algebra.
Recall that the nil Hecke algebra $K$ for the type-$A$ affine 
Weyl group is generated over $\mathbb Z$ by $A_0,A_1,\ldots, A_{k}$
and relations
$$ A_i^2 = -A_i\;\text{ for all } i,
\quad
A_iA_j = A_jA_i\; \text{if}\; |i-j|> 2,
\quad
A_iA_{i+1}A_i = A_{i+1}A_iA_{i+1}
$$
where the indices are taken modulo $k+1$
\cite{[KK]}.
The algebra K is a free Z-module with basis
$\{A_w: w\in\tilde S_{k+1}\}$ where
$A_w = A_{i_1}\cdots A_{i_\ell}$ for any reduced word
${i_1}\cdots {i_\ell}$ of $w$.
In this basis, the multiplication is
given by
$$
A_iA_u =
\begin{cases}
A_{s_iu} & \text{ if }  \ell(s_iu) > \ell(u) \\
-A_u & \text{ if } \ell(s_iu) < \ell(u)
\end{cases}
$$

\smallskip

The definition of affine Grothendieck polynomials requires
elements defined by cyclically decreasing permutations.
To be precise, let $i_1\cdots i_\ell$ be a sequence of numbers
where each $i_r\in [0, k]$.
$i_1\cdots i_\ell$ is {\it cyclically decreasing} if 
no number is repeated and $j$ precedes $j-1$ (taken modulo $k+1$)
when both $j,j-1\in \{i_1,\cdots, i_\ell\}$.
If $i_1\cdots i_\ell$ is cyclically decreasing then we say
the permutation
$w = s_{i_1}\cdots s_{i_\ell}$ is cyclically decreasing.
Note that $w$ is reduced and
depends only on the set $\{i_1,\ldots,i_\ell\}$
of indices involved.
This given, consider
$$
h_i =\sum_{w\in \tilde S_{k+1}: \ell(w)=i\atop w~cyclically~decreasing} A_w
\,.
$$
Then, for any $w\in\tilde S_{k+1}$, the 
{\it affine stable Grothendieck polynomial} is defined by
\begin{equation}
\label{thomasdef}
G^{(k)}_w(x_1,x_2,\ldots) = 
\sum_{\alpha}
\langle h_{\alpha_{\ell}} h_{\alpha_{\ell-1}}
\cdots
h_{\alpha_1},A_w
\rangle
\,
x^\alpha
\,.
\end{equation}

\smallskip
We can explicitly describe the coefficients in this expression 
using certain factorizations of permutations.
Define an {\it $\alpha$-factorization} of $w$ 
to be a decomposition of the form 
$w=w^{\ell(\alpha)}\cdots w^1$ where $w^i$ is a cyclically decreasing 
permutation of length $\alpha_i$.  
From this viewpoint, the coefficient of $A_w$ in
$$
h_{\alpha_\ell}\cdots h_{\alpha_1}
= \sum_{\ell(w^\ell)=\alpha_\ell\atop
w^\ell~cyclically~dec}
A_{w^\ell}
\,
\cdots
\sum_{\ell(w^1)=\alpha_1\atop
w^1~cyclically~dec}
A_{w^1}
$$
is the signed enumeration of $\alpha$-factorizations of $w$.
Therefore,
\begin{equation}
\label{nilcoeffs}
G^{(k)}_w(x_1,x_2,\ldots) = 
\sum_{\alpha}
\,
(-1)^{|\alpha|-\ell(w)} 
\sum_{u=\alpha-factorization}
x^\alpha
\,.
\end{equation}

\smallskip

We give a bijection between affine s-v tableaux of shape 
$\lambda$ and weight $\alpha$ and {\it $\alpha$-factorizations} of 
$w_\lambda$.  From this, the stable affine Grothendieck 
polynomials indexed by grassmannian permutations are none other than 
generating functions for affine s-v tableaux.
One advantage of such an identification is that properties of
the tableaux given in prior sections immediately reveal basic 
facts about affine Grothendieck polynomials.

\smallskip

\begin{lemma}
\label{strip=cyclic}
Given a cyclically decreasing word $i_r\cdots i_1$,
let $T=\s_{i_r,1}\cdots\s_{i_1,1}(\beta)$ for any
core $\beta$. 
If the ones in $T$ occupy $r$ distinct residues,
then $(\gamma/\beta,\rho)$ is an affine s-v strip
for $\gamma=\shape(T)$ and $\rho$ the
shape obtained by deleting all ones from $T$.
\end{lemma}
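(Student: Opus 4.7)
The plan is to track the intermediate shapes and the locations of the $1$'s inserted at each step, then verify the three conditions of an affine s-v strip directly, invoking the cyclically decreasing hypothesis exactly where the local structure of a residue and a neighboring residue needs to be controlled. For $j=0,1,\ldots,r$, let $\beta^{(j)}=\shape(\s_{i_j,1}\cdots\s_{i_1,1}(\beta))$ (with $\beta^{(0)}=\beta$), and let $R_j$ denote the set of cells into which $\s_{i_j,1}$ inserts a $1$. Because a core never has both addable and removable corners of the same residue, and because the hypothesis that the $1$'s occupy $r$ distinct residues forces every step to actually place at least one $1$, the index set $\{1,\ldots,r\}$ partitions into $A$ (case (a), where $\s_{i_j,1}$ fills the addable $i_j$-corners of $\beta^{(j-1)}$, so $R_j=\beta^{(j)}\setminus\beta^{(j-1)}$) and $B$ (case (b), where it fills the removable $i_j$-corners, leaving $\beta^{(j)}=\beta^{(j-1)}$). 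All cells in $R_j$ have residue $i_j$, so the $R_j$ are pairwise disjoint; one then verifies $\gamma/\beta=\bigsqcup_{j\in A}R_j$ and $\beta/\rho=\bigsqcup_{j\in B}R_j$, using that a case-(b) cell of residue $i_j$ already lay in $\beta^{(j-1)}$ but could not have been added by any earlier $R_{j'}$ (different residue), so it must belong to $\beta$.

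For (asv1), I would argue by contradiction: suppose $(i,c)\in R_j$ and $(i+1,c)\in R_{j'}$ share a column. Their residues satisfy $i_{j'}=i_j-1$, and since both residues appear in the word the cyclically decreasing property forces $i_j-1$ to be applied before $i_j$, i.e.\ $j'<j$. However, $(i,c)\in R_j$ (whether case (a) or (b)) forces $(i+1,c)\notin\beta^{(j-1)}$, while $(i+1,c)\in R_{j'}\subseteq\beta^{(j')}\subseteq\beta^{(j-1)}$, a contradiction. For (asv2), iterating Remark~\ref{addonezerosi}(1) shows $\gamma$ is a core, Remark~\ref{addonecore} gives $|\kbnd(\gamma)|-|\kbnd(\beta)|=|A|$ (steps in $B$ leave the shape unchanged), and the residues occurring in $\gamma/\beta$ are precisely $\{i_j:j\in A\}$, so $\gamma/\beta$ is an affine $|A|$-strip; with $m=|Res(\beta/\rho)|=|B|$ this matches $r-m=|A|$.

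For (asv3), every cell of $R_j$ with $j\in B$ is a removable $i_j$-corner of $\beta^{(j-1)}\supseteq\beta$, and hence also of $\beta$, because the cells directly above and to its right, being absent from $\beta^{(j-1)}$, are absent from $\beta$. Conversely, given $c(i)\in\beta/\rho$ with $i=i_j$ and another $\beta$-removable $i$-corner $\bar c$ that is not $\gamma$-blocked, I would show $\bar c$ is a removable $i_j$-corner of $\beta^{(j-1)}$, hence lies in $R_j\subseteq\beta/\rho$. The cell above $\bar c$ is outside $\gamma\supseteq\beta^{(j-1)}$ by the non-blocking assumption. The cell right-adjacent to $\bar c$ has residue $i_j+1$, and if it were in $\beta^{(j-1)}\setminus\beta$ it would lie in $R_{j'}$ for some $j'\in A$ with $j'<j$, yielding $i_{j'}=i_j+1$; but cyclically decreasing forces $i_j+1$ to be applied after $i_j$, contradicting $j'<j$.

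The main obstacle is combinatorial bookkeeping rather than a deep step: arranging the notation so that the cyclically decreasing hypothesis can be invoked precisely at the two places where it is needed—comparing a residue $i_j$ with its successor $i_j-1$ in (asv1), and with its predecessor $i_j+1$ in (asv3)—each time yielding the wrong application order to produce a contradiction. Everything else reduces to the nesting $\beta\subseteq\beta^{(j)}\subseteq\gamma$, the disjointness of the $R_j$ by residue, and the standard facts about addable/removable corners in cores that are already in hand.
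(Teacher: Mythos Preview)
Your proof is correct and follows essentially the same route as the paper's argument: track the intermediate shapes $\beta^{(j)}$, separate the steps into addable versus removable cases, and invoke the cyclically decreasing hypothesis at exactly the two places it matters---comparing residue $i_j$ with $i_j-1$ to get horizontality of $\gamma/\rho$, and with $i_j+1$ to get the blocking clause of (asv3). The paper's version is a bit more telegraphic (it does not name the sets $R_j,A,B$ explicitly), but the logic and the key uses of the hypothesis are identical.
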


\begin{proof}
Since $\s_{i_x-1,1}$ is never applied after $\s_{i_x,1}$ 
by the definition of cyclically decreasing,
$\gamma/\rho$ is horizontal.  
Let $\beta^{(0)}=\beta$ and set
$\beta^{(x)}=\s_{i_x}(\beta^{(x-1)})$ for $x=1,\ldots,r$.
An element of $\beta/\rho$ arises only when
$\s_{i_{x+1}}$ is applied to $\beta^{(x)}$
and there is $\beta^{(x)}$-removable $i_{x+1}$-corner.
In this case, all $i_{x+1}$-corners of $\beta^{(x)}$
are $\beta$-removable
since $\beta^{(x)}/\beta$ has no $i_{x+1}$-residue.
Further, any cell $c'$ right-adj to 
a $\beta$-removable $i_{x+1}$-corner $c$
has residue $i_{x+1}+1$ and thus is not
in $\beta^{(x)}$ by definition of cyclically decreasing.
Therefore $c$ is either $\beta^{(x)}$-removable 
(and in $\beta/\rho$) or it is $\gamma$-blocked.

It thus remains to prove that $\gamma/\beta$ is an 
affine $r-m$ strip, where $m=|Res(\beta/\rho)|$.
Since the ones in $T$ occupy $r$ distinct residues,
$Res(\gamma/\rho)=\{i_1,\ldots,i_r\}$ and
$\beta^{(x-1)}$ has a removable or an addable $i_x$-corner.
Since $\{i_1,\ldots,i_r\}$ are distinct
and a core never has both an addable and removable
corner of the same residue, $Res(\beta^{(x)}/\beta)
\cap Res(\beta/\rho)=\emptyset$.
Therefore, by Remark~\ref{addonezerosi}(1),
$|\kbnd(\gamma)|=|\kbnd(\beta)|+r-m$.
\end{proof}

\smallskip

\begin{theorem}
\label{alphabij}
For $\lambda\in\mathcal P^k$,
there is a bijection between
$\mathcal T^{k}_\alpha(\lambda)$ and
the set of $\alpha$-factorizations for $w_\lambda$.  
\end{theorem}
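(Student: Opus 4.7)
The plan is to build the bijection one block at a time, using the strip decomposition from Theorem~\ref{setvalued2} on the tableau side and factoring $w_\lambda$ block by block on the affine Weyl group side. The engine is a one-step bijection between cyclically decreasing permutations $v$ of length $r$ (with any fixed bottom core $\beta$) and affine s-v $r$-strips $(\gamma/\beta,\rho)$ sitting above $\beta$, where $v$ corresponds to the pair obtained by applying its cyclically decreasing $\s_{i,1}$-operators to $\beta$.

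First I would establish this one-step bijection. The forward direction is exactly Lemma~\ref{strip=cyclic}: a cyclically decreasing word has distinct residues, so the hypothesis of that lemma is automatic. For the converse, given an affine s-v $r$-strip $(\gamma/\beta,\rho)$, Corollary~\ref{distinctrescor} yields $|Res(\gamma/\rho)|=r$, so there is a unique cyclically decreasing permutation $v$ whose residue set equals $Res(\gamma/\rho)$. I would then verify that applying $v$'s operators to $\beta$ reproduces precisely $(\gamma,\rho)$. The shape $\gamma$ follows by induction on the length of the cyclically decreasing word, using Remark~\ref{addonezerosi}(1) to track which residues genuinely add cells versus pass through an already-removable corner of $\beta$; the pair $\rho$ is then forced by condition (asv3) together with the definition of the $\s_i$-operators. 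With the one-step bijection in hand, I would iterate: given an $\alpha$-factorization $w_\lambda=w^{\ell(\alpha)}\cdots w^1$, set $\gamma^{(0)}=\emptyset$ and let $(\gamma^{(x)},\rho^{(x)})$ be the pair produced by applying $w^x$'s operators to $\gamma^{(x-1)}$. Each $(\gamma^{(x)}/\gamma^{(x-1)},\rho^{(x)})$ is an affine s-v $\alpha_x$-strip by the one-step correspondence, and since $w^{\ell(\alpha)}\cdots w^1=w_\lambda$, the Coxeter relations promoted to $\s_i$-relations in Remark~\ref{addonezerosi}(4), combined with $\s_i^2=\s_i$, force $\gamma^{(\ell(\alpha))}=\mathfrak{t}(w_\lambda)=\core(\lambda)$. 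Theorem~\ref{setvalued2} then packages this chain into an element of $\mathcal{T}^k_\alpha(\lambda)$.

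The inverse direction reverses the same steps: Theorem~\ref{setvalued2} decomposes a tableau $T\in\mathcal{T}^k_\alpha(\lambda)$ into a chain of affine s-v strips, the one-step correspondence converts each strip into a cyclically decreasing $w^x$ of length $\alpha_x$, and Proposition~\ref{bij} together with Remark~\ref{addonezerosi} guarantees that the resulting product $w^{\ell(\alpha)}\cdots w^1$ equals $w_\lambda$ because the concatenated $\s_i$-operators carry $\emptyset$ to $\core(\lambda)$. The two constructions are manifestly inverse block-by-block. The main obstacle is the converse half of the one-step bijection: showing that the cyclically decreasing permutation attached to $Res(\gamma/\rho)$ actually reproduces the specific pair $(\gamma,\rho)$ and not merely some other affine s-v strip with the same residue set. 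Controlling this requires tracking, at each step in the cyclic-decreasing order, which residues in $Res(\beta/\rho)$ correspond to truly $\beta$-removable corners versus those blocked by $\gamma$, and here Property~\ref{thecoreprop} is the essential tool to rule out inconsistencies between the order of operator application and the geometry of addable versus removable corners.
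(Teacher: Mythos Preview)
Your approach is essentially the paper's: both directions pass through Theorem~\ref{setvalued2}, with Lemma~\ref{strip=cyclic} driving the forward step and Proposition~\ref{bij} identifying the product with $w_\lambda$. One small slip: the hypothesis of Lemma~\ref{strip=cyclic} (that the ones occupy $r$ distinct residues) is \emph{not} automatic from the word being cyclically decreasing. If some intermediate core has neither an addable nor a removable $i_j$-corner, then $\s_{i_j,1}$ writes nothing and the ones cover fewer than $r$ residues. The paper fills this gap via Remark~\ref{hasaddableorremovable}, which applies precisely because the concatenated word lies in $\mathcal W(w_\lambda)$; you should invoke it explicitly rather than claim the hypothesis is free.

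For the reverse direction, the paper sidesteps your ``main obstacle'' entirely by not routing through strips. Given $T\in\mathcal T^k_\alpha(\lambda)$ (by definition a standard affine s-v tableau of degree $|\alpha|$), it reads off the residue $j_a$ of each letter $a$ and sets $w^x=s_{j_{\Sigma^x\alpha}}\cdots s_{j_{1+\Sigma^{x-1}\alpha}}$. Proposition~\ref{bij} immediately gives $w^\ell\cdots w^1=w_\lambda$, and the only remaining task is to show each word $j_{\Sigma^x\alpha}\cdots j_{1+\Sigma^{x-1}\alpha}$ is cyclically decreasing. This is done directly from the three conditions in Definition~\ref{setvalued} (distinct residues, increasing lowest reading word, horizontal strip) together with Property~\ref{thecoreprop}. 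That route avoids proving a converse to Lemma~\ref{strip=cyclic}, and it makes the inverse property transparent: the forward map is the letter-by-letter $\mathfrak s$ of Proposition~\ref{bij}, and the reverse map reads residues letter-by-letter. Your strip-based inverse works too, but you would indeed have to carry out the argument you flagged; the paper's organization buys you that step for free.
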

\begin{proof}
($\Leftarrow$)
Consider an $\alpha$-factorization $w_\lambda=w^\ell w^{\ell-1}\cdots w^1$
and let $i_{\Sigma^x\alpha}\cdots i_{\Sigma^{x-1}\alpha+1}$
be a cyclically decreasing word for $w^{x}$,
for each $x=1,\ldots,\ell=\ell(\alpha)$.
From this, iteratively
construct $U^{(x)} = \mathfrak s_{i_{\Sigma^x\alpha},x}\cdots 
\mathfrak s_{i_{\Sigma^{x-1}\alpha+1},x}
(U^{(x-1)})$.  Let $\gamma^{(x)} = \shape(U^{(x)})$ and 
let $\rho^{(x)}$ be
the shape of $U^{(x)}$ minus its cells containing the letter $x$.
By Theorem~\ref{setvalued2}, it suffices to show that
$(\gamma^{(x)}/\gamma^{(x-1)},\rho^{(x)})$ is an
affine s-v $\alpha_x$-strip.

Since $w_\lambda$ is grassmannian and $w^1$ is
cyclically decreasing, $w^1=s_{\alpha_1-1}\cdots s_0$.  
Thus the result holds for $x=1$ since
$\gamma^{(1)}=\s_{\alpha_1-1}\cdots \s_0\emptyset$
is horizontal and $\rho^{(1)}=\emptyset$.  
By induction,
$\gamma^{(\ell-1)}$ is a core and by Remark~\ref{hasaddableorremovable},
the letter $\ell$ in $U^{(\ell)}$ occupies $\alpha_\ell$ distinct 
residues.  The result then follows by applying
Lemma~\ref{strip=cyclic} since
$\gamma^{(\ell)}= \s_{i_{\Sigma^\ell\alpha}}\cdots 
\s_{i_{\Sigma^{\ell-1}\alpha+1}}\gamma^{(\ell-1)}$.

\smallskip

\noindent ($\Rightarrow$):
Given $T\in \mathcal T^k_{\alpha}(\lambda)$,  let
$w^x= s_{j_{\Sigma^x\alpha}}\cdots s_{j_{\Sigma^{x-1}\alpha+1}}$ 
where $j_a$ denotes the 
residue of letter $a$ in $T$, for $x=1,\ldots,\ell(\alpha)$.  
Proposition~\ref{bij} implies that $w_\lambda = w^\ell\cdots w^1$
and it remains to show that $w^x$ is cyclically decreasing.

The letters of $\mathcal A_{\alpha,x}$ occupy residues
$\mathcal S= \{j_{\Sigma^{x-1}\alpha+1},\ldots,j_{\Sigma^x\alpha}\}$.
The definition of affine s-v tableau implies residues in $\mathcal S$ 
are distinct, the lowest reading word of $\mathcal A_{\alpha,x}$ is 
increasing, and $\gamma/\rho$ is horizontal, where 
$\gamma=\shape(T_{\leq \Sigma^x\alpha})$
and $\rho$ is the shape of $T_{\leq \Sigma^x\alpha}$ minus
cells containing an element of $\mathcal A_{\alpha,x}$.
Suppose $i,i-1\in\mathcal S$ and $i-1$ precedes $i$ in 
$j_{\Sigma^x\alpha}\cdots j_{\Sigma^{x-1}\alpha+1}$.
Then there are letters $t_1(i)$ and $t_2(i-1)$ in
$\mathcal A_{\alpha,x}$ where $t_1<t_2$.
Since the lowest reading word is increasing, the lowest
$t_2$ occurs to the right of $t_1$ and
they do not lie in the same row since $\alpha_x\leq k$.
Further, there is no element of $\mathcal A_{\alpha,x}$ 
right-adj to $t_2(i-1)$ since this could only be $t_1(i)$.
Therefore, all extremals of residue $i-1$ lie at the
end of their row by Property~\ref{thecoreprop}.  However, $t_1(i)$ is right-adj
to an extremal of residue $i-1$ by the horizontality
of $\gamma/\rho$.
\end{proof}

\begin{corollary}
\label{thomas=me}
For any $\lambda\in\mathcal P^k$, 
\begin{equation}
\label{grotgen}
G_{\lambda}^{(k)} =  \sum_{T\in \mathcal T^k(\lambda)}
(-1)^{|\lambda|+|w(T)|}\,
x^{w(T)}\,,
\end{equation}
where $G^{(k)}_\lambda = G^{(k)}_{w_\lambda}$. 
\end{corollary}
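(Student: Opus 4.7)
The plan is to simply combine the three pieces that have already been established: the expansion \eqref{nilcoeffs} of $G^{(k)}_w$ as a signed sum over $\alpha$-factorizations of $w$, the bijection from Theorem~\ref{alphabij} between $\alpha$-factorizations of $w_\lambda$ and affine s-v tableaux in $\mathcal T^k_\alpha(\lambda)$, and the length identity $\ell(w_\lambda)=|\lambda|$ coming from Remark~\ref{addonezerosi}(2) (since $w_\lambda=\mathfrak t^{-1}(\core(\lambda))$ corresponds to a standard $k$-tableau on $|\lambda|$ letters).

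First I would specialize \eqref{nilcoeffs} to $w=w_\lambda$, getting
\begin{equation*}
G^{(k)}_{w_\lambda}(x_1,x_2,\ldots) = \sum_{\alpha}(-1)^{|\alpha|-\ell(w_\lambda)}\,\Bigl(\,\sum_{u=\alpha\text{-factorization of }w_\lambda} 1\Bigr)\,x^{\alpha}.
\end{equation*}
Next I would apply Theorem~\ref{alphabij} to replace the inner sum by $|\mathcal T^k_\alpha(\lambda)|$, obtaining
\begin{equation*}
G^{(k)}_{w_\lambda} = \sum_{\alpha}(-1)^{|\alpha|-\ell(w_\lambda)}\,|\mathcal T^k_\alpha(\lambda)|\,x^{\alpha} = \sum_{T\in\mathcal T^k(\lambda)}(-1)^{|w(T)|-\ell(w_\lambda)}\,x^{w(T)},
\end{equation*}
where the last equality uses that every $T\in\mathcal T^k(\lambda)$ has a well-defined weight $w(T)=\alpha$ with $|w(T)|=|\alpha|$.

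Finally I would invoke $\ell(w_\lambda)=|\lambda|$, so that $(-1)^{|w(T)|-\ell(w_\lambda)}=(-1)^{|w(T)|+|\lambda|}$, yielding \eqref{grotgen} with $G^{(k)}_\lambda:=G^{(k)}_{w_\lambda}$. There is essentially no obstacle here: this corollary is purely an assembly step, since Theorem~\ref{alphabij} has already done the combinatorial work of matching affine s-v tableaux to the factorizations that index \eqref{nilcoeffs}, and the sign identification reduces to the length formula from Remark~\ref{addonezerosi}(2).
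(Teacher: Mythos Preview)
Your proof is correct and follows exactly the approach the paper intends: the corollary is stated without explicit proof precisely because it is an immediate assembly of \eqref{nilcoeffs}, Theorem~\ref{alphabij}, and the length identity $\ell(w_\lambda)=|\lambda|$ from Remark~\ref{addonezerosi}(2), which is just what you have written out.
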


This interpretation for the  $G_\lambda^{(k)}$
allows us to establish a number of properties using our
results on affine s-v tableaux.  To start,
Mark Shimozono conjectured that affine Grothendieck polynomials 
for the grassmannian reduce to Grothendieck polynomials in limiting
cases of $k$.  In fact, we find precisely that

\begin{property}
\label{Gkinfty}
If $h(\lambda)\leq k$ then $G^{(k)}_\lambda = G_\lambda$.
\end{property}
\begin{proof}
Proposition~\ref{propsvkt1} tells us that the elements of
$\mathcal T^k(\lambda)$ are set-valued tableaux of 
shape $\lambda$ when $h(\lambda)\leq k$.
The result thus follows from
Corollary~\ref{thomas=me} and the definition 
for Grothendieck polynomials.
\end{proof}

It was shown in \cite{[Lam]} that $G^{(k)}_w$ of \eqref{thomasdef} 
are symmetric functions.  Thus, letting
$$
\mathcal K_{\lambda\alpha}^{(k)} = 
\left|\mathcal T^k_\alpha(\lambda)\right|\,
$$
enumerate the affine s-v tableaux,
we deduce a symmetry of this affine K-theoretic 
refinement of the Kostka numbers from Corollary~\ref{thomas=me}.
\begin{corollary}
\footnote{
A direct combinatorial proof of this
symmetry will appear in \cite{[BM]} using an involution 
from the set of $\alpha$-factorizations of $w_\lambda$
to the set of $\hat\alpha$-factorizations,
where $\hat\alpha$ is obtained by transposing two adjacent components 
of $\alpha$.   The involution generalizes the 
Lascoux-Sch\"utzenberger symmetric group action on words.}
\label{corosym}
Given any $\lambda\in\mathcal P^k$ and
$k$-bounded composition $\alpha$,
$$
\mathcal K_{\lambda\alpha}^{(k)}=
\mathcal K_{\lambda\beta}^{(k)}\,\quad
$$
for any rearrangement $\beta$ of $\alpha$.
\end{corollary}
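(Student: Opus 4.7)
The plan is to extract the symmetry of the refined Kostka numbers directly from the symmetry of $G_\lambda^{(k)}$ established in \cite{[Lam]}, using the tableaux-based formula from Corollary~\ref{thomas=me} as the bridge. This reduces the claim to a one-line comparison of monomial coefficients.

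First, I would regroup the sum in Corollary~\ref{thomas=me} by weight composition. Writing
\begin{equation}
G_\lambda^{(k)} \;=\; \sum_{T\in\mathcal T^k(\lambda)} (-1)^{|\lambda|+|w(T)|}\, x^{w(T)}
\;=\; \sum_{\alpha} (-1)^{|\lambda|+|\alpha|}\, \mathcal K^{(k)}_{\lambda\alpha}\, x^\alpha,
\end{equation}
where the outer sum ranges over $k$-bounded compositions $\alpha$ and I have used that $|w(T)|=|\alpha|$ whenever $T\in\mathcal T^k_\alpha(\lambda)$. Thus the coefficient of the monomial $x^\alpha$ in $G_\lambda^{(k)}$ is exactly $(-1)^{|\lambda|+|\alpha|}\,\mathcal K^{(k)}_{\lambda\alpha}$.

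Next I would appeal to the fact, proved in \cite{[Lam]}, that $G^{(k)}_{w_\lambda}=G^{(k)}_\lambda$ is a symmetric function in $x_1,x_2,\ldots$. Symmetry implies that for any rearrangement $\beta$ of $\alpha$, the coefficients of $x^\alpha$ and $x^\beta$ coincide. Since $|\alpha|=|\beta|$, the signs $(-1)^{|\lambda|+|\alpha|}$ and $(-1)^{|\lambda|+|\beta|}$ agree, and we may cancel them to obtain $\mathcal K^{(k)}_{\lambda\alpha}=\mathcal K^{(k)}_{\lambda\beta}$, as desired.

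There is really no obstacle here beyond assembling the pieces: the combinatorial content (the bijection with $\alpha$-factorizations) was done in Theorem~\ref{alphabij}, Corollary~\ref{thomas=me} packages it into a generating function identity, and the symmetry input from \cite{[Lam]} does the rest. The only mild subtlety is verifying that the sign depends only on $|\alpha|$ and not on the finer structure of $\alpha$, which is immediate from the formula. A combinatorial proof avoiding the geometric/nilHecke input is noted as forthcoming in \cite{[BM]}, but for this corollary the symmetry of $G_\lambda^{(k)}$ is exactly the right hammer.
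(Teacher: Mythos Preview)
Your proof is correct and is essentially the same as the paper's: the paper deduces the corollary directly from the symmetry of $G^{(k)}_w$ established in \cite{[Lam]} together with the generating function identity of Corollary~\ref{thomas=me}, exactly as you do.
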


The affine Grothendieck polynomial can then be written as,
for $\lambda\in\mathcal P^k$, 
\begin{equation}
\label{grotinm}
G_\lambda^{(k)} = \sum_{\mu\in\mathcal P^k} 
(-1)^{|\lambda|+|\mu|}\,
\mathcal K^{(k)}_{\lambda\mu} \, m_\mu\,.
\end{equation}

Our earlier result showing affine s-v tableaux are simply
$k$-tableaux in certain cases also enables us to refine
this expansion and connect affine Grothendieck polynomials
to dual $k$-Schur functions.

\begin{property}
\label{trikostka}
For any $k$-bounded partitions $\lambda$ and $\mu$,
\begin{equation}
\mathcal K_{\lambda\mu}^{(k)}=
\begin{cases}
1 \quad \text{when}\quad \mu=\lambda\\
0 \quad\text{when}\quad |\mu|=|\lambda| \;\;\text{and}\;\;
\lambda \ntrianglerighteq \mu
\\
0 \quad\text{when}\quad |\mu|<|\lambda|
\end{cases}
\end{equation}
\end{property}
\begin{proof}
Consider an affine s-v tableau $T$ of shape $\core(\lambda)$ and
weight $\mu$.  We have that $|\lambda|\geq |\mu|$ by
Corollary~\ref{tridegree}.  Further, Proposition~\ref{propsvkt2} implies 
that $T$ is a $k$-tableau when $|\mu|=|\lambda|$,
in which case the desired relation follows from \eqref{unikkostka}.
\end{proof}

We thus have the unitriangularity relation:
\begin{equation}
\label{grotinmtri}
G_\lambda^{(k)} =
m_\lambda + 
\sum_{\mu\in\mathcal P^k\atop\mu\lhd\lambda}
\mathcal K_{\lambda\mu}^{(k)}\, m_\mu
\,+\,
\sum_{\mu\in\mathcal P^k\atop |\mu|>|\lambda|}
(-1)^{|\lambda|+|\mu|}\,
\,\mathcal K_{\lambda\mu}^{(k)}\, m_\mu
\,,
\end{equation}
and it follows immediately that these
affine Grothendieck polynomials are a basis.

\begin{property}
\label{grotbas}
$\{G_\lambda^{(k)}\}_{\lambda\in\mathcal P^k}$ is a basis for 
$\Lambda/\mathcal I^{k}$.
\end{property}

In analogy to \eqref{grothins}, we also deduce
that the dual $k$-Schur expansion of an affine
Grothendieck polynomial has integer coefficients
and the polynomial made up of the lowest homogeneous degree 
terms is precisely a dual $k$-Schur function.

\begin{property}
\label{grotindualks}
For any $k$-bounded partition $\lambda$,
\begin{equation}
G_\lambda^{(k)} \quad =\quad 
\mathfrak S_\lambda^{(k)} + \sum_{\mu\in\mathcal P^k\atop|\mu|>|\lambda|} 
a_{\lambda\mu}^k\,\mathfrak S_\mu^{(k)}
\qquad\text{for}\quad a_{\lambda\mu}^k\in\mathbb Z\,.
\end{equation}
\end{property}
\begin{proof}
The bottom degree terms of expression \eqref{grotinmtri} 
matches the monomial expansion \eqref{dualksenm} for
the dual $k$-Schur function $\mathfrak S_\lambda^{(k)}$
since $\mathcal K_{\lambda\mu}^{(k)}=K_{\lambda\mu}^{(k)}$
when $|\mu|=|\lambda|$ by Proposition~\ref{propsvkt2}. 
The higher degree terms involve $m_\mu\in
\Lambda/\mathcal I^k$ and can thus
be expanded into the $\{\mathfrak S_\lambda^{(k)}\}$-basis.
The coefficient remain integral by the unitriangularity
of expansion \eqref{dualksenm}.
\end{proof}

On one hand, as their name suggests, the affine Grothendieck
polynomials can be viewed as an affine analog of the Grothendieck 
polynomials.   At a fundamental level, because the expansion 
coefficients in \eqref{grothins} are in fact positive (up to a
degree-alternating sign), Thomas Lam conjectured the
same about the coefficients $a_{\lambda\mu}^k$.  In the same
vein, it was proven in \cite{[Lenart]} that  the coefficients in
\begin{equation}
s_\lambda = \sum_\mu f_{\lambda\mu}\, G_\mu
\end{equation}
have a simple combinatorial interpretation as
the number of certain restricted skew tableaux.
Evidence suggests that the affine analog of this
identity is also positive.
\begin{conjecture}
For any $k$-bounded partition $\lambda$,
the coefficients $f_{\lambda\mu}^{k}$ in
\begin{equation}
\mathfrak S^{(k)}_\lambda = \sum_{\mu\in\mathcal P^k\atop |\mu|\geq|\lambda|}
f_{\lambda\mu}^{k}\, G_\mu^{(k)}  
\end{equation}
are non-negative integers.  
\end{conjecture}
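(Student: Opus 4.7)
The plan is to prove the conjecture by first passing, via Hall inner product duality, to an equivalent statement about the $k$-Schur expansion of $k$-$K$-Schur functions, and then establishing that expansion through a Pieri-based induction backed by an explicit combinatorial model. Using $\langle s_\lambda^{(k)}, \mathfrak S_\mu^{(k)} \rangle = \delta_{\lambda\mu}$ and $\langle g_\lambda^{(k)}, G_\mu^{(k)} \rangle = \delta_{\lambda\mu}$, pair the identity $\mathfrak S_\lambda^{(k)} = \sum_\mu f_{\lambda\mu}^k G_\mu^{(k)}$ against $g_\nu^{(k)}$ to obtain $f_{\lambda\nu}^k = [s_\lambda^{(k)}]\, g_\nu^{(k)}$. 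Thus the conjecture is equivalent to the $k$-Schur positivity of each $k$-$K$-Schur function: $g_\nu^{(k)}$ expands with non-negative integer coefficients in the basis $\{s_\lambda^{(k)}\}$. This parallels the classical Schur positivity of the dual Grothendieck polynomials $g_\lambda$, which in the finite case follows from Lenart's theorem.

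Next, applying the dual of Property~\ref{grotindualks} gives $g_\mu^{(k)} = s_\mu^{(k)} + \sum_{|\lambda|<|\mu|} f_{\lambda\mu}^k\, s_\lambda^{(k)}$, settling the top-degree coefficient at $1$. For the lower-degree coefficients, I would construct an explicit finite set $\mathcal R_{\lambda\mu}^k$ of restricted fillings on the skew $(k+1)$-core shape $\core(\mu)/\core(\lambda)$, modeled on Lenart's restricted skew tableaux: strict increase in columns, weak increase in rows, and bounds on entries tied to residue and row-index. The conditions would be tuned so that, in the stable range $h(\mu)\leq k$, Proposition~\ref{propsvkt1} and Property~\ref{Gkinfty} force $\mathcal R_{\lambda\mu}^k$ to collapse onto Lenart's tableaux, yielding both a sanity check and a stability base case. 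To prove $|\mathcal R_{\lambda\mu}^k| = f_{\lambda\mu}^k$, I would induct on $|\mu|-|\lambda|$: multiply a predecessor expansion $g_{\mu'}^{(k)} = \sum_\eta |\mathcal R_{\eta\mu'}^k|\, s_\eta^{(k)}$ by $g_r^{(k)}$, apply the Pieri rule \eqref{kpieriintro} on the left, apply the classical $k$-Schur Pieri rule of \cite{[LMproofs]} on the right, and match coefficients.

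The main obstacle will be engineering the cancellation in this last step. The alternating signs $(-1)^{r+|\lambda|-|\mu|}$ in \eqref{kpieriintro} must cancel against signed contributions from the triangular inversion required to express lower-degree $g_{\mu''}^{(k)}$ in $k$-Schurs, leaving a purely non-negative remainder indexed by $\mathcal R_{\lambda\mu}^k$. In the finite case this is achieved by a sign-reversing involution on pairs (tableau, Pieri datum); the affine analog must respect both $(k+1)$-core geometry and the residue bookkeeping governing affine set-valued strips, and designing such an involution is the heart of the problem. A parallel route is to exploit the geometric interpretation of Lam, Schilling, and Shimozono \cite{[LSS]}, wherein $g_\mu^{(k)}$ and $s_\lambda^{(k)}$ are Schubert classes in the $K$-homology and homology of the affine Grassmannian respectively; the desired non-negativity would then follow from a positivity principle for the expansion of $K$-homology Schubert classes in homology Schubert classes, in the spirit of Brion--Kumar--Anderson--Griffeth--Miller type theorems adapted to the affine Grassmannian.
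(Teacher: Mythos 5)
This statement is one of the paper's open conjectures: the paper offers no proof of it, so there is nothing to match your argument against, and the only question is whether your proposal itself constitutes a proof. It does not, though its first step is correct and worthwhile. Pairing $\mathfrak S_\lambda^{(k)}=\sum_\mu f^k_{\lambda\mu}G^{(k)}_\mu$ against $g^{(k)}_\nu$ using the two dualities $\langle s_\lambda^{(k)},\mathfrak S_\mu^{(k)}\rangle=\delta_{\lambda\mu}$ and $\langle g_\lambda^{(k)},G_\mu^{(k)}\rangle=\delta_{\lambda\mu}$ (only the finitely many $\mu$ with $|\mu|\le|\nu|$ pair nontrivially, so the a priori infinite expansion causes no trouble) does give $f^k_{\lambda\nu}=\langle g^{(k)}_\nu,\mathfrak S^{(k)}_\lambda\rangle$, i.e.\ $f^k_{\lambda\nu}$ is the coefficient of $s^{(k)}_\lambda$ in $g^{(k)}_\nu$. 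Together with Property~\ref{grotkschur}, this settles integrality and the leading coefficient, and shows the conjecture is equivalent to item (4) of the list of Lam's conjectures at the start of the $k$-$K$-Schur section --- which is exactly the item the paper pointedly does \emph{not} prove (it proves only (1)--(3)). So your reduction is a clean restatement of the conjecture, not progress toward its resolution.

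Everything after that reduction is a program rather than an argument, and the gaps are precisely where the difficulty lives. The set $\mathcal R^k_{\lambda\mu}$ is never defined: ``conditions would be tuned'' is a placeholder for the missing content, and Lenart's restricted-tableau conditions \cite{[Lenart]} do not transport naively to skew $(k+1)$-core shapes, where residues repeat and the bounds on entries have no evident analog. The Pieri induction cannot close on its own: the affine rule \eqref{kpierieq} carries the signs $(-1)^{r+|\lambda|-|\mu|}$ while the $k$-Schur Pieri rule of \cite{[LMproofs]} is sign-free, so matching coefficients yields signed identities, and non-negativity requires exactly the sign-reversing involution you defer --- nothing in the paper's toolkit (affine s-v strips, the involution of Property~\ref{minvo}) is shown to produce it, and constructing it is the whole problem. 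The geometric fallback is likewise not citable: \cite{[LSS]} identifies $g^{(k)}_\lambda$ with Schubert classes in the $K$-homology of the affine Grassmannian, but there is no established positivity theorem for expanding $K$-homology Schubert classes into homology Schubert classes of this infinite-dimensional space; the finite-dimensional positivity results you allude to concern structure constants or finite flag varieties and do not apply. In sum, your proposal converts one open conjecture into an equivalent open conjecture plus two unexecuted plans, so the statement remains unproven.
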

On the other hand, instead viewing affine Grothendieck polynomials as the
$K$-theoretic analog of dual $k$-Schur functions suggests
that these polynomials satisfy even more refined combinatorial properties.
For example, it is proven in the forthcoming paper 
\cite{[LLMS2]} that the coefficients in 
\begin{equation}
\mathfrak S_\lambda^{(k+1)} =
\sum_{\mu\in\mathcal P^k}
a_{\lambda,\mu}^{k+1,k}\, \mathfrak S_\mu^{(k)}  
{\mod \mathcal I^k} 
\end{equation}
are non-negative integers.  Since a dual $k$-Schur function reduces 
to a Schur function for large $k$, this expression can be
iterated to imply the positivity of coefficients in
\begin{equation}
\label{sindual}
s_\lambda = \sum_{\mu\in\mathcal P^k}
a_{\lambda,\mu}^k \,\mathfrak S_\mu^{(k)} 
\mod \mathcal I^k\,,
\end{equation}
for any $k>0$.
Naturally following suite in our setting leads to
the $K$-theoretic version of these ideas.

\begin{conjecture}
For any $k+1$-bounded partition $\lambda$,
the coefficients $d_{\lambda\mu}^{k+1,k}$ in
\begin{equation}
\label{GinG}
G_\lambda^{(k+1)} = \sum_{\mu\in\mathcal P^k\atop |\mu|\geq|\lambda|}
(-1)^{|\lambda|+|\mu|}\,
d_{\lambda\mu}^{k+1,k}\, G_\mu^{(k)}  
\mod \mathcal I^k
\end{equation}
are non-negative integers.  By Property~\ref{Gkinfty} this 
implies
\begin{equation}
\label{sinG}
G_\lambda = \sum_{\mu\in\mathcal P^k\atop |\lambda|\leq |\mu|} 
(-1)^{|\lambda|+|\mu|}\,d_{\lambda\mu}^k\,
G^{(k)}_\mu\,\quad\text{where}\quad d_{\lambda\mu}^k\in\mathbb N\,.
\end{equation}
\end{conjecture}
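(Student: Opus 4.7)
The plan is to reduce the statement to the corresponding \emph{homogeneous} branching theorem for dual $k$-Schur functions proved in \cite{[LLMS2]} and then to lift that result across degrees using the unitriangularity of Property~\ref{grotindualks}. First I would expand $G_\lambda^{(k+1)}$ via Property~\ref{grotindualks} as
\begin{equation*}
G_\lambda^{(k+1)} \;=\; \mathfrak{S}_\lambda^{(k+1)} + \sum_{\nu : |\nu|>|\lambda|} a_{\lambda\nu}^{k+1}\, \mathfrak{S}_\nu^{(k+1)},
\end{equation*}
with $a_{\lambda\nu}^{k+1}\in\mathbb{Z}$. Term by term I would then apply the branching of \cite{[LLMS2]},
\begin{equation*}
\mathfrak{S}_\nu^{(k+1)} \;\equiv\; \sum_{\mu\in \mathcal{P}^k} a_{\nu,\mu}^{k+1,k}\, \mathfrak{S}_\mu^{(k)} \pmod{\mathcal{I}^k},
\end{equation*}
whose coefficients $a_{\nu,\mu}^{k+1,k}$ are known to lie in $\mathbb{N}$. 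Finally I would invert Property~\ref{grotindualks} at level $k$ to rewrite each $\mathfrak{S}_\mu^{(k)}$ as an integer combination of the $\{G_\rho^{(k)}\}_{\rho \in \mathcal{P}^k}$. Tracking degrees together with the truncation $\bmod\,\mathcal{I}^k$ yields the required expansion; the task is to check that the combined coefficient of $G_\rho^{(k)}$ carries the sign $(-1)^{|\lambda|+|\rho|}$ predicted by the conjecture.

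The hard part is precisely this sign control. Each of the three steps individually produces integer coefficients, but the inverse matrix of $(a_{\lambda\mu}^k)$ is not manifestly degree-alternating positive; this positivity is essentially dual to the prior conjecture on the $a_{\lambda\mu}^k$ themselves, so the reduction trades one unknown positivity for another. On the other hand, it pinpoints exactly the combinatorial input needed: a degree-alternating non-negativity theorem for the expansion of dual $k$-Schur functions into affine Grothendiecks at the same level.

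A more direct combinatorial route would instead use Theorem~\ref{setvalued2} and try to build a sign-reversing, weight-preserving involution on the chains of affine set-valued strips counted by $\mathcal{K}_{\lambda\nu}^{(k+1)}$ for $\nu \in \mathcal{P}^k$, repackaging the surviving terms into chains of affine set-valued strips at level $k$. Here the chief obstacle is that the notion of affine strip depends sensitively on residue labels of $(k+1)$-cores versus $k$-cores, and the bijection $\mathfrak{p}$ does not commute with the passage from $k+1$ to $k$. One very likely needs an interpolating ``shift'' on cores, analogous to the one implicit in the homogeneous branching of \cite{[LLMS2]}, before strips at the two levels can be compared cell by cell; adapting such a shift to the set-valued setting, where intermediate shapes may acquire simultaneously addable and removable corners of the same residue, is where I expect the genuine novelty of any proof to lie.
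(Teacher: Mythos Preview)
The statement you are attempting is explicitly labeled a \emph{Conjecture} in the paper; there is no proof given, only computational evidence mentioned in the final section. So there is nothing to compare your proposal against on the paper's side.

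Your proposal is also, as you yourself recognize, not a proof. The first approach reduces the conjecture to the degree-alternating positivity of the inverse of the matrix $(a_{\lambda\mu}^k)$ from Property~\ref{grotindualks}, which is essentially Lam's conjecture (stated just before this one in the paper) and is itself open. The second approach correctly identifies the main obstruction: the core-to-core correspondence between levels $k$ and $k+1$ does not respect residues, so affine s-v strips cannot be transported naively. Neither route closes the gap, and the paper does not pretend otherwise; this remains an open problem.
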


Note that \eqref{sinG} is the analog of \eqref{sindual} 
where the Schur function $s_\lambda$ is considered to 
be $\mathfrak s^{(\infty)}_\lambda$ and the dual $k$-Schur 
functions are then all replaced
by their $K$-theoretic counterparts, the affine
Grothendieck polynomials.
Alternatively, if we do not interpret $s_\lambda$ as
a dual $\infty$-Schur auction, we can derive a
conjecture about the affine Grothendieck  expansion of 
a Schur function.

\begin{conjecture}
For any $k$-bounded partition $\lambda$,
the coefficients $d_{\lambda\mu}^{k}$ in
\begin{equation}
s_\lambda = \sum_{\mu\in\mathcal P^k\atop |\mu|\geq|\lambda|}
d_{\lambda\mu}^{k}\, G_\mu^{(k)}  
\mod \mathcal I^k
\end{equation}
are non-negative integers.  
\end{conjecture}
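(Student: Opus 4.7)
The plan is to first pass to a dual formulation via the Hall inner product and then attack the resulting Schur-positivity claim, which will be the main obstacle.

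First I would handle the routine integrality. Property~\ref{grotbas} says $\{G_\mu^{(k)}\}_{\mu\in\mathcal P^k}$ is a basis of $\Lambda/\mathcal I^k$, so the expansion of $s_\lambda\bmod\mathcal I^k$ exists uniquely over $\mathbb Q$. Equation~\eqref{grotinmtri} shows the transition matrix from $\{G_\mu^{(k)}\}$ to $\{m_\mu\}$ is integer-valued and unitriangular with respect to dominance, so its inverse is integer-valued; combined with the Kostka-integer expansion of $s_\lambda$ in monomials, this forces $d_{\lambda\mu}^k\in\mathbb Z$.

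Next I would reformulate positivity via duality. Since $\Lambda^{(k)}=\mathbb Z[h_1,\ldots,h_k]$ pairs trivially with $\mathcal I^k=\langle m_\lambda:\lambda_1>k\rangle$ under the Hall inner product, and since $\{g_\nu^{(k)}\}$ is Hall-dual to $\{G_\mu^{(k)}\}$, pairing $s_\lambda=\sum_\mu d_{\lambda\mu}^k G_\mu^{(k)}\bmod\mathcal I^k$ with $g_\nu^{(k)}$ yields
\[
d_{\lambda\nu}^k=\langle s_\lambda, g_\nu^{(k)}\rangle,
\]
which is precisely the coefficient of $s_\lambda$ in the Schur expansion of $g_\nu^{(k)}$. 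Thus the conjecture is equivalent to the assertion that every $k$-$K$-Schur function $g_\nu^{(k)}$ is Schur-positive.

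The hard part will be proving this Schur-positivity. The top homogeneous component of $g_\nu^{(k)}$ is the ordinary $k$-Schur function $s_\nu^{(k)}$, so the conjecture contains the still-open $k$-Schur Schur-positivity conjecture as its top-degree specialization and is strictly stronger. I would proceed on two fronts in parallel: combinatorially, by inducting on $|\nu|$ using the Pieri rule~\eqref{kpieriintro}, the $k$-conjugation involution~\eqref{invoO}, and the explicit affine s-v tableau model, with the goal of producing a sign-reversing involution that cancels the negative contributions in a signed Schur expansion; and geometrically, by exploiting the interpretation of $g_\nu^{(k)}$ as a dual Schubert class in the $K$-theoretic nil Hecke ring \cite{[LSS]} and seeking a pushforward along a map between the finite and affine Grassmannians that witnesses positivity, following the template used to establish positivity results in the homology setting. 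The chief obstacle in either approach is controlling the inhomogeneous tail of $g_\nu^{(k)}$ below the top degree, since no analog of the rectangle-based reductions available for $s_\nu^{(k)}$ is presently known at the $K$-theoretic level.
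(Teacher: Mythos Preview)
The statement you are attempting is a \emph{conjecture} in the paper; there is no proof of it in the paper to compare against. What the paper does is state it as an open problem, motivated by the analogy with \eqref{sindual} and by the conjectured sign-alternating expansion \eqref{GinG}.

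Your reduction is correct and is the natural first step. Integrality follows exactly as you say from the unitriangularity in \eqref{grotinmtri}. Your duality argument is also valid: since $g_\nu^{(k)}\in\Lambda^{(k)}$ pairs trivially with $\mathcal I^k$, pairing the expansion with $g_\nu^{(k)}$ gives $d_{\lambda\nu}^k=\langle s_\lambda,g_\nu^{(k)}\rangle$, which by self-duality of Schur functions is the coefficient of $s_\lambda$ in $g_\nu^{(k)}$. So the conjecture is indeed equivalent to Schur positivity of every $k$-$K$-Schur function.

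However, your proposal is a plan, not a proof. You explicitly identify Schur positivity of $g_\nu^{(k)}$ as ``the hard part'' and then offer only two speculative avenues (a hoped-for sign-reversing involution, and a hoped-for geometric pushforward), neither of which you carry out. That is the genuine gap: nothing in the proposal actually establishes positivity. One small correction in passing: the paper records that Schur positivity of the ordinary $k$-Schur functions $s_\nu^{(k)}$ has been proven (it cites \cite{AB} and \cite{[LLMS2]}), so it is not ``still open'' in the paper's account; but that only handles the top homogeneous component of $g_\nu^{(k)}$ via Property~\ref{grotkschur}, and the lower-degree terms---which is where the $K$-theoretic content lies---remain untouched by your outline. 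Until you produce either the involution or the geometric argument, the conjecture stands unproven.
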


\section{$k$-$K$-Schur functions}

Thomas Lam conjectured in an FRG wiki post, and at the 2008
FRG problem solving session, that there is a basis 
$g_\lambda^{(k)}$ of $\Lambda^{(k)}$ such that
\begin{enumerate}
\item 
$\langle g_\lambda^{(k)},G_\mu^{(k)}\rangle = \delta_{\lambda\mu}$
\item as $k\to\infty$, $g_\lambda^{(k)}$ reduces to the dual
Grothendieck polynomial $g_\lambda$
\item The top homogeneous component of $g_\lambda^{(k)}$ is
the k-Schur function $s_\lambda^{(k)}$
\item $g_\lambda^{(k)}$ can be expanded positively in terms
of k-Schur functions.
\end{enumerate}

\smallskip

It was with this in mind that we began the study of a second
family of polynomials called {\it $k$-$K$-Schur functions}.  
Our point of departure is similar to what was done to define 
dual Grothendieck \eqref{defdualg} and $k$-Schur functions 
\eqref{kschurdef}, but now exploiting the invertibility of the matrix 
$||\mathcal K^{(k)}||_{\lambda,\mu\in \mathcal P^k}$
given by Property~\ref{trikostka}.

\begin{definition}
\label{kgrotdef}
{\it $k$-$K$-Schur functions} $g_\lambda^{(k)}$
are defined by the system of equations,
\begin{equation}
\label{e1}
h_\lambda = \sum_{\mu\in\mathcal P^k\atop
|\mu|\leq|\lambda|} 
(-1)^{|\lambda|+|\mu|}\,
\mathcal K_{\mu\lambda}^{(k)}
\, g_\mu^{(k)}\,\quad\text{
for all $\lambda\in\mathcal P^k$.}
\end{equation}
\end{definition}

In particular, we let $||\bar{\mathcal K}^{(k)}||$ denote the inverse
of $||{\mathcal K}^{(k)}||$ and invert \eqref{e1}.
The conditions on this matrix imposed by Property~\ref{trikostka}
imply
\begin{equation}
\label{invertkKs}
g_\lambda^{(k)}=
h_\lambda\, + 
\,
\sum_{\mu\in\mathcal P^k\atop
\mu\rhd\lambda}
\bar{\mathcal K}_{\mu\lambda}^{(k)}\,
h_\mu +
\sum_{\mu\in\mathcal P^k\atop
|\mu|<|\lambda|}
(-1)^{|\mu|+|\lambda|}
\bar{\mathcal K}_{\mu\lambda}^{(k)}\,
h_\mu 
\,.
\end{equation}
From this, we extract a number of properties 
including proofs of Conjectures (1--3). 

\begin{property}
The set $\left\{g_\lambda^{(k)}\right\}_{\lambda_1\leq k}$ forms a 
basis for $\Lambda^{(k)}$.
\end{property}

\begin{property}
For all $\lambda,\mu\in\mathcal P^k$,
$$
\langle g_\lambda^{(k)},G_\mu^{(k)}\rangle = \delta_{\lambda\mu}\,.
$$
\end{property}
\begin{proof}
From Equation~\eqref{invertkKs} for $k$-$K$-Schur functions, we have
that
$$
\langle G_\mu^{(k)},g_\lambda^{(k)}\rangle = 
\langle 
\sum_{|\mu|\leq |\beta|} (-1)^{|\mu|+|\beta|}\,
\mathcal K^{(k)}_{\mu\beta}\, m_\beta 
\,,
\sum_{|\alpha|\leq |\lambda|} (-1)^{|\lambda|+|\alpha|}
\, \bar{\mathcal K}^{(k)}_{\alpha\lambda}\, h_\alpha
\rangle
\,,
$$
implying 
$$
\langle G_\mu^{(k)},g_\lambda^{(k)}\rangle = 
\,
(-1)^{|\lambda|+|\mu|}
\sum_\beta
{\mathcal K}^{(k)}_{\mu\beta} 
\bar{\mathcal K}^{(k)}_{\beta\lambda} 
\,=\,\delta_{\lambda\mu}
$$
by the duality of $\{m_\beta\}$ and $\{h_\alpha\}$.
\end{proof}

We have seen that the term of lowest degree in 
the affine Grothendieck polynomial is a dual $k$-Schur
function.  The affine analog of \eqref{dualgins} is that
the highest degree 
term of a $k$-$K$-Schur function is a $k$-Schur function.
We also find that their expansion coefficients in terms
of $k$-Schur functions and the dual Grothendieck polynomials
are integers.

\begin{property}
\label{grotkschur}
For any $k$-bounded partition $\lambda$,
\begin{equation}
\label{triom}
g_{\lambda}^{(k)}=s_{\lambda}^{(k)}+ \sum_{\mu\in\mathcal P^k\atop
|\mu|<|\lambda|} d_{\lambda\mu}^{(k)}\, 
s^{(k)}_\mu
 \;\;\; for\;\; d_{\lambda\mu}^{(k)}\in\mathbb Z\,.
\end{equation}
\end{property}
\begin{proof}
Proposition~\ref{propsvkt2} implies that
$\mathcal K_{\mu\lambda}^{(k)}=K_{\mu\lambda}^{(k)}$
for all $|\lambda|=|\mu|$.  Making this replacement in
Equation~\eqref{invertkKs} gives that
$$
g_\lambda^{(k)}=
s_\lambda^{(k)} +
\sum_{\mu\in\mathcal P^k\atop
|\mu|<|\lambda|}
(-1)^{|\mu|+|\lambda|}
\bar{\mathcal K}_{\mu\lambda}^{(k)}\,
h_\mu 
\,.
$$
We can then expand $h_\mu$ in terms
of $k$-Schur functions using \eqref{kschurdef}
and since $\bar{\mathcal K}_{\mu\lambda}^{(k)}\in\mathbb Z$
by unitriangularity, we find
that the expansion coefficients of \eqref{triom} are
integers. 
\end{proof}

The long-standing conjecture that $k$-Schur functions are
Schur positive,
\begin{equation}
s_\lambda^{(k)} = \sum_{\mu}
b_{\lambda,\mu}^k\; s_\mu  
\quad\text{where}\quad b_{\lambda\mu}^{k}\in\mathbb N\,,
\end{equation}
has recently been proven in \cite{AB} and in \cite{[LLMS2]}.
The path to proving this result that is taken in \cite{[LLMS2]}
proves the more refined property
that
\begin{equation}
s_\lambda^{(k)} = \sum_{\mu\in\mathcal P^{k+1}}
b_{\lambda,\mu}^{k,k+1}\; s_\mu^{(k+1)}  
\quad\text{where}\quad b_{\lambda\mu}^{k,k+1}\in\mathbb N\,.
\end{equation}
The Schur positivity follows from this because a $k$-Schur 
function reduces to a Schur function for large $k$.

\smallskip

We conjecture that the theory of $k$-$K$-Schur functions follows
a similar path. To be precise,
the homogeneous symmetric functions that arise in \eqref{invertkKs} 
can be integrally expanded in terms of the dual Grothendieck polynomials 
by \eqref{defdualg}.
\begin{property}
For any $k$-bounded partition $\lambda$,
\begin{equation}
\label{grotpositive}
g_{\lambda}^{(k)}=g_{\lambda}+ 
\sum_{\mu\rhd\lambda}
b_{\lambda\mu}\, g_\mu \,+\,
\sum_{|\mu|<|\lambda|} 
b_{\lambda\mu}\, g_\mu \;\;\; 
for\;\; b_{\lambda\mu}\in\mathbb Z\,.
\end{equation}
\end{property}

Similarly, the homogeneous symmetric functions that arise 
in  \eqref{invertkKs} can be instead be expanded integrally
in terms of the $k+1$-$K$-Schur functions using 
Definition~\ref{kgrotdef}.  

\begin{property}
\label{trigro}
For any $\lambda\in\mathcal P^k$,
\begin{equation}
g_{\lambda}^{(k)}=g_{\lambda}^{(k+1)}+ 
\sum_{\mu\rhd\lambda}
b_{\lambda\mu}^k\, g_\mu^{(k+1)} \,+\,
\sum_{|\mu|<|\lambda|} 
b_{\lambda\mu}^k\, g_\mu^{(k+1)} \;\;\; 
for\;\; b_{\lambda\mu}^k\in\mathbb Z\,.
\end{equation}
\end{property}

\begin{conjecture}
For all $\lambda,\mu\in\mathcal P^k$,
the integer coefficient $(-1)^{|\lambda|+|\mu|}\,b_{\lambda\mu}^k$ 
is non-negative.
\end{conjecture}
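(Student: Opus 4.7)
The plan is to reduce this conjecture to the $K$-theoretic branching conjecture stated earlier in the paper, namely that the coefficients $d_{\mu\nu}^{k+1,k}$ in
\begin{equation*}
G_\mu^{(k+1)} \;\equiv\; \sum_{\nu\in\mathcal{P}^k,\,|\nu|\geq|\mu|} (-1)^{|\mu|+|\nu|}\, d_{\mu\nu}^{k+1,k}\, G_\nu^{(k)} \pmod{\mathcal{I}^k}
\end{equation*}
are non-negative integers. The key is to use Hall duality between $\Lambda^{(k)}$ and $\Lambda/\mathcal{I}^k$ to identify $(-1)^{|\lambda|+|\mu|}b_{\lambda\mu}^k$ with $d_{\mu\lambda}^{k+1,k}$ on the nose, so that the two positivity statements become equivalent.

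First, since $\Lambda^{(k)}\subseteq\Lambda^{(k+1)}$ and $\{g_\mu^{(k+1)}\}$ is Hall-dual to $\{G_\mu^{(k+1)}\}$, the coefficients appearing in Property~\ref{trigro} can be extracted by pairing:
\begin{equation*}
b_{\lambda\mu}^k \;=\; \langle g_\lambda^{(k)},\, G_\mu^{(k+1)}\rangle.
\end{equation*}
Because $g_\lambda^{(k)}\in\Lambda^{(k)}$ is Hall-orthogonal to every element of $\mathcal{I}^k$, this pairing is unchanged when $G_\mu^{(k+1)}$ is replaced by its class modulo $\mathcal{I}^k$. Substituting the conjectured expansion displayed above and applying the duality $\langle g_\lambda^{(k)},G_\nu^{(k)}\rangle=\delta_{\lambda\nu}$ at level $k$ then yields
\begin{equation*}
(-1)^{|\lambda|+|\mu|}\, b_{\lambda\mu}^k \;=\; d_{\mu\lambda}^{k+1,k}.
\end{equation*}
The support condition $|\nu|\geq|\mu|$ in the $G^{(k+1)}$-to-$G^{(k)}$ expansion matches the restriction $|\mu|\leq|\lambda|$ visible in Property~\ref{trigro}, so the two sign-corrected triangular arrays are literally identical, and non-negativity of either one is equivalent to non-negativity of the other.

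The main obstacle is of course that the target conjecture on $d_{\mu\lambda}^{k+1,k}$ is itself open. A natural route is to mirror the strategy of \cite{[LLMS2]}, where the homogeneous branching positivity $\mathfrak{S}_\lambda^{(k+1)}=\sum a_{\lambda\mu}^{k+1,k}\mathfrak{S}_\mu^{(k)}\pmod{\mathcal{I}^k}$ is proved via Schubert calculus on the affine Grassmannian, and to lift those arguments to the $K$-homology, where \cite{[LSS]} identifies $G_\lambda^{(k)}$ as a Schubert representative. A purely combinatorial alternative — an explicit branching rule, or an injection between affine set-valued tableaux at levels $k$ and $k+1$ that is compatible with the sign $(-1)^{|\lambda|+|\mu|}$ — would be appealing but appears to require new structure beyond what is developed in Sections~\ref{astab}--\ref{AGP}.
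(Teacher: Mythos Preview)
The statement you are addressing is a \emph{conjecture}; the paper does not prove it and offers no argument beyond recording it immediately after Property~\ref{trigro}. So there is no paper-proof to compare your proposal against.

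Your duality computation is correct and worth knowing: pairing $g_\lambda^{(k)}\in\Lambda^{(k)}$ against $G_\mu^{(k+1)}$ and using that $\Lambda^{(k)}\perp\mathcal I^k$ under the Hall inner product does give
\[
b_{\lambda\mu}^k=\langle g_\lambda^{(k)},G_\mu^{(k+1)}\rangle=(-1)^{|\lambda|+|\mu|}\,d_{\mu\lambda}^{k+1,k},
\]
so the two sign-corrected arrays coincide and the two positivity conjectures are equivalent. That is a genuine observation, and it is the reason the paper records these two conjectures in parallel, though the equivalence is not spelled out there.

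However, as you yourself note, the target statement about $d_{\mu\lambda}^{k+1,k}$ is equally open, so what you have written is a reduction between two open conjectures, not a proof of either. The sketched routes in your final paragraph (lifting the \cite{[LLMS2]} branching argument to $K$-homology via \cite{[LSS]}, or a combinatorial injection on affine set-valued tableaux across levels) are reasonable programs, but neither is executed, and no such argument is known in the literature cited by the paper. In short: your equivalence is right, but the conjecture remains a conjecture.
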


Then, when $k$ is large, a $k$-$K$-Schur function reduces
simply to a dual Grothendieck polynomial.

\begin{property}
\label{kinfty}
If $|\lambda|\leq k$, then $g_\lambda^{(k)}=g_\lambda$.
\end{property}
\begin{proof}
Proposition~\ref{propsvkt1} implies that
$\mathcal K_{\lambda\mu}^{(k)}=\mathcal K_{\lambda\mu}$ 
when $h(\lambda)\leq k$.
In particular, if $|\lambda|\leq k$ then
all partitions $\mu$ where $|\mu|\leq |\lambda|$
have $k$-bounded  hook-length.
\end{proof}

From this, iterating Property~\ref{trigro} will eventually lead 
to the positive (up to alternating sign) expansion
coefficients in \eqref{grotpositive}
of a $k$-$K$-Schur function in terms
of dual Grothendieck polynomials.

\begin{conjecture}
For all $\lambda,\mu\in\mathcal P^k$,
the integer coefficient $(-1)^{|\lambda|+|\mu|}\,b_{\lambda\mu}$ 
is non-negative.
\end{conjecture}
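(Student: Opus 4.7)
The plan is to derive this conjecture directly from the preceding one (on the signs of the single-level coefficients $b_{\lambda\mu}^k$), by iterating Property~\ref{trigro} and then invoking Property~\ref{kinfty} as a termination step. The paper practically spells out this strategy in the remark just before the statement, so the task is to turn it into a precise chain-of-coefficients argument.

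First, I would iterate Property~\ref{trigro}: starting from $g_\lambda^{(k)}$, expand as a $\mathbb Z$-linear combination of $\{g_{\mu}^{(k+1)}\}$, then expand each summand in $\{g_{\mu'}^{(k+2)}\}$, and so on. After $N$ steps,
$$g_\lambda^{(k)} \;=\; \sum_\mu c_{\lambda\mu}^{(N)}\, g_\mu^{(k+N)},$$
where $c_{\lambda\mu}^{(N)}$ collects, over all chains $\lambda = \mu^{(0)}, \mu^{(1)}, \dots, \mu^{(N)} = \mu$ with $\mu^{(j)}\in\mathcal P^{k+j}$, the product $\prod_{j=0}^{N-1} b_{\mu^{(j)}\mu^{(j+1)}}^{k+j}$. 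Crucially, each application of Property~\ref{trigro} only introduces $\mu^{(j+1)}$ with either $\mu^{(j+1)} \rhd \mu^{(j)}$ (same size) or $|\mu^{(j+1)}| < |\mu^{(j)}|$, so $|\mu^{(j)}| \le |\lambda|$ for all $j$. In particular only finitely many chains contribute, since at each stage we range over the finite set of partitions of size at most $|\lambda|$.

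Second, I would choose $N$ with $k+N \ge |\lambda|$. For any $\mu^{(N)}$ appearing in the sum, $|\mu^{(N)}| \le |\lambda| \le k+N$, so Property~\ref{kinfty} gives $g_{\mu^{(N)}}^{(k+N)} = g_{\mu^{(N)}}$. Since the dual Grothendieck polynomials $\{g_\mu\}$ are a basis of $\Lambda$, comparison with Property~\ref{grotpositive} forces $b_{\lambda\mu} = c_{\lambda\mu}^{(N)}$ for every $\mu$ in the expansion, in particular for $\mu \in \mathcal P^k$.

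Third, for the sign, the preceding conjecture supplies $(-1)^{|\mu^{(j)}|+|\mu^{(j+1)}|}\, b_{\mu^{(j)}\mu^{(j+1)}}^{k+j} \ge 0$. When multiplied along a chain, the interior sizes $|\mu^{(1)}|,\dots,|\mu^{(N-1)}|$ each appear twice and cancel modulo $2$, leaving
$$(-1)^{|\lambda|+|\mu|}\,\prod_{j=0}^{N-1} b_{\mu^{(j)}\mu^{(j+1)}}^{k+j} \;\ge\; 0.$$
Summing these non-negative contributions over the finitely many chains gives $(-1)^{|\lambda|+|\mu|}\, b_{\lambda\mu} \ge 0$, as claimed. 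The main obstacle is, evidently, the preceding conjecture itself: the combinatorial heart of the matter is the single-step sign property at each level $k+j$, presumably accessible via a geometric or nil-Hecke argument in the spirit of \cite{[LLMS]} and \cite{[LSS]}; once that is in hand, the present statement follows formally from the telescoping iteration above, with no further input beyond Property~\ref{kinfty} and uniqueness of the $\{g_\mu\}$-expansion.
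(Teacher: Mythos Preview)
This statement is a \emph{conjecture} in the paper, not a theorem; the paper does not prove it. What the paper offers is exactly the one-sentence heuristic you have formalized: ``iterating Property~\ref{trigro} will eventually lead to the positive (up to alternating sign) expansion coefficients in \eqref{grotpositive}.'' Your argument is a correct and complete fleshing-out of that remark: the chain-of-coefficients telescoping, the degree bound $|\mu^{(j)}|\le|\lambda|$ ensuring finiteness and allowing Property~\ref{kinfty} to terminate the iteration once $k+N\ge|\lambda|$, and the parity cancellation of the intermediate sizes are all exactly right. You also correctly identify that the entire content rests on the preceding conjecture about the single-step coefficients $b_{\lambda\mu}^k$, which remains open; so your ``proof'' is, like the paper's, a conditional reduction rather than a proof.
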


\begin{property}
\label{kss}
For any partition $\lambda$ with $h(\lambda)\leq k$,
we have that 
\begin{equation}
\label{kinftylower}
g_\lambda^{(k)}=g_\lambda + \text{lower degree terms}\,.
\end{equation}
\end{property}
Note: we conjecture that all the lower degree terms cancel.

\begin{proof}
When $k\geq h(\lambda)$,
it was shown in \cite{[LMproofs]} that
$s_\lambda^{(k)} = s_{\lambda}$.
Thus, the $k$-Schur expansion of Property~\ref{grotkschur} reduces
in this case to
\begin{equation}
g_\lambda^{(k)}=s_\lambda + \text{lower degree terms}\,.
\end{equation}
The result the follows from \eqref{dualgins}
expressing $g_\lambda$ as $s_\lambda$ plus lower degree terms.
\end{proof}

\section{Pieri rules}

In addition to the basic properties of $k$-$K$-Schur functions
extracted from the definition, we have also 
determined explicit Pieri rules for these polynomials.

\subsection{Row Pieri Rule}

\begin{theorem}
\label{kpieri}
For any $k$-bounded partition $\lambda$ and $r\leq k$,
\begin{equation}
\label{kpierieq}
g^{(k)}_r\,g_\lambda^{(k)} = 
\sum_{(\mu,\rho)\in \mathcal H_{\lambda,r}^{k}}
(-1)^{|\lambda|+r-|\mu|}
\,g_\mu^{(k)}
\,,
\end{equation}
where
$
\mathcal H^{k}_{\lambda,r} = 
\{ (\mu,\rho): (\core(\mu)/\core(\lambda),\rho)
=\text{affine~set-valued~$r$-strip}
\}
\,.
$
\end{theorem}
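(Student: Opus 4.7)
The plan is to compute the coefficient of $g_\mu^{(k)}$ in $g_r^{(k)}g_\lambda^{(k)}$ by Hall-pairing with the dual basis element $G_\mu^{(k)}$ and then simplifying via the adjoint of multiplication by $h_r$. First I would show $g_r^{(k)}=h_r$ for $r\leq k$: applying Definition~\ref{kgrotdef} to $\lambda=(r)$ requires the values $\mathcal K^{(k)}_{\mu,(r)}$, which by Theorem~\ref{setvalued2} count chains consisting of a single affine s-v $r$-strip from $\emptyset$ to $\core(\mu)$; with $\beta=\emptyset$, Definition~\ref{incstrip} forces $\rho=\emptyset$ and $\core(\mu)$ to be a single row of $r$ cells, whence $\mathcal K^{(k)}_{\mu,(r)}=\delta_{\mu,(r)}$ and inverting \eqref{e1} gives $g_r^{(k)}=h_r$.

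Next, using the adjoint relation $\langle h_r f,G_\mu^{(k)}\rangle=\langle f,h_r^\perp G_\mu^{(k)}\rangle$ combined with $h_r^\perp m_\nu=m_{\nu-(r)}$ (remove one part of size $r$ from $\nu$, or $0$ if no such part exists), the monomial expansion \eqref{grotinm} produces
$$h_r^\perp G_\mu^{(k)}\;=\;\sum_{\nu'\in\mathcal P^k}(-1)^{|\mu|+|\nu'|+r}\,\mathcal K^{(k)}_{\mu,\nu'\cup(r)}\,m_{\nu'}.$$
The symmetry of the affine Kostka numbers from Corollary~\ref{corosym} then lets me rewrite $\mathcal K^{(k)}_{\mu,\nu'\cup(r)}=\mathcal K^{(k)}_{\mu,(\nu',r)}$ with $r$ placed as the final component of the composition.

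The central combinatorial step is to apply Theorem~\ref{setvalued2}: any chain of affine s-v strips realizing an element of $\mathcal T^k_{(\nu',r)}(\mu)$ factors uniquely into (i) a chain ending at some intermediate $\core(\lambda)$ with $\lambda\in\mathcal P^k$, which is an element of $\mathcal T^k_{\nu'}(\lambda)$, and (ii) a single terminal affine s-v $r$-strip $(\core(\mu)/\core(\lambda),\rho)$, so that
$$\mathcal K^{(k)}_{\mu,(\nu',r)}\;=\;\sum_{\lambda\in\mathcal P^k}\mathcal K^{(k)}_{\lambda,\nu'}\;\#\{\rho:(\mu,\rho)\in\mathcal H_{\lambda,r}^k\}.$$
Substituting into the previous display, swapping the order of summation, and recognizing the inner sum $\sum_{\nu'}(-1)^{|\lambda|+|\nu'|}\mathcal K^{(k)}_{\lambda,\nu'}m_{\nu'}=G_\lambda^{(k)}$ yields
$$h_r^\perp G_\mu^{(k)}\;=\;\sum_{\lambda\in\mathcal P^k}(-1)^{|\mu|+r-|\lambda|}\,\#\{\rho:(\mu,\rho)\in\mathcal H_{\lambda,r}^k\}\,G_\lambda^{(k)}.$$

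Finally, pairing with $g_\lambda^{(k)}$ and invoking duality extracts the coefficient $(-1)^{|\mu|+r-|\lambda|}\#\{\rho:(\mu,\rho)\in\mathcal H_{\lambda,r}^k\}$; the parity matches the claimed $(-1)^{|\lambda|+r-|\mu|}$ since the two exponents differ by the even integer $2(|\mu|-|\lambda|)$. Summing over $\mu$ then recovers the Pieri identity. The main obstacle I anticipate is making the bijective factorization of chains from Theorem~\ref{setvalued2} airtight: one must verify that both the truncated chain and the terminal strip independently satisfy conditions (asv1)--(asv3), and that the intermediate $\lambda$ is uniquely determined by the full chain. Both follow from the structural definition of affine s-v tableaux as nested sequences, but the verification should be carried out carefully to ensure that no chain is double-counted or omitted in the sum.
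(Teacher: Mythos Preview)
Your proof is correct and rests on exactly the same combinatorial core as the paper's argument: the identity $g_r^{(k)}=h_r$, the symmetry of Corollary~\ref{corosym}, and the chain-factorization identity
\[
\mathcal K^{(k)}_{\mu,(\nu',r)}=\sum_{\lambda}\mathcal K^{(k)}_{\lambda,\nu'}\cdot\#\{\rho:(\mu,\rho)\in\mathcal H^k_{\lambda,r}\},
\]
which is precisely equation~\eqref{kids} in the paper. The difference is only in packaging. The paper expands $h_r h_\lambda$ in the $g^{(k)}$-basis in two ways (once via \eqref{e1} applied to $h_{(\lambda,r)}$, once via an unknown Pieri rule applied after \eqref{e1} for $h_\lambda$), then equates coefficients and solves the resulting linear system by invoking invertibility of $||\mathcal K^{(k)}||$. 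You instead pass to the dual side: compute $h_r^\perp G_\mu^{(k)}$ in the monomial basis, plug in the same factorization identity, and read off the coefficient of $G_\lambda^{(k)}$ directly. Your route is a bit more streamlined, since duality $\langle g_\lambda^{(k)},G_\mu^{(k)}\rangle=\delta_{\lambda\mu}$ absorbs the matrix-inversion step and you never need to argue uniqueness of the solution. One small remark: your closing worry about verifying (asv1)--(asv3) for the truncated chain is unnecessary, since Theorem~\ref{setvalued2} already states the bijection between $\mathcal T^k_\alpha(\lambda)$ and chains of affine s-v strips; truncating the last step of such a chain is automatically a valid shorter chain by definition.
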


\begin{example}
$$
g_2^{(3)}\,g_{3,2,1}^{(3)} 
=
g^{(3)}_{3, 2, 2, 1}+
g^{(3)}_{3, 3, 1, 1}-
g^{(3)}_{3, 2, 1, 1}-
2g^{(3)}_{3, 2, 2}+
g^{(3)}_{3, 2, 1}
$$
\end{example}

\begin{proof}
Note that $g_\ell^{(k)}=h_\ell$.
Since the $k$-$K$-Schur functions form a basis of $\Lambda^{(k)}$, 
there is an expansion
\begin{equation}
h_\ell\,g_\nu^{(k)}
= \sum_{\mu} c_{\mu\nu}\, g_\mu^{(k)}
\,,
\end{equation}
for some coefficients $c_{\mu\nu}$.  To determine the
$c_{\mu\nu}$, we examine
$h_\ell h_\lambda$.  Using the $k$-$K$-Schur expansion \eqref{e1} 
for $h_\lambda$, we find that
\begin{equation}
\label{pierieq2}
h_{\ell} h_\lambda=
\sum_{\nu } \mathcal K^{(k)}_{\nu\lambda} \,h_\ell\,g_\nu^{(k)}
=
\sum_{\nu } \mathcal K^{(k)}_{\nu\lambda} 
\sum_{\mu} c_{\mu\nu}\, g_\mu^{(k)}
\,.
\end{equation}
On the other hand, we can use \eqref{e1} to expand $h_{\ell} h_\lambda=
h_{\tau}$, where $\tau$ is the partition rearrangement of
$(\ell,\lambda)$:  
\begin{equation}
\label{pierieq1}
h_\ell h_\lambda = 
h_{\tau} = 
\sum_{\mu} \mathcal K^{(k)}_{\mu\tau} \,g_\mu^{(k)} 
= \sum_{\mu} \mathcal K^{(k)}_{\mu\,(\lambda,\ell)} \,g_\mu^{(k)} 
\,,
\end{equation}
where the last equality holds by Corollary~\ref{corosym}.
Then, since Theorem~\ref{setvalued2} implies 
\begin{equation}
\label{kids}
\mathcal K_{\mu\, {(\lambda,\ell)}}^{(k)} \,=\,
\sum_{\nu: \mu\in \mathcal H^{k}_{\nu,\ell}}
\mathcal K_{\nu\lambda}^{(k)}
\,,
\end{equation}
we have
\begin{equation}
\label{pierieq1}
h_\ell h_\lambda=
\sum_{\mu } 
\sum_{\nu:\mu\in \mathcal H^{(k)}_{\nu,\ell}}
\mathcal K^{(k)}_{\nu\lambda} \,g_\mu^{(k)}
\,.
\end{equation}
Equating the coefficient of $g_\mu^{(k)}$ in the right 
side of this expression to that of \eqref{pierieq2} to get the system:
\begin{equation}
\sum_{\nu:\mu\in \mathcal H^{(k)}_{\nu,\ell}}
\mathcal K^{(k)}_{\nu\lambda} 
= \sum_{\nu} \mathcal K^{(k)}_{\nu\lambda} c_{\mu\nu}
\,.
\end{equation}
We thus find our desired solution
\begin{equation}
c_{\mu \nu} = 
\left\{ 
\begin{array}{ll} 
1 & {\rm if~} \mu \in \mathcal H_{\nu,\ell}^{(k)} \\
0 & {\rm otherwise}
\end{array}
\right\}
\,.
\end{equation}
It is unique since any other solution satisfies
\begin{equation}
\sum_{\nu} \mathcal K^{(k)}_{\nu\lambda}\left(
\tilde c_{\mu\nu} - c_{\mu\nu}
\right) = 0\,,
\end{equation}
and the invertibility of $\mathcal K^{(k)}_{\nu\lambda}$
implies $\tilde c_{\mu\nu}=c_{\mu\nu}$.
\end{proof}

The Pieri rule can equivalently be phrased in the notation 
of affine permutations.
In particular, Theorem~\ref{alphabij} identifies 
affine s-v strips with cyclically decreasing words
and we know (e.g. Remark~\ref{part2word}) that
that $w_{(r)}= s_{r-1}\cdots s_{0}$.

\begin{corollary}
For any $w\in\tilde S^0_{k+1}$ and $r\leq k$,
\begin{equation}
\label{kpiericor}
g^{(k)}_{s_{r-1}\cdots s_{0}}\, g^{(k)}_{w} = 
\sum_{v=uw: \ell(u)=r
\atop {
u~cyclically~decreasing
} 
}
(-1)^{\ell(w)+r-\ell(v)}\,
 g^{(k)}_{v}
\,.
\end{equation}
\end{corollary}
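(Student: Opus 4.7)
The plan is to derive this corollary as a notational translation of Theorem~\ref{kpieri} via the bijection $\mathfrak{t}: \tilde S^{0}_{k+1} \to \mathcal{C}^{k+1}$ of Remark~\ref{addonezerosi}(2), which identifies each $k$-bounded partition $\lambda$ with the grassmannian element $w_\lambda$ of length $|\lambda|$. Setting $w = w_\lambda$ and recalling from Remark~\ref{part2word} that $w_{(r)} = s_{r-1} \cdots s_0$, the left-hand sides agree under the convention $g^{(k)}_\lambda = g^{(k)}_{w_\lambda}$. On the right, since $|\lambda| = \ell(w_\lambda)$ and we will arrange for $\mu$ to satisfy $w_\mu = v$, the exponent $|\lambda|+r-|\mu|$ in Theorem~\ref{kpieri} matches the $\ell(w)+r-\ell(v)$ appearing here.

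The substance of the proof is to construct a bijection
\[
\mathcal{H}^{k}_{\lambda,r} \;\longleftrightarrow\; \bigl\{\,u \in \tilde S_{k+1} : u \text{ cyclically decreasing},\ \ell(u)=r,\ v := uw_\lambda \in \tilde S^{0}_{k+1}\,\bigr\},
\]
sending $(\mu,\rho)$ to the cyclically decreasing word $u$ encoding the affine set-valued strip $(\core(\mu)/\core(\lambda),\rho)$, so that $w_\mu = uw_\lambda$. The backward direction is essentially Lemma~\ref{strip=cyclic}: given $u = s_{i_r}\cdots s_{i_1}$ with $v = uw_\lambda$ grassmannian, apply $\mathfrak{s}_{i_r}\cdots\mathfrak{s}_{i_1}$ to $\core(\lambda)$ to obtain a core $\gamma = \core(\mu)$, define $\rho$ to be $\gamma$ with the labels ``$1$'' deleted, and invoke the lemma once the ``distinct residues'' hypothesis is verified. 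That hypothesis is forced: a repeated residue would trigger an idempotent collapse $\mathfrak{s}_i^2 = \mathfrak{s}_i$ from Remark~\ref{addonezerosi}(4), which mirrors a braid cancellation in the Coxeter product $uw_\lambda$; too many such collapses would drop $\ell(uw_\lambda)$ below $|\mu| = \ell(w_\mu)$, contradicting $\mathfrak{t}(v) = \gamma$. The forward direction inverts this by reading the residues of $\core(\mu)/\rho$ from rightmost cell backward (top to bottom within each residue class), producing a cyclically decreasing word by the horizontality of $\core(\mu)/\rho$ together with Corollary~\ref{distinctrescor}.

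The main obstacle is reconciling the length and sign data between the two descriptions. When $uw_\lambda$ is a reduced expression, $\ell(v) = r + |\lambda|$ and the corresponding strip has $\rho = \core(\lambda)$, i.e., no set-valued cells. Each non-reduced collision in the product $u \cdot w_\lambda$ corresponds precisely to a cell of $\core(\lambda)/\rho$: the collision triggers one idempotent collapse on cores, which in the strip language contributes exactly one set-valued cell. The total count of such collapses is therefore $r + |\lambda| - |\mu|$, matching the exponent on both sides. Once the bijection and the length correspondence are in place, the corollary follows term by term from Theorem~\ref{kpieri}.
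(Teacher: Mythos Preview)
Your approach is exactly the paper's: translate Theorem~\ref{kpieri} through Theorem~\ref{alphabij}, which (read one step at a time) matches affine set-valued strips with cyclically decreasing factors, together with $w_{(r)}=s_{r-1}\cdots s_0$ from Remark~\ref{part2word}. The paper offers no more than this citation, so your write-up is already more detailed than the original.

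One substantive wobble: your verification of the ``distinct residues'' hypothesis in Lemma~\ref{strip=cyclic} misidentifies the issue. The indices $i_1,\dots,i_r$ in a cyclically decreasing $u$ are distinct \emph{by definition}, so there is no ``repeated residue'' to trigger $\mathfrak s_i^2=\mathfrak s_i$ inside $u$. What the hypothesis actually demands is that each intermediate core $\mathfrak s_{i_{j-1}}\cdots\mathfrak s_{i_1}\core(\lambda)$ has an addable or removable $i_j$-corner; this is precisely what Remark~\ref{hasaddableorremovable} supplies once the concatenated word lies in $\mathcal W(w_\mu)$ for a grassmannian $w_\mu$, and it is how the paper's own proof of Theorem~\ref{alphabij} handles this step. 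Your argument as written is circular, since it invokes $|\mu|=\ell(w_\mu)$ before $\mu$ has been produced.

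A related point: the product $v=uw$ here (and in the paper's notion of $\alpha$-factorization, which is governed by the nil Hecke relation $A_i^2=-A_i$) is the $0$-Hecke/Demazure product, not the Coxeter product. For example, with $k=2$, $u=s_0$, $w=w_{(1)}=s_0$, one must get $v=s_0$ (matching the strip $((1)/(1),\emptyset)$ with $\mu=(1)$), not $v=\mathrm{id}$. Your length bookkeeping in the last paragraph is consistent with the Demazure reading (each ``collision'' drops length by one and contributes one residue to $\beta/\rho$), but the phrase ``Coxeter product'' there is misleading and should be corrected.
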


Note that
the highest degree terms in the rhs of \eqref{kpierieq}
are simply the terms given by the Pieri rule 
\cite{[LMproofs]} for $k$-Schur functions:
$$
h_\ell\, g_\nu^{(k)} = h_\ell \,s_\nu^{(k)} + lower~terms
\,,
$$  
obtained by adding affine $\ell$-strips to $\core(\nu)$.

\subsection{Column Pieri Rule}

There is also a combinatorial rule to compute the $k$-$K$-Schur 
function expansion of $g_{1^\ell}^{(k)} g_\nu^{(k)}$ in 
terms of vertical strips rather than horizontal.  The 
dual Grothendieck polynomial indexed by a column is
\begin{equation}
\label{g1}
g_{1^\ell}=\sum_{j=1}^{\ell}\binom{\ell-1}{j-1} e_{j}
\,,
\end{equation}
and $g_{1^\ell}^{(k)}= g_{1^\ell}$ when $\ell \leq k$
by Property~\ref{kinfty}.
To determine the associated Pieri rule, we start with
a ``K-theoretic" version of Newton's formula
(e.g. \cite{Macbook}):
\begin{equation}
\label{newton}
\sum_{r=0}^{\ell} (-1)^{r}\, h_{\ell-r} \,e_r = 0
\,.
\end{equation}
\begin{proposition}
\label{innewton}
For any integer $\ell\geq 0$,
\begin{equation}
\sum_{r=0}^{\ell} \sum_{j=0}^{r}
(-1)^{j+r} \binom{r-2}{j} g_{\ell-r}\,
g_{1^{r-j}} = 0
\,.
\end{equation}
\end{proposition}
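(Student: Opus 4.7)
The plan is to derive this identity as a $K$-theoretic repackaging of the classical Newton identity $\sum_{r=0}^{\ell}(-1)^r h_{\ell-r}e_r = 0$, after expressing both $h_m$ and $e_r$ in the dual-Grothendieck basis.

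First I would establish that $h_m = g_m$. Applied to the one-part partition $\mu = (m)$, formula \eqref{defgrot} forces a set-valued tableau of weight $(m)$ to use only the letter $1$; the column condition $\max X < \min Y$ then rules out any cell lying strictly below another, so the shape must be the single row $(m)$ with each cell carrying $\{1\}$. Hence $\mathcal K_{\lambda,(m)} = \delta_{\lambda,(m)}$ and \eqref{defdualg} collapses to $h_m = g_m$. I would then invert \eqref{g1} by standard binomial M\"obius inversion to obtain
$$e_r \,=\, \sum_{s=1}^{r}(-1)^{r-s}\binom{r-1}{s-1} g_{1^s}.$$

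With these two identities in hand, the cleanest route is via generating functions. Set $H(x) = \sum_{m\geq 0} h_m x^m$, $E(x) = \sum_{r\geq 0} e_r x^r$ and $K(x) = \sum_{s\geq 0} g_{1^s} x^s$. Formula \eqref{g1} reads $K(x) = E(x/(1-x))$, so the substitution $x \mapsto -x/(1-x)$ gives $K\!\bigl(-\tfrac{x}{1-x}\bigr) = E(-x)$. Re-indexing the double sum in the proposition by $s = r-j$ and evaluating the resulting inner series $\sum_{m \geq 0}\binom{s+m-2}{m}(-x)^m = (1-x)^{1-s}$ via the generalized binomial theorem, I would identify the left-hand side of the proposition as the coefficient of $x^\ell$ in
$$(1-x)\,H(x)\,K\!\bigl(-\tfrac{x}{1-x}\bigr) \,=\, (1-x)\,H(x)\,E(-x) \,=\, 1-x,$$
by classical Newton. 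Vanishing of this coefficient (for the relevant range of $\ell$) then gives the claimed identity.

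The main obstacle is the careful bookkeeping of the generalized binomial coefficients at small values of $r$: one needs $\binom{-1}{k} = (-1)^k$ and $\binom{-2}{k} = (-1)^k(k+1)$ in order for the inner geometric-series summation to hold uniformly in $s$, and one must check that the $r = 0, 1$ contributions combine correctly with the main sum. An alternative, purely algebraic route replaces the generating-function step by iterated applications of Pascal's rule $\binom{r-1}{j} = \binom{r-2}{j} + \binom{r-2}{j-1}$ to the identity obtained directly from Newton after the two substitutions above, with a final index shift $j \mapsto j-1$ causing one of the two resulting sums to telescope; but the generating-function form is more transparent and sidesteps a tedious case analysis.
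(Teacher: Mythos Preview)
Your approach is correct and genuinely different from the paper's. The paper expands $g_{1^{r-j}}$ via \eqref{g1} back into elementary symmetric functions, producing a triple sum which is then collapsed using the binomial orthogonality $\sum_{j}(-1)^j\binom{r-2}{r-2-j}\binom{r-j-2}{t-1}=(-1)^{r-t-1}\delta_{r-2,t-1}$; this reduces the expression to two shifted copies of Newton's formula. You instead package \eqref{g1} as the generating-function identity $K(x)=E(x/(1-x))$, from which the proposition becomes the vanishing of the $x^\ell$ coefficient in $(1-x)H(x)E(-x)=1-x$, an immediate consequence of Newton. Your route is more conceptual --- it exhibits the identity as Newton's formula transported through the substitution $x\mapsto x/(1-x)$ --- whereas the paper's computation is direct but relies on a binomial identity that is not otherwise motivated.

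Two small points. First, there is a sign slip in your displayed inner series: after the reindexing $s=r-j$ the inner generating function is $\sum_{m\geq 0}\binom{s+m-2}{m}x^m=(1-x)^{1-s}$, with $x^m$ rather than $(-x)^m$; your final expression $(1-x)H(x)K(-x/(1-x))$ is nonetheless correct, so this is cosmetic. Second, both your computation and the paper's show that the expression equals the $x^\ell$ coefficient of $1-x$, which is nonzero for $\ell=0,1$; the statement ``$\ell\geq 0$'' in the proposition is therefore slightly off under the generalized binomial convention. You handle this correctly by restricting to ``the relevant range of $\ell$'', and indeed only $\ell\geq 2$ is used in the application to Theorem~\ref{ekpieri}.
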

\begin{proof}
By expression \eqref{g1} for $g_{1^\ell}$, this follows from
the identity
\begin{equation}
\sum_{r=0}^{\ell} \sum_{j=0}^{r}
\sum_{t=1}^{r-j}
(-1)^{j+r} \binom{r-2}{j}
\binom{r-j-1}{t-1} 
h_{\ell-r}\,
e_{t}
= 0
\,.
\end{equation}
Equivalently,
\begin{equation}
\label{knewt}
\sum_{r=0}^{\ell} 
\sum_{t=1}^{r}
(-1)^r
\left(
\sum_{j=0}^{r-t}
(-1)^{j} \binom{r-2}{j}
\binom{r-j-1}{t-1} 
\right)
h_{\ell-r}\,
e_{t}
= 0
\,.
\end{equation}
In fact, the orthogonality identity implies that
\begin{equation}
\sum_{j=0}^{r-t}
(-1)^{j} \binom{r-2}{r-2-j}
\left(
\binom{r-j-2}{t-1} 
+ \binom{r-j-2}{t-2} 
\right)
\qquad
\qquad
\end{equation}
\begin{equation}
\qquad
=
\;
\;
(-1)^{r+t-1} \delta_{r-2,t-1} +
(-1)^{r-t} \delta_{r-2,t-2}
\end{equation}
and thus the l.h.s. of \eqref{knewt} reduces to
\begin{equation}
\sum_{r=0}^{\ell} 
\sum_{t=1}^{r}
(-1)^r
\left(
(-1)^{r+t-1} \delta_{r-2,t-1} +
(-1)^{r-t} \delta_{r-2,t-2}
\right)
h_{\ell-r}\,
e_{t}
\qquad \qquad
\end{equation}
\begin{equation}
\qquad \qquad
\qquad \qquad
= 
\sum_{r=0}^{\ell} 
(-1)^r
h_{\ell-r}\,
e_{r-1}
+
\sum_{r=0}^{\ell} 
(-1)^r
h_{\ell-r}\,
e_{r}
\end{equation}
which vanishes by Newton's identity.
\end{proof}

\begin{theorem}
\label{ekpieri}
For any $k$-bounded partition $\lambda$ and integer $r\leq k$, 
\begin{equation}
\label{epieri}
g_{1^r}^{(k)}\,g_\lambda^{(k)} = 
\sum_{(\mu,\rho)\in \mathcal E_{\lambda,r}^{k}} 
(-1)^{|\lambda|+r-|\mu|}
g_{\mu}^{(k)} \,,
\end{equation}
where $(\mu,\rho)\in\mathcal E_{\lambda,r}^{k}$ iff
$(\mu^{\omega_k},\rho') \in \mathcal H^k_{\lambda^{\omega_k},r}$.
\end{theorem}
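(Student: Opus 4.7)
The proof will hinge on the ``K-theoretic Newton identity'' (Proposition~\ref{innewton}), which was proven immediately before the theorem and which relates $g_{1^r}$ (inside column) to products of $h_{\ell-r}=g_{\ell-r}$ (pure row) with smaller columns $g_{1^{r-j}}$. My plan is to multiply this identity by $g_\lambda^{(k)}$, use the already-established row Pieri rule (Theorem~\ref{kpieri}) to handle all the $g_{\ell-r}^{(k)}\,g_\lambda^{(k)}$ products, and then solve inductively for $g_{1^r}^{(k)}\,g_\lambda^{(k)}$.

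More concretely, because $g_s^{(k)}=h_s$ and $g_{1^s}^{(k)}=g_{1^s}$ whenever $s\leq k$ (by Property~\ref{kinfty} and the definitions), Proposition~\ref{innewton} gives, after multiplication by $g_\lambda^{(k)}$, a relation of the form
\[
g_{1^r}^{(k)}g_\lambda^{(k)}\;=\;\sum_{s<r} c_{r,s}\,g_{1^{s}}^{(k)}g_\lambda^{(k)}\;+\;\sum_{s\leq r}\tilde c_{r,s}\,g_{s}^{(k)}g_\lambda^{(k)},
\]
with explicit binomial coefficients $c_{r,s},\tilde c_{r,s}$. By induction on $r$, the first sum can be assumed expanded in $k$-$K$-Schur functions with coefficients indexed by elements of $\mathcal E_{\lambda,s}^{k}$; the second sum is handled by Theorem~\ref{kpieri} and gives coefficients indexed by $\mathcal H_{\lambda,s}^{k}$. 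It then suffices to verify that when these two contributions are combined using the exact weights from Proposition~\ref{innewton}, the resulting net coefficient of each $g_\mu^{(k)}$ is $(-1)^{|\lambda|+r-|\mu|}$ times the number of pairs $(\mu,\rho)$ with $(\mu^{\omega_k},\rho')\in \mathcal H^k_{\lambda^{\omega_k},r}$.

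Equivalently, and perhaps more conceptually, one can argue via the involution $\Omega$ of equation~\eqref{invoO}: the column Pieri rule is precisely the image of the row Pieri rule under $\Omega$, once one checks that $\Omega$ is an algebra homomorphism sending $g_r^{(k)}=h_r$ to $g_{1^r}^{(k)}$ (which follows from $(r)^{\omega_k}=(1^r)$ and \eqref{invoO}) and that the $\omega_k$-conjugation of an affine horizontal strip, with $\rho\mapsto\rho'$, matches the defining bijection $\mathcal H^k_{\lambda^{\omega_k},r}\leftrightarrow\mathcal E^k_{\lambda,r}$. Since $\mathcal E^k_{\lambda,r}$ is literally defined in terms of $\mathcal H^k_{\lambda^{\omega_k},r}$, the combinatorics on the right-hand side is already packaged to make this identification tautological.

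The main obstacle in the Newton-identity route is the bookkeeping: one must show that the alternating binomial weights from Proposition~\ref{innewton}, convolved with the row Pieri data coming from $\mathcal H$, collapse to the signed indicator of $\mathcal E$. I would carry out the induction by tracking what happens at a single target $\mu$ and verifying the ``conjugation swap'' $\mathcal H\!\leftrightarrow\!\mathcal E$ on the level of strip data. The main obstacle in the involution route, which is cleaner, is justifying that $\Omega$ acts on $k$-$K$-Schur functions as in \eqref{invoO}; this is an independent result that presumably is established elsewhere in the paper, and once available the column Pieri rule follows in a few lines by applying $\Omega$ to both sides of Theorem~\ref{kpieri} indexed by $\lambda^{\omega_k}$.
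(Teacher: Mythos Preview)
Your first route---Newton identity (Proposition~\ref{innewton}) plus induction on $r$ plus the row Pieri rule---is exactly the skeleton of the paper's proof. However, you significantly understate the ``bookkeeping'' obstacle. After writing down the induction hypothesis and the row Pieri expansion, the paper arrives at an alternating sum over what it calls \emph{vh-fillings} (pairs consisting of an $\mathcal E$-type strip of size $s$ followed by an $\mathcal H$-type strip of size $r-s-j$), weighted by signed binomials $(-1)^{r-s-j}\binom{s+j-2}{j}$. Showing this sum collapses to the single $\mathcal E^k_{\lambda,r}$ term is the entire content of the proof: the paper constructs an explicit sign-reversing involution $\mathfrak m$ on a multiset built from these fillings, and needs two technical lemmas (about ``free'' entries in a vh-filling and residue constraints forced by Property~\ref{thecoreprop}) to verify $\mathfrak m$ is well-defined and an involution. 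Your sentence ``tracking what happens at a single target $\mu$ and verifying the conjugation swap'' does not supply this mechanism, and nothing in the affine-strip definitions makes the cancellation automatic.

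Your second route is circular in this paper. The identity $\Omega g_\lambda^{(k)}=g_{\lambda^{\omega_k}}^{(k)}$ (Theorem~\ref{coolinvo}) is \emph{deduced from} the column Pieri rule, not the other way around: its proof applies Theorem~\ref{ekpieri} to compute $h_\ell\cdot\Omega(g_{\lambda^{\omega_k}}^{(k)})$ and then matches the resulting recursion against Definition~\ref{kgrotdef}. So you cannot invoke \eqref{invoO} here as an ``independent result established elsewhere.'' If you could prove $\Omega g_\lambda^{(k)}=g_{\lambda^{\omega_k}}^{(k)}$ directly (say, by showing the $\mathcal K^{(k)}$ matrix has the right conjugation symmetry), that would indeed give a shorter proof of the column Pieri rule---but that is a different and nontrivial project.
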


\begin{example}
$$
g_{1,1}^{(3)}\,g_{3,2,1}^{(3)} 
=
g^{(3)}_{3, 2, 1,1, 1}+
g^{(3)}_{3, 2, 2, 1}-
g^{(3)}_{3, 2, 1, 1}-
g^{(3)}_{3, 2, 2}+
g^{(3)}_{3, 2, 1}
$$
\end{example}

\begin{proof}
Since $g_1=h_1$,
Theorem~\ref{kpieri} implies the case when $r=1$ and
we assume by induction that the action of $g_{1^{s}}$ 
for all $s<r$ is given by \eqref{epieri}.   To prove our 
assertion for multiplication by $g_{1^r}$, 
since Proposition~\ref{innewton} can be rewritten as
\begin{equation}
\sum_{s=0}^{r-1} \sum_{j=0}^{r-s}
(-1)^{s} \binom{s+j-2}{j} g_{r-s-j}\,
g_{1^{s}}
+(-1)^r g_{1^r} =0
\,,
\end{equation}
it suffices to show
\begin{equation}
\label{sumzero}
\sum_{s=0}^{r-1} \sum_{j=0}^{r-s}
(-1)^{s} \binom{s+j-2}{j} g_{r-s-j}\,
g_{1^{s}} \,g_\lambda^{(k)} 
\; +\;
\!\! \!\!
\sum_{(\mu^{\omega_k},\rho)\in \mathcal H_{\lambda^{\omega_k},r}^{(k)}} 
(-1)^{|\mu|-|\lambda|}\,
g_{\mu}^{(k)}
=0 \,.
\end{equation}
We claim that the coefficient of $g_\nu^{(k)}$ in the left 
side of this expression is zero
for any $\nu\in\mathcal P^k$. 

\smallskip

By induction, for $s<r$, the coefficient of 
$g_{\nu}^{(k)}$ in $g_{r-s-j}g_{1^{s}} 
g_\lambda^{(k)}$ is $(-1)^{|\nu|-|\lambda|-r+j}$ times 
the number of {\it vh-fillings} with weight $(s,r-s-j)$ 
defined by:

\smallskip

\begin{enumerate}
\item[(i)] 
letter $x$ lies in cells of $\core(\mu)/\rho$ where
$(\mu^{\omega_k},\rho')\in\mathcal H^k_{\lambda^{\omega_k},s}$

\smallskip

\item[(ii)]  
letter $y$ lies in cells of $\core(\nu)/\tau$ where
$(\nu,\tau)\in\mathcal H^k_{\mu,r-s-j}$
\end{enumerate}

\noindent
Denote the set of such fillings by 
$\mathcal {VH}^k_{s,r-s-j}(\nu,\lambda)$.  
This given, for fixed $\nu,\lambda\in\mathcal P^k$,
\eqref{sumzero} follows 
by proving the identity
\begin{equation}
\label{epieri2}
\sum_{s=0}^{r} \sum_{j=0}^{r-s}
(-1)^{r-s-j}
\binom{s+j-2}{j} 
|\mathcal {VH}^k_{s,r-s-j}(\nu,\lambda)|
=0 \,.
\end{equation}
We take a combinatorial approach.
First we rewrite this expression as a single sum by
introducing ordered multisets of signed $vh$-fillings to 
account for the binomial numbers and then describe a
sign-reversing involution to achieve cancellation.
The desired involution will act by permuting certain 
\dit{free} entries of $T\in \mathcal {VH}^{k}_{(a,b)}(\nu,\lambda)$,
defined:
\begin{itemize}
\item $\{x(i),y(i)\}$ is free if every
$x(i)$ and $y(i)$ in $T$ share a cell
\item $\{x(i)\}$ is free if
every $x(i)\in T$ occurs alone and at the top of its column
\item $\{y(i)\}$ is free if every $y(i)\in T$ occurs alone 
and is not right-adj to $x$ or $y$.
\end{itemize}

Let $a=\binom{s+j-2}{j}$,
$b=\binom{s+j-3}{j}$, and  $c=\binom{s+j-3}{j-1}$.
For $T\in \mathcal {VH}^k_{s,r-s-j}(\nu,\lambda)$, let
$\mathcal S_{T}$ be the multiset containing $|a|$ copies of 
$(\text{sign(a)},T)$ if the lowest free entry is not $\{x\}$.  
Otherwise, let $\mathcal S_{T}$ be the ordered multiset with
$|b|$ copies of $(\text{sign(b)},T)$ followed by $|c|$ copies of 
$(\text{sign(c)},T)$.
Eq.~\eqref{epieri2} is then
\begin{equation}
\sum_{s=0}^{r} \sum_{j=0}^{r-s}
(-1)^{r-s-j}
\sum_{T\in \mathcal {VH}^k_{s,r-s-j}(\nu,\lambda)}
\;
\sum_{(\sigma,T)\in \mathcal S_{T}}
\sigma
=0 \,.
\end{equation}
If $\mathcal T^k_r{(\nu,\lambda)}$ is the
union of multisets $\mathcal S_{T}$, for all
$T\in\mathcal {VH}^k_{s,r-j-s}(\nu,\lambda)$ 
where $j,s\geq 0$ and $0\leq j+s \leq r$,
then the above expression reduces to 
\begin{equation}
\sum_{(\sigma,T)\in\mathcal T^k_r(\nu,\lambda)}
(-1)^{\weight_y(T)}
\times
\sigma
=0\,,
\end{equation}
where $\weight_y(T)$ is the number of residues occupied
by $y$'s.  Our result will follow from Property~\ref{minvo} which 
gives an involution $\mathfrak m$ 
on $\mathcal T^{k}_{r}(\nu,\lambda)$ where
$\mathfrak m(\sigma,T)=(\sigma,\hat T)$ with the
property that $\weight_y(T)=\weight_y(\hat T)\pm 1$.
\end{proof}

\begin{definition}
Define the map
$$
\mathfrak m: \mathcal T^k_{r}(\nu,\lambda)\to 
\mathcal T^k_{r}(\nu,\lambda)
$$
on $(\sigma,T)$ in position $p$ of $\mathcal S_T$ as follows:
let $s,j,m,i$ be integers where
$\weight(T)=(s,r-s-j)$  and the
lowest free entry of $T$ has residue $i$ and lies
in row $m$.
Then
$\mathfrak m(\sigma, T)=(\sigma,\hat T)$
is in position $\hat p$ where
\begin{enumerate}
\item 
if row $m$ contains a free $\{y(i)\}$, then
$\hat p=p$ and 
$\hat T$ is obtained by 
replacing all $\{y(i)\}$ in $T$ by
$\{x(i)\}$ 
\item
if row $m$ contains a free $\{x(i),y(i)\}$,
then 
$\hat p=|\binom{s+j-2}{j+1}|+p$
and $\hat T$ is obtained by 
replacing all
$\{x(i),y(i)\}$ in $T$ by $\{x(i)\}$ 
\item 
if row $m$ contains a free $\{x(i)\}$, then
\begin{enumerate}
\item
if $p\leq |\binom{s+j-3}{j}|$ 
then $\hat p=p$ and $\hat T$ is obtained by
replacing all $\{x(i)\}$ in $T$ by $\{y(i)\}$ 
\item otherwise $\hat p=p-|\binom{s+j-3}{j}|$ 
and $\hat T$ is obtained by replacing all $\{x(i)\}$ 
in $T$ by $\{x(i),y(i)\}$ 
\end{enumerate}
\end{enumerate}
\end{definition}

Let us emphasize that a $vh$-filling 
$T\in \mathcal {VH}^k_{(a,b)}(\nu,\lambda)$
is constructed as follows:
take the transpose of the tableau obtained by putting
$x$'s in an affine s-v $a$-strip added to $\core(\lambda)'$.
To the resulting tableau, add an affine s-v $b$-strip 
filled with $y$'s.  

\begin{lemma}
\label{free}
Given a $vh$-filling $T$,
if there is an $\{x(i),y(i)\}\in T$ that is not free then
there must be an $x(i-1)$ or a $y(i-1)$ in $T$.
\end{lemma}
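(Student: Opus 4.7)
\medskip
\noindent\emph{Proof plan.} My plan is to unpack the hypothesis, split on whether the orphaned letter at $c_2$ is $x(i)$ or $y(i)$, and in each branch invoke Corollary~\ref{distinctrescor1} and the axiom (asv3) for the appropriate strip to either force a contradiction or produce a cell of residue $i-1$. From $\{x(i),y(i)\}\in T$ I extract a cell $c_1$ lying in both $\core(\mu)/\rho$ (the $x$-strip, with $\rho\subseteq\core(\lambda)$) and $\core(\nu)/\tau$ (the $y$-strip, with $\tau\subseteq\core(\mu)$). Thus $c_1$ is a $\core(\mu)$-removable $i$-corner in $\core(\mu)/\tau$, and non-freeness then supplies a cell $c_2\ne c_1$ of residue $i$ carrying exactly one of $x(i),y(i)$.

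The $y(i)$-alone branch I expect to be vacuous. If $c_2\in\core(\nu)/\core(\mu)$ then $i$ appears both in $Res(\core(\nu)/\core(\mu))$ and in $Res(\core(\mu)/\tau)$, contradicting Corollary~\ref{distinctrescor1} for the $y$-strip. Otherwise $c_2\in\rho\cap(\core(\mu)/\tau)\subseteq\core(\lambda)$ is simultaneously $\core(\mu)$-removable and $\core(\lambda)$-removable. Applying (asv3) to the transposed $x$-strip (or Property~\ref{distinctres} in the subcase where $c_1$ is a new cell) to the residue-$i$ pair $(c_1,c_2)$ forces the transpose cell $c_2'$ to be $\core(\mu)'$-blocked, i.e.\ the cell directly right of $c_2$ in the original lies in $\core(\mu)$, contradicting the $\core(\mu)$-removability of $c_2$.

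In the $x(i)$-alone branch, $c_2\in\tau\cap(\core(\mu)/\rho)$. Corollary~\ref{distinctrescor1} for the $x$-strip separates two sub-cases. If $c_1,c_2\in\core(\lambda)/\rho$, the same (asv3) argument as above forces a new $x$-cell immediately to the right of $c_2$ in the same row, violating the vertical-strip structure of $\core(\mu)/\rho$. So $c_1,c_2\in\core(\mu)/\core(\lambda)$ are both new $x$-cells; in particular $c_2$ lies at the end of its row in $\core(\mu)$. If $c_2$ is $\core(\mu)$-removable, (asv3) for the $y$-strip applied to $c_1\in\core(\mu)/\tau$ and $c_2\in\tau$ forces $c_2$ to be $\core(\nu)$-blocked, so the cell directly above $c_2$ lies in $\core(\nu)/\core(\mu)$ and carries $y(i-1)$. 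If not, $c_2$ has a cell of $\core(\mu)$ directly above it, and since $\core(\lambda)_{r_2+1}\le\core(\lambda)_{r_2}<c_{o_2}$ this cell must lie in $\core(\mu)\setminus\core(\lambda)$, making it a new $x$-cell that carries $x(i-1)$.

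The main obstacle I anticipate is the bookkeeping between the transposed picture, where the $x$-strip is horizontal and the strip axioms were originally stated, and the ambient picture, where the $y$-strip lives and ``above/right'' retain their usual meaning; in particular, translating $\core(\mu)'$-blockedness back to a statement about adjacency in the original. The vertical-strip structure of $\core(\mu)/\rho$ is the decisive ingredient that eliminates the otherwise-stubborn ``$c_1,c_2\in\core(\lambda)/\rho$'' configuration in both branches, after which each surviving subcase collapses to a short application of one of the two strip axioms.
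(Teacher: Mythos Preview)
Your plan has a genuine gap in the $x(i)$-alone branch, specifically in the sub-case $c_1,c_2\in\core(\lambda)/\rho$. You invoke ``the same (asv3) argument as above'' to force a new $x$-cell immediately to the right of $c_2$, but that argument does not transfer. In the $y$-alone branch, the mechanism was that $c_2'\notin\core(\lambda)'/\rho'$ while $c_1'\in\core(\lambda)'/\rho'$, so (asv3) for the transposed $x$-strip forced $c_2'$ to be $\core(\mu)'$-blocked. Here, however, both $c_1'$ and $c_2'$ lie \emph{in} $\core(\lambda)'/\rho'$, so (asv3) imposes nothing on $c_2'$. In fact, horizontality of $\core(\mu)'/\rho'$ (axiom (asv1)) shows directly that the cell above $c_2'$ is not in $\core(\mu)'$, i.e.\ the cell to the right of $c_2$ is not in $\core(\mu)$ --- exactly the opposite of what you want. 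So this sub-case is not vacuous, and your proof does not cover it.

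The paper's argument avoids this splitting altogether and is considerably more direct. In the $x$-alone case it simply assumes $x(i-1)\notin T$; from verticality of the $x$-cells and this assumption it follows that the lone-$x$ cell $c'$ is $\core(\mu)$-removable (regardless of whether $c'$ lies in $\core(\lambda)/\rho$ or in $\core(\mu)/\core(\lambda)$). Then, since $c\in\core(\mu)/\tau$ has residue $i$ and $c'\in\tau$, axiom (asv3) for the $y$-strip forces $c'$ to be $\core(\nu)$-blocked, and the cell above $c'$ carries $y(i-1)$. This single step handles both of your sub-cases at once. Your treatment of the $c_1,c_2\in\core(\mu)/\core(\lambda)$ sub-case is essentially this argument, and it works equally well when $c_1,c_2\in\core(\lambda)/\rho$ --- so the fix is to drop the attempted contradiction and apply the $y$-strip (asv3) argument there too.

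As a side remark, your $y$-alone branch is in fact more thorough than the paper's: the paper asserts without comment that a lone-$y$ cell lies outside $\core(\mu)$, whereas you correctly split off and dispose of the possibility that it lies in $\rho\cap(\core(\mu)\setminus\tau)$.
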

\begin{proof}
Consider $T\in \mathcal {VH}^k_{(a,b)}(\nu,\lambda)$ with
a cell $c(i)$ containing a non-free $\{x,y\}$.
Given $x$ lies in cells of $\core(\mu)/\rho$ where
$(\mu^{\omega_k},\rho')\in\mathcal H^k_{\lambda^{\omega_k},a}$,
we have that
$\beta=\core(\mu)$ is the shape obtained by deleting all
lonely $y$'s from $T$.
Thus $c$ is $\beta$-removable since
no $x$ lies above a $y$ and the $x$'s form a vertical strip.

If $c'(i)$ contains a $\{y\}$ then
$c'\not\in\beta$ is above a cell in $\beta$ 
since the $y$'s form a horizontal strip.
Therefore the cell left-adj to $c'$ (of residue $i-1$) 
contains a $y$ since $\beta$ cannot have an addable and removable
$i$-corner.
On the other hand, if $c'(i)$ contains an $\{x\}$, assume 
$x(i-1)\not\in T$.  Then $c'$ is at the top of
its column in $\beta$ and is thus $\beta$-removable.
Further, $c\in \beta/\rho$ where $\rho$ is the shape obtained 
by deleting from $T$ any cell containing a $y$. Thus, all non-blocked
$\beta$-removable $i$-corners are in $\beta/\rho$ implying
that $c'$ is blocked (by a $y(i-1)$).
\end{proof}

\begin{property}
$\mathfrak m$ is well-defined
\end{property}
\begin{proof}
Consider $(\sigma,T)\in\mathcal T_r^k(\nu,\lambda)$.
By definition of free, no row of $T$ contains more than one free 
entry since $x$'s form a vertical strip in $vh$-fillings.
It thus suffices to show that $T$ contains 
a free $\{x\}$, a free $\{y\}$, or a free $\{x,y\}$.  
Suppose no entries are free.
$T$ contains an arbitrary letter $x(i)$ or $y(i)$ and
thus an entry $\{x(i)\}$, $\{y(i)\}$ or $\{x(i),y(i)\}$.
If $\{x(i)\}$ or $\{y(i)\}$ is in $T$, then it is not free
implies there is an $\{x(i),y(i)\}$, an $x(i-1)$, or a $y(i-1)$ 
in $T$.   On the other hand, if there is an
$\{x(i),y(i)\}\in T$, then Lemma~\ref{free} 
implies there is an $x(i-1)$ or a $y(i-1)$ in $T$.  
Therefore, there is an $x(i-1)$ or a $y(i-1)$ in $T$.
From this, the same argument implies there must 
be an $x(i-2)$ or a $y(i-2)$ in $T$.
By iteration, $T$ 
contains the letters $z(i) , z(i-1), z(i-2),\dots,z(i+2),z(i+1)$,
where each $z(t)$ is $x(t)$ or $y(t)$.
This contradicts that $T$ has weight $(s,r-j-s)$ for $r-j\leq k$.
\end{proof}

\begin{lemma}
\label{freenotin}
Given $T\in \mathcal T^k_r(\nu,\lambda)$ 
\begin{enumerate}
\item if $\{x(i)\}$ is the lowest free entry in $T$
then  $y(i)\not\in T$
\item if $\{y(i)\}$ is the lowest free entry in $T$
then  $x(i)\not\in T$
\end{enumerate}
\end{lemma}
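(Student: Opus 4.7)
The plan is to prove both parts by contradiction, mirroring the iterative cascade argument used in the preceding well-definedness proof for $\mathfrak m$. For part (1), suppose $\{x(i)\}$ is the lowest free entry of $T$ (in row $m$) and, for contradiction, that $y(i)\in T$. Since $\{x(i)\}$ is free, every $x(i)$ lies alone at the top of its column in $\core(\nu)$, so every $y(i)\in T$ sits in a standalone $\{y(i)\}$-cell. The goal is to show that this configuration forces $T$ to contain letters of residue $i-1$; then, iterating the same implication, $T$ contains letters of all residues $i, i-1, \ldots, i+1$ modulo $k+1$. Since $T$ has weight $(s, r-s-j)$ with $r\le k$, fewer than $k+1$ residues can be occupied — contradiction.

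The key intermediate claim I would establish is: if $\{x(i)\}$ is free and $y(i)\in T$, then some letter of residue $i-1$ appears in $T$. To prove this, I split on whether the lowest $\{y(i)\}$-cell is a new cell in $\core(\nu)\setminus\core(\mu)$ or lies on a $\core(\mu)$-removable corner. In the first subcase, $\core(\mu)$ has an addable $i$-corner and therefore no removable $i$-corner. Combined with the fact that $\{x(i)\}$ lies at the top of its column in $\core(\nu)$ (and hence in $\core(\mu)$), Property~\ref{thecoreprop} forces $(m, c_x+1)\in\core(\mu)$ and, by the vertical-strip structure of the $x$'s and the horizontal-strip structure of the $y$'s, this cell of residue $i+1$ must carry an $x$ or a $y$; following the forced entries around the cyclic residue pattern then produces a letter of residue $i-1$. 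The second subcase (old $\{y(i)\}$ on a $\core(\mu)$-removable corner) is handled symmetrically by applying Lemma~\ref{free} to the neighboring occupied cell, which yields the desired residue-$(i-1)$ entry directly.

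Once the claim is established, iterating it (with the previously-derived residue-$(i-1)$ entry playing the role of $y(i)$, and so on cyclically) produces letters of every residue modulo $k+1$ in $T$, contradicting the weight bound $r\le k$ as in the well-definedness proof. Part (2) then follows by the dual argument: the roles of $x$ and $y$ are exchanged, ``at the top of its column'' is replaced by ``at the end of its row'' (reflecting the vertical/horizontal exchange), and the Property~\ref{thecoreprop} analysis is applied in its other direction, with the cascade propagating to residue $i+1, i+2, \ldots$ instead of $i-1, i-2, \ldots$.

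The main obstacle is the first step of the cascade — verifying rigorously that a free $\{x(i)\}$ together with a standalone $\{y(i)\}$ forces a residue-$(i-1)$ occurrence. The subtlety is that both sidedness conditions (top-of-column and end-of-row) must be coordinated simultaneously with the fact that a core admits addable or removable corners of a given residue, never both, and that this non-coexistence has to be played off against both the vertical-strip condition on $x$'s and the horizontal-strip condition on $y$'s; an additional care point is that the ``lowest free'' hypothesis on $\{x(i)\}$ is used to rule out the possibility that $\{y(i)\}$ itself is free (which would otherwise be a harmless second free entry rather than a contradiction).
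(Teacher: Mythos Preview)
Your cascade strategy has a genuine gap: the iteration does not close. Your ``key intermediate claim'' is that a free $\{x(i)\}$ together with $y(i)\in T$ forces a letter of residue $i-1$, and you then propose to iterate. But to iterate you would again need a free entry of residue $i-1$ coexisting with a second letter of the same residue, and nothing you have derived provides that. All the well-definedness cascade gives is ``a non-free entry of residue $j$ produces an entry of residue $j-1$''; the chain may terminate at a free entry sitting in a row \emph{above} row $m$, which does not contradict ``$\{x(i)\}$ is the lowest free entry''. Your aside that the ``lowest free'' hypothesis rules out $\{y(i)\}$ being free is also incorrect: lowest only pins down the row, not the residue, so a free $\{y(i)\}$ in a higher row is not excluded on those grounds. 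Finally, your proposed use of Lemma~\ref{free} in the second subcase does not apply, since that lemma concerns a non-free $\{x(i),y(i)\}$, whereas here the $y(i)$-cell is a lonely $\{y(i)\}$.

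In fact your own case split already contains the real proof, if you push it one step further. A free $\{x(i)\}$ lies at the top of its column in $\core(\nu)$ and, because the $x$'s form a vertical strip, at the end of its row in $\core(\mu)$; hence it is a $\core(\mu)$-removable $i$-corner that is not $\core(\nu)$-blocked. Now your first subcase (a $y(i)$ in $\core(\nu)/\core(\mu)$) is killed immediately by Property~\ref{distinctres}, and your second subcase (a $y(i)$ in $\core(\mu)/\tau$) is killed by condition (asv3): it forces the $x(i)$-cell itself to carry a $y$, contradicting freeness. That is exactly the paper's argument for (1) --- a direct two-line contradiction, no cascade needed. For (2) the paper does not argue dually as you suggest; instead it first uses Property~\ref{thecoreprop} to locate the lonely $x(i)$ strictly above the free $\{y(i)\}$, then strips off the lonely $y$'s and the higher lonely $x$'s (via Proposition~\ref{iteratesvstrip}) to obtain an intermediate core $\beta$ in which $c_x(i)$ is at the top of its column while the cell left-adjacent to $c_y$ sits above an $i$-extremal --- a direct violation of Property~\ref{thecoreprop}.
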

\begin{proof}
Given $T\in \mathcal {VH}^{k}_{(a,b)}(\nu,\lambda)$,
$x$ lies in cells of $\core(\mu)/\rho$ where
$(\mu^{\omega_k},\rho')\in\mathcal H^k_{\lambda^{\omega_k},a}$
and letter $y$ lies in cells of $\core(\nu)/\tau$ where
$(\nu,\tau)\in\mathcal H^k_{\mu,b}$.  

(1): 
Any free $\{x(i)\}$ is a removable corner of $\core(\mu)$
since it lies at the top of its column and the $x$'s form a 
vertical strip.  Therefore, by Property~\ref{distinctres}, there 
can be no $y(i)$ in the affine strip $\core(\nu)/\core(\mu)$.
Suppose $y(i)\in \core(\mu)$. Then there is a $y$ in all 
removable $i$-corners of $\core(\mu)$ that are not $\core(\nu)$-blocked.   
Since the free $\{x(i)\}$ is not blocked, it shares
a cell with $y(i)$, violating the definition of free.

(2):
Given the lowest free entry is a $\{y(i)\}$ in cell $c_y$ of row $m$,
suppose there is an $x(i)\in T$ in cell $c_x$ of row $m_x$.
Since $\{y(i)\}$ is free, $c_x$ contains a {\it lonely}
$\{x(i)\}$.
In $\core(\mu)$, cell $c_x$ lies above an $i+1$-extremal by 
verticality of $x$'s and the cell beneath $c_y$ is at the top 
of its column by horizontality of $y$'s.  Therefore, by 
Property~\ref{thecoreprop}, $m_x>m$.  
Let $\beta$ be the shape obtained by
deleting from $T$ all lonely $y$'s {\it and} all lonely $x(j)$'s
for any residue $j$ that lies higher than the highest $x(i)$ 
and is occupied by an $x$.
Proposition~\ref{iteratesvstrip} implies that $\beta$ is a core
and $c_x(i)$ lies at the top of its column.  However, the $\{y\}$ in 
cell $c_y$ is free and thus is right-adj to a cell in $\beta$ that
lies above an $i$-extremal. We reach a contradiction by 
Property~\ref{thecoreprop}.  
\end{proof}

\begin{property}
\label{minvo}
The map $\mathfrak m$ is an involution on 
$\mathcal T_{r}^{k}(\nu,\lambda)$ and
for $\mathfrak m(\sigma,T)=(\sigma,\hat T)$,
$\weight_y(T)=\weight_y(\hat T)\pm 1$ 
\end{property}

\begin{proof}
Given $(\sigma,T)\in \mathcal T_{r}^{k}(\nu,\lambda)$,
define $p,r,s$ so that $(\sigma,T)$ is in 
position $p$ of $\mathcal S_T$ and $\weight(T)=(s,r-j-s)$.
We will show that 
$\mathfrak m(\sigma,T)=(\sigma,T_1)\in\mathcal T^k_r(\nu,\lambda)$,
$\weight_y(T_1)=r-j-s\pm 1$, and $\mathfrak m^2=1$.
Let $m$ denote the lowest row with a free entry in $T$
and set $a=\binom{s+j-2}{j}, b=\binom{s+j-3}{j}$, and
$c=\binom{s+j-3}{j-1}$.  

\smallskip

Consider the case when row $m$ of $T$  has a free $\{y(i)\}$.
Then $p\leq |\mathcal S_T|=|a|$  
and $\sigma=\text{sign(a)}$.  In this case, $T_1$ is obtained 
by replacing each $\{y(i)\}$ in $T$ by $\{x(i)\}$.  Any $y(i)\in T$ 
is lonely by definition of free and $x(i)\not\in T$ by 
Lemma~\ref{freenotin}.  Therefore, any $x(i)$ in $T_1$ is lonely 
and lies at the top of its column by horizontality of $y$'s. 
From this, the lowest free entry in $T_1$ is  the $\{x(i)\}$ in 
row $m$ and the weight of $T_1$ is $(s+1,r-j-s-1)$.
Thus $\mathcal S_{T_1}$ has $(\text{sign($a$)},T_1)$
in its first $|a|$ positions by definition
of $\mathcal S_{T_1}$.  In particular, there is a $(\sigma,T_1)$ 
in position $\hat p=p\leq |a|$ of $\mathcal S_{T_1}$.
Moreover, since the lowest free in $T_1$ is an $\{x(i)\}$
and $y(i)\not\in T_1$, $\mathfrak m$ acts on $(\sigma,T_1)$
by replacing each $\{x(i)\}$ by $\{y(i)\}$ and the
$(\sigma,T)$ in position $p$ is recovered.

\smallskip

In the case that row  $m$  of $T$ contains a free $\{x(i),y(i)\}$, 
we again have $\sigma=\text{sign(a)}$ and $p\leq |a|$.
$T_1$ is obtained by replacing each $\{x(i),y(i)\}$ in $T$ by $\{x(i)\}$.
The definition of free implies there are no lonely
$x(i)$ or $y(i)$ in $T$ and therefore
$T_1$ has weight $(s,r-j-s-1)$.
Further, the lowest free entry in $T_1$
is an $\{x(i)\}$ in row $m$ 
since each $\{x(i),y(i)\}$ in $T$ lies at the top of its
column by horizontality of $y$'s and is sent to a
lonely $x(i)$ in $T_1$.  This given, there are
$|\binom{s+j-2}{j+1}|+|a|$
elements of $\mathcal S_{T_1}$ of which the last
$|a|$ entries are $(\text{sign($a$)},T_1)$.
Thus, $p\leq |a|$ implies $(\text{sign($a$)},T_1)$ 
is in position $\hat p=p+|\binom{s+j-2}{j+1}|$.
Further, $\mathfrak m$ acts by replacing each $\{x(i)\}$ 
in $T_1$ by $\{x(i),y(i)\}$ and we have $\mathfrak m^2=id$.

\smallskip

The last case is when there is a free $\{x(i)\}$ in row $m$.
There are two scenarios depending on $p$.
When $p\leq |b|$, $\sigma=\text{sign(b)}$ and
$T_1$ is obtained by replacing $\{x(i)\}$ with $\{y(i)\}$ in $T$.
Since $y(i)\not\in T$ by Lemma~\ref{freenotin},
the weight of $T_1$ is $(s-1,r-j-s+1)$.
Further, the lowest free in $T_1$ is $\{y(i)\}$ in row $m$
since there is at most one $x$ in each row of $T$ implies
that no $x$ or $y$ is left-adj to $\{y(i)\}$ in $T_1$.
Thus, the $|b|$ entries of $\mathcal S_{T_1}$ are
$(\text{sign($b$)},T_1)$.  Therefore, there is a
$(\sigma,T_1)$ in position $\hat p =p$ of $\mathcal S_{T_1}$.
When $\mathfrak m$ acts on $(\sigma,T_1)$,
each $\{y(i)\}$ in $T_1$ is replaced by $\{x(i)\}$
and we recover $(\sigma,T)$ in position $p$.

Otherwise, $|b|+1\leq p\leq |b|+|c|$ and $\sigma=\text{sign(c)}$.
$T_1$ is obtained by replacing $\{x(i)\}$
by $\{x(i),y(i)\}$.  Since $y(i)\not\in T$ by Lemma~\ref{freenotin},
the weight of $T_1$ is $(s,r-j-s+1)$ and every $x(i)$ lies with $y(i)$ 
and vice versa.  Thus the lowest free entry is an $\{x(i),y(i)\}$
in row $m$ implying that $\mathcal S_{T_1}$ is $|c|$ copies of
$(\text{sign($c$)},T_1)$.  Therefore, 
there is a $(\sigma,T_1)$ in
position $p-|b|\leq |c|$ of $\mathcal S_{T_1}$.
When $\mathfrak m$ acts on $(\sigma,T_1)$,
each $\{x(i),y(i)\}$ in $T_1$ is replaced by $\{x(i)\}$
and we recover $(\sigma,T)$ in position $p$.
\end{proof}

\section{Conjugating affine Grothendieck polynomials}

An important property in the theory of Schur functions 
and $k$-Schur functions involves the algebra automorphism
defined on $\Lambda$ by $\omega: h_\ell\to e_\ell$.
Not only does $\omega$ send $s_\lambda$ to the
single Schur function  $s_{\lambda'}$, 
but it was proven in \cite{[LMproofs]} that
\begin{equation}
\omega\left(s_\lambda^{(k)}\right) = s^{(k)}_{\lambda^{\omega_k}}
\,,
\end{equation}
where $\lambda^{\omega_k} = \kbnd(\core(\lambda)')$.

\smallskip

In our study, we consider the algebra endomorphism defined on $\Lambda$
by
\begin{equation}
\Omega h_\ell =\sum_{j=1}^{\ell}\binom{\ell-1}{j-1}  e_{j}
\end{equation}
to be an inhomogeneous analog of $\omega$.
Note: the transformation 
$e_\ell \to \sum_{j=1}^{\ell}\binom{\ell-1}{j-1}  e_{j}$
has been studied \cite{BGI} and is needed to 
relate the cohomology ring to the Grothendieck ring.
In fact, the polynomials
$\sum_{j=1}^{\ell}\binom{\ell-1}{j-1}  e_{j}$
are connected to the study of classes of a Schubert 
subvariety of the Grassmannian in these rings \cite{LasSMF}.

\begin{remark}
A manipulatorial proof that $\Omega$ is an involution on 
$\Lambda^{(k)}$, supplied by Adriano Garsia, shows that
$$
\sum_{\ell\geq 1}\left(\frac{u}{u-1}\right)^\ell\Omega h_\ell = 
\sum_{\ell\geq 1}\left(\frac{u}{u-1}\right)^\ell
\sum_{j=1}^{\ell}\binom{\ell-1}{j-1}e_{j}
$$
$$
=
\sum_{j\geq 1}e_{j}
\sum_{\ell\geq j}
\binom{\ell-1}{j-1} 
\left(\frac{u}{u-1}\right)^{\ell-j}
\left(\frac{u}{u-1}\right)^j
=
\sum_{j\geq 1}e_{j}
\frac{\left(\frac{u}{u-1}\right)^j}
{1-\left(\frac{u}{u-1}\right)^j}
=\sum_{j\geq 1} (-1)^j e_j u^j
$$
implies by Newton's formula \eqref{newton} that
$$
\left(
\sum_{\ell\geq 0}u^\ell h_\ell\right)
\left(
\sum_{\ell\geq 0}\left(\frac{u}{u-1}\right)^\ell\Omega h_\ell \right)
= 1\,.
$$
The result follows by 
substituting $u=\frac{u}{u-1}$ into this expression and
applying $\Omega$:
$$
\left(
\sum_{\ell\geq 0}\left(\frac{u}{u-1}\right)^\ell\Omega h_\ell \right)
\left(
\sum_{\ell\geq 0}u^\ell\Omega^2 h_\ell\right)
=
1
\,.
$$

\end{remark}

\begin{remark}
By Jacobi-Trudi we have
$$\Omega(s_\lambda)=s_{\lambda'}+\text{lower degree terms}
\,.
$$
\end{remark}

\smallskip

This involution acts beautifully on the $k$-$K$-Schur functions
just as $\omega$ acts on a Schur function (or more generally, a 
$k$-Schur function).
\begin{theorem}
\label{coolinvo}
For any $k$-bounded partition $\lambda$,
\begin{equation}
\Omega g_\lambda^{(k)} = g_{\lambda^{\omega_k}}^{(k)} \, .
\end{equation}
\end{theorem}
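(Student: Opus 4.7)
The plan is to reduce the identity to two brief checks on the $\mathbb Z$-linear map $\Theta:\Lambda^{(k)}\to\Lambda^{(k)}$ defined on the $k$-$K$-Schur basis by $\Theta(g_\lambda^{(k)})=g_{\lambda^{\omega_k}}^{(k)}$. Since $\Omega h_\ell=g_{1^\ell}$ lies in $\Lambda^{(k)}$ for each $\ell\le k$ (as $e_1,\ldots,e_k$ lie in $\Lambda^{(k)}=\mathbb Z[h_1,\ldots,h_k]$ by Newton's identity), the restriction of $\Omega$ to $\Lambda^{(k)}$ is an algebra endomorphism, and because $h_1,\ldots,h_k$ generate $\Lambda^{(k)}$ as an algebra, it suffices to show that (i) $\Theta$ is itself an algebra endomorphism and (ii) $\Theta(h_\ell)=\Omega(h_\ell)$ for all $\ell\le k$.

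Verification of (ii) is immediate: combining the identity $g_\ell^{(k)}=h_\ell$ (noted in the proof of Theorem~\ref{kpieri}) with the observation that $(\ell)^{\omega_k}=1^\ell$ for $\ell\le k$ and then applying Property~\ref{kinfty}, I find $\Theta(h_\ell)=\Theta(g_\ell^{(k)})=g_{1^\ell}^{(k)}=g_{1^\ell}=\Omega(h_\ell)$. For (i), by linearity and the spanning property of monomials in the $h_r$, it is enough to establish $\Theta(h_r\,g_\mu^{(k)})=\Theta(h_r)\,\Theta(g_\mu^{(k)})$ for each $r\le k$ and each $\mu\in\mathcal P^k$; an easy induction on monomial length then extends multiplicativity to all of $\Lambda^{(k)}$. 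The row Pieri rule (Theorem~\ref{kpieri}) gives
\[
\Theta\!\left(h_r\,g_\mu^{(k)}\right)=\sum_{(\pi,\rho)\in\mathcal H_{\mu,r}^k}(-1)^{|\mu|+r-|\pi|}\,g_{\pi^{\omega_k}}^{(k)},
\]
while $\Theta(h_r)\,\Theta(g_\mu^{(k)})=g_{1^r}^{(k)}\,g_{\mu^{\omega_k}}^{(k)}$, which the column Pieri rule (Theorem~\ref{ekpieri}) expands as a signed sum over $\mathcal E_{\mu^{\omega_k},r}^k$. The defining bijection $(\sigma,\tau)\leftrightarrow(\sigma^{\omega_k},\tau')$ between $\mathcal E_{\mu^{\omega_k},r}^k$ and $\mathcal H_{(\mu^{\omega_k})^{\omega_k},r}^k=\mathcal H_{\mu,r}^k$, together with the invariances $|\mu^{\omega_k}|=|\mu|$ and $|\sigma^{\omega_k}|=|\sigma|$, converts the column Pieri expression into exactly the sum above under the substitution $\pi=\sigma^{\omega_k}$, $\rho=\tau'$.

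The main obstacle is really only careful bookkeeping: confirming that $\Omega$ preserves $\Lambda^{(k)}$, that $\omega_k$ is an involution on $\mathcal P^k$ preserving the size statistic $|\cdot|$ (because transposition of cores preserves $k$-bounded hook counts), and that the signs $(-1)^{|\mu|+r-|\pi|}$ and index sets match between the row and column Pieri expansions after conjugation. Once these points are in place, the equality $\Omega=\Theta$ on $\Lambda^{(k)}$, and hence the theorem, follows mechanically from the two Pieri rules established in the previous section.
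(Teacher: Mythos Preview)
Your proof is correct and is essentially the same argument as the paper's, just packaged in the contrapositive direction. The paper sets $F_\lambda=\Omega(g_{\lambda^{\omega_k}}^{(k)})$, uses the column Pieri rule together with the defining bijection between $\mathcal E^k_{\lambda^{\omega_k},\ell}$ and $\mathcal H^k_{\lambda,\ell}$ to show that $h_\ell F_\lambda$ obeys the \emph{row} Pieri recursion, and then concludes $F_\mu=g_\mu^{(k)}$ from the uniqueness in Definition~\ref{kgrotdef}; you instead define $\Theta(g_\lambda^{(k)})=g_{\lambda^{\omega_k}}^{(k)}$, use the identical Pieri comparison to show $\Theta$ is multiplicative, and conclude $\Theta=\Omega$ on generators. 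The core computation---matching the two Pieri expansions through the $\mathcal E/\mathcal H$ correspondence---is the same in both.
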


\begin{proof}
Let $F_\lambda= \Omega\left(g^{(k)}_{\lambda^{\omega_k}}\right)$.  
Since $h_\ell \, \Omega\left(g^{(k)}_{\lambda}\right)
=\Omega\left(
\sum_{j=1}^{\ell}
\binom{\ell-1}{j-1} e_{j} g^{(k)}_{\lambda}\right)$, 
we can apply the column Pieri rule (Theorem~\ref{ekpieri}) to obtain
\begin{equation}
h_\ell\,F_\lambda=
\Omega\left(
g_{(1^\ell)}
g^{(k)}_{\lambda^{\omega_k}}\right)
=
\sum_{(\mu,\rho)\in \mathcal E^k_{\lambda^{\omega_k},\ell}}
\!  \!  \!  \!
 (-1)^{|\mu|-|\lambda|-\ell}\,
\Omega
g^{(k)}_{\mu}
=
\sum_{(\mu,\rho)\in \mathcal E^k_{\lambda^{\omega_k},\ell}}
\!  \!  \!  \!
 (-1)^{|\mu|-|\lambda|-\ell}\,
F_{\mu^{\omega_k}} 
\end{equation}
\begin{equation}
=
\sum_{(\mu^{\omega_k},\rho)\in \mathcal E^k_{\lambda^{\omega_k},\ell}}
 (-1)^{|\mu|-|\lambda|-\ell}\,
F_{\mu} 
=
\sum_{(\mu,\rho')\in \mathcal H^k_{\lambda,\ell}}
(-1)^{|\mu|-|\lambda|-\ell}\,
F_{\mu} 
\,.
\end{equation}
By Theorem~\ref{setvalued2},
the iteration of this expression from $F_0=\Omega g_0=1$ matches
the iteration of the row Pieri rule \eqref{kpierieq} from $g_0=1$.  
Thus, $F_\mu$ satisfies 
\begin{equation}
h_\lambda = \sum_{\mu\in\mathcal P^k\atop |\mu|\leq|\lambda|}
\mathcal K_{\mu\lambda}^{(k)}\, F_\mu
\end{equation}
implying that $F_\mu=g_\mu^{(k)}$ by Definition~\ref{kgrotdef}
of the $k$-$K$-Schur functions.
\end{proof}

The result can also be translated into the language of 
affine permutations since
$\core(\lambda)'=\core(\lambda^{\omega_k})$.

\begin{corollary}
For any $w\in \tilde S_{k+1}^0$,
$$
\Omega g_w^{(k)} = g_{w'}^{(k)} 
$$
where $w'$ is obtained by replacing
$s_i$ in $w$ with $s_{k+1-i}$.
\end{corollary}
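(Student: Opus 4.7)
The plan is to reduce the corollary to Theorem~\ref{coolinvo} by translating the operation $\lambda \mapsto \lambda^{\omega_k}$ into the corresponding operation on $\tilde S_{k+1}^0$. Since $g_w^{(k)}$ denotes $g_\lambda^{(k)}$ for the unique $\lambda \in \mathcal P^k$ with $w = w_\lambda$, it suffices to prove that the substitution $s_i \mapsto s_{k+1-i}$ defines a well-defined group automorphism $\sigma$ of $\tilde S_{k+1}$ which restricts to a bijection of $\tilde S_{k+1}^0$, and that $\sigma(w_\lambda) = w_{\lambda^{\omega_k}}$.

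The first point is a verification on generators: the Coxeter relations in \eqref{coxeter} are visibly symmetric under the involution $i \mapsto k+1-i$ of $\mathbb Z/(k+1)\mathbb Z$ (with $s_0$ fixed, since $k+1 \equiv 0$), so $\sigma$ extends to a well-defined group automorphism. Since $\sigma$ preserves length (hence reduced words) and fixes $s_0$, it sends words ending in $s_0$ to words ending in $s_0$, and thus restricts to a bijection of $\tilde S_{k+1}^0$.

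The combinatorial heart of the argument is the single-step intertwining
$$
(\mathfrak s_i \gamma)' \;=\; \mathfrak s_{k+1-i}(\gamma') \qquad \text{for every core } \gamma.
$$
This follows from a direct residue computation: transposition sends the cell $(a,b)$ of residue $b-a$ to the cell $(b,a)$ of residue $a-b \equiv k+1-(b-a) \pmod{k+1}$, so the transposition bijection on addable corners carries the addable $i$-corners of $\gamma$ bijectively onto the addable $(k+1-i)$-corners of $\gamma'$. Starting from any reduced word $i_\ell \cdots i_1$ for $w_\lambda$ and iterating this intertwining, I obtain
$$
\mathfrak t(\sigma(w_\lambda)) \;=\; \mathfrak s_{k+1-i_\ell} \cdots \mathfrak s_{k+1-i_1}(\emptyset) \;=\; \bigl(\mathfrak s_{i_\ell} \cdots \mathfrak s_{i_1}(\emptyset)\bigr)' \;=\; \core(\lambda)' \;=\; \core(\lambda^{\omega_k}),
$$
which identifies $\sigma(w_\lambda) = w_{\lambda^{\omega_k}}$ via the bijection $\mathfrak t$ of Remark~\ref{addonezerosi}(2). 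Combining with Theorem~\ref{coolinvo} yields
$$
\Omega g_w^{(k)} \;=\; \Omega g_\lambda^{(k)} \;=\; g_{\lambda^{\omega_k}}^{(k)} \;=\; g_{\sigma(w)}^{(k)} \;=\; g_{w'}^{(k)},
$$
as required. The only mild obstacle is the bookkeeping in passing between the order of group multiplication and the order of the operators $\mathfrak s_i$, but this is routine once the single-step intertwining identity is in hand.
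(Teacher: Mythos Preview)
Your proof is correct and follows the same approach as the paper: the corollary is stated immediately after the one-line justification ``The result can also be translated into the language of affine permutations since $\core(\lambda)'=\core(\lambda^{\omega_k})$,'' and your argument simply fills in the details of that translation (the automorphism $\sigma$, the residue computation under transposition, and the intertwining $(\mathfrak s_i\gamma)'=\mathfrak s_{k+1-i}(\gamma')$).
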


\section{Computability}
The notion of affine s-v strips and Theorem~\ref{setvalued2}
give an efficient recursive algorithm to compute
$k$-$K$-Schur functions and affine Grothendieck polynomials.
This enabled us to check all conjectures extensively.

\bibliographystyle{halpha}

\bibliography{ktheoryjune}

\end{document}